\numberwithin{equation}{section}
\theoremstyle{plain}
\newtheorem{theorem}{Theorem}[section]
\newtheorem{lemma}{Lemma}[section]
\newtheorem{corollary}{Corollary}[section]
\newtheorem{proposition}{Proposition}[section]
\newtheorem{remark}{Remark}[section]
\def\R{{\mathbb R}}
\def\E{\mathbb E}
\def\L{\mathcal L}
\begin{document}


%
\begin{frontmatter}

  \title{High-dimensional  central limit theorems for eigenvalue  distributions of generalized Wishart
    processes}
  
  \runtitle{High-dimensional CLT for general Wishart processes}

  \begin{aug}
    \author{\fnms{~ Jian} \snm{Song}\ead[label=e1]{txjsong@hotmail.com}}
    \and
    \author{\fnms{~ Jianfeng} \snm{Yao~}\ead[label=e2]{jeffyao@hku.hk}}
    \and
    \author{\fnms{~ Wangjun} \snm{Yuan}\ead[label=e3]{ywangjun@connect.hku.hk}}
    
    \affiliation{Shandong University and  The University of Hong Kong}
    \runauthor{J. Song,   J. Yao \& W. Yuan}

    \address{School of Mathematics, Shandong University\\
      \printead{e1}}
    
    \address{ 
      Department of Statistics and Actuarial Science, 
      The University of Hong Kong\\
      \printead{e2}
    }

    \address{
      Department of Mathematics, 
      The University of Hong Kong\\
      \printead{e3}
    }
  \end{aug}

  \begin{abstract}

We consider eigenvalues of generalized Wishart processes as well as particle systems, of which the empirical measures converge to deterministic measures as the dimension goes to infinity. In this paper, we obtain central limit theorems to characterize the fluctuations of the empirical measures around the limit measures by using stochastic calculus. As applications, central limit theorems for the Dyson's Brownian motion and the eigenvalues of the Wishart process are recovered under slightly more general initial conditions, and a central limit theorem for the eigenvalues of a symmetric Ornstein-Uhlenbeck matrix process is obtained. 
    
  \end{abstract}
  
  \begin{keyword}[class=AMS]
    \kwd[Primary ]{60H15,~60F05}
  \end{keyword}

  \begin{keyword}
    \kwd{Dyson's  Brownian  motion}
    \kwd{Wishart process}
    \kwd{Generalized Wishart process}
    \kwd{Squared Bessel particle system}
    \kwd{Central limit theorem}
    \kwd{Ornstein-Uhlenbeck matrix process}
  \end{keyword}

\end{frontmatter}

\section{Introduction}\label{sec:intro}

Recently 
general stochastic differential equations (SDEs)
on the group of symmetric matrices have attracted much interest.
A prominent example is 
the following generalized Wishart process  introduced  in \cite{Graczyk2013},
\begin{align} \label{matrix SDE}
  d X_t^N = g_N(X_t^N) dB_t h_N(X_t^N) + h_N(X_t^N) dB_t^\intercal g_N(X_t^N) + b_N(X_t^N) dt, \quad t \ge 0.
\end{align}
Here, $B_t$ is a Brownian matrix of dimension $N \times N$, and the
continuous functions $g_N, h_N, b_N: \mathbb{R} \rightarrow
\mathbb{R}$ act on the spectrum of $X_t^N$ (a function $f$ acts on the spectrum of a symmetric matrix $X = \sum_{j=1}^N \alpha_j u_ju_j^\intercal$ with eigenvalues $(\alpha_j)$ and eigenvectors $(u_j)$ if  $f(X)=\sum_{j=1}^N f( \alpha_j) u_ju_j^\intercal$). 
The generalized Wishart process \eqref{matrix SDE} includes as simple
examples the following well-known matrix-valued stochastic processes:  
the celebrated symmetric Brownian motion \citep{Dyson62}, 
the Wishart process \citep{Bru91}, and the symmetric matrix process  whose entries are independent Ornstein-Uhlenbeck processes
\citep{Chan1992}.

Suppose that $\lambda_1^N(t) \le \lambda_2^N(t) \le \ldots \le \lambda_N^N(t)$ are the eigenvalues of $X_t^N$. According to Theorem 3 in \citet{Graczyk2013}, if $\lambda_1^N(0) < \lambda_2^N(0) < \cdots < \lambda_N^N(0)$, then before the first collision time
\begin{align*}
	\tau_N = \inf\{t>0: \exists \ i \neq j, ~\lambda_i(t) = \lambda_j(t) \},
\end{align*}
the eigenvalues satisfy the following system of SDEs: for $1\le i \le N$,
\begin{align} \label{eigenvalue SDE}
	d \lambda_i^N(t)
	= 2 g_N(\lambda_i^N(t)) h_N(\lambda_i^N(t)) dW_i(t) + \left( b_N(\lambda_i^N(t)) + \sum_{j:j \neq i} \dfrac{G_N(\lambda_i^N(t),\lambda_j^N(t))}{\lambda_i^N(t) - \lambda_j^N(t)} \right) dt,
\end{align}
where $\{W_i, 1 \le i \le N\}$ are independent Brownian motions and 
\begin{equation}\label{eq-Gn}
G_N(x,y) = g_N^2(x) h_N^2(y) + g_N^2(y) h_N^2(x).
\end{equation}
In \citet{Graczyk2013,Graczyk2014}, some other conditions on the coefficient functions were imposed to ensure that \eqref{eigenvalue SDE}  has a unique strong solution and the collision time $\tau_N$ is infinite almost surely.

Let $L_N(t)$ be the empirical measure of the eigenvalues $\{\lambda_i^N(t), 1\le i\le N\}$, i.e.,
\begin{align}\label{eq-empi-meas}
  L_N(t) = \dfrac{1}{N} \sum_{i=1}^N \delta_{\lambda_i^N(t)}.
\end{align}
In connection with  the theory of random matrices, it is of interest to investigate possible limits of these empirical measures $\{ L_N(t)\}$ when $N$ grows to infinity ({\em high-dimensional limits}). The literature on such high-dimensional limits is sparse. An early result is the derivation of the Wigner semi-circle law as the only equilibrium point (with finite moments of all orders) of  the  equation satisfied by the limit of eigenvalue empirical measure process in \cite{Chan1992}, where the symmetric matrix process has independent Ornstein-Uhlenbeck processes as its entries. The results were later  generalized in \cite{Rogers1993} to the following interacting particle system
\begin{align*}
  dX_i = \sqrt{\dfrac{2\alpha}{N}} dB_i + \left( - \theta X_i + \dfrac{\alpha}{N} \sum_{j:j\neq i} \dfrac{1}{X_i - X_j} \right) dt, \quad 1 \le i \le N, t \ge 0.
\end{align*}
\cite{Cepa1997} further generalized  these SDEs to 
\begin{align*}
  dX_i = \sigma(X_i) dB_i + \left( b(X_i) + \sum_{j:j\neq i} \dfrac{\gamma}{X_i - X_j} \right) dt, \quad 1 \le i \le N, t \ge 0,
\end{align*}
with some coefficient functions $b$, $\sigma$ and constant $\gamma$.
Another important case is the Mar\v{c}enko-Pastur law for 
the eigenvalue empirical measure process derived in \cite{Duvillard2001}.

The eigenvalue SDEs \eqref{eigenvalue SDE} generalize the eigenvalue SDEs in \cite{Chan1992} and \cite{Duvillard2001}, as well as the particle system in \cite{Rogers1993}. High-dimensional limits for these
eigenvalue SDEs appeared very recently in \cite{Song2019} and \cite{Malecki-arxiv}. Particularly in the former article, it was proved that under  proper conditions, $\{L_N(t), t \in [0,T]\}_{N \in \mathbb{N}}$ is relatively compact in $(C[0, T], M_1(\R))$ almost surely. Here $M_1(\R)$ is the set of probability measures on $\R$ endowed with the topology induced by the weak convergence of measures.  Furthermore, any limit measure $\{\mu_t, t \in [0,T]\}$ from a converging subsequence satisfies
\begin{align} \label{limit measure equation Stieltjes}
	\int \dfrac{\mu_t(dx)}{z-x}
	=& \int \dfrac{\mu_0(dx)}{z-x} + \int_{0}^t \left[ \int \dfrac{b(x)}{(z-x)^2} \mu_s(dx) \right] ds\notag \\
	&+ \int_{0}^t \left[ \iint \dfrac{G(x,y)}{(z-x) (z-y)^2} \mu_s(dx) \mu_s(dy) \right] ds, ~~ \forall z \in \mathbb{C} \setminus \mathbb{R},
\end{align}
with
\begin{equation}\label{eq-bg}
b(x)=\lim_{N\to\infty}b_N(x)~~ \text{ and  } ~~ G(x,y)=\lim_{N\to \infty} NG_N(x,y),
\end{equation}
 uniformly. Note that \cite{Song2019} provided examples where such limit $\{\mu_t,
t \in [0,T]\}$ is unique. However, conditions for the uniqueness are
still unknown for the general system \eqref{limit measure equation Stieltjes}. 

In this paper, we study the fluctuations of $\{L_N(t), t \in [0,T]\}$ around  the limit $\{\mu_t, t \in [0,T]\}$. Up to considering a subsequence, the theory is here developed, without loss of generality, 
by assuming the convergence of the whole sequence $\{L_N(t), t \in [0,T]\}$ to $\{\mu_t, t \in [0,T]\}$. Consider the random fluctuations 
\begin{align}\label{eq-functional}
	\mathcal{L}_t^N(f) = N \langle f, L_N(t) - \mu_t \rangle
	= \sum_{i=1}^N f(\lambda_i^N(t)) - N \langle f, \mu_t \rangle,
\end{align}
for $f \in \mathbb{F}$,  where $\mathbb F$ is an appropriate space of test functions given by \eqref{space F} or \eqref{space F'} in Section \ref{section-CLT}. The main purpose of the paper is to find a Gaussian limit for the centered process
\begin{align}\label{eq-Q}
	Q_t^N(f) &= \mathcal{L}_t^N(f) - \mathcal{L}_0^N(f) - \int_{0}^t \mathcal{L}_s^N(f'b) ds - \dfrac{1}{2} \int_{0}^t \langle f''(x) G(x,x), \mu_s \rangle ds\notag \\
	&\quad- \int_0^t \mathcal{L}_s^N\left( \int \dfrac{f'(x) - f'(y)}{x - y} G(x,y) \mu_s(dx) \right) ds\notag \\
	&\quad- \dfrac{N}{2} \int_{0}^t \iint \dfrac{f'(x) - f'(y)}{x - y} G(x,y) [L_N(s)(dx) - \mu_s(dx)] [L_N(s)(dy) - \mu_s(dy)] ds,
\end{align}
as $N$ goes to infinity. To our best knowledge, the literature on this topic is quite limited, and we only refer to \cite{cd01, Anderson2010} which concern the cases of Dyson's Brownian motion and Wishart process.

Now, we briefly explain the structure of this paper as follows.

The main results in this paper are presented in Section \ref{section-CLT}. The central limit theorem (CLT) for the empirical measure of the eigenvalues \eqref{eigenvalue SDE} is obtained in Section \ref{sec:wishart}. The same techniques allow to establish the CLT   in Scetion \ref{sec:particle} for the empirical measure of a class of particle system \eqref{SDE-particle}  which was introduced in \cite{Graczyk2014} 
as an generalization of \eqref{eigenvalue SDE}.  Note that in particular \eqref{SDE-particle} includes  the particle system studied in \cite{Cepa1997} as a special example. 
 
In Section \ref{section-application}, we apply the results  in Section \ref{section-CLT} to obtain the CLTs for the eigenvalues of Wishart process in Section \ref{sec-wishart}, for the Dyson's Brownian motion in Section \ref{sec-dyson}, and for the eigenvalues of symmetric Ornstein-Uhlenbeck matrix process in Section \ref{sec-ou}, respectively.  Note that for these three cases, under proper initial conditions, we can obtain the boundedness for the eigenvalues/particles, which enables us to obtain more precise CLTs for a wider class of test functions. In order to obtain such bounds starting from more general initial conditions, inspired by \cite{sniady02} and \cite{Anderson2010},  in Section \ref{section-comparison} we develop a comparison principle for SDE \eqref{eigenvalue SDE} and particle system \eqref{SDE-particle}.  This comparison principle also allows to extend the CLTs developed in Section \ref{section-application} to a wider class of particles systems (Corollaries \ref{general wishart}, \ref{general Dyson} and \ref{general OU}).

Furthermore, due to the special structures of the Wishart process, the Dyson's Brownian motion, and the Ornstein-Uhlenbeck matrix process, we are able to directly characterize the fluctuations $\{\L_t(x^n), t\in[0,T]\}_{n\in\mathbb N}$, where $\L_t(x^n)$ is the limit of $\L_t^N(x^n)$, by recursive formulas (See Theorems \ref{CLT for Wishart}, \ref{CLT for Dyson}, \ref{CLT for OU} and the remarks thereafter).
 For the Dyson's Brownian motion, the CLT was obtained in \cite{cd01} with null initial condition, and the restriction on the initial condition was later relaxed in \cite{Anderson2010}. This CLT is recovered in Section \ref{sec-dyson} with slightly more general initial condition. For the eigenvalue processes of Wishart process, the CLT was obtained in \cite{cd01} again with null initial condition, and it is now extended in Section \ref{sec-wishart} allowing more general initial conditions. Lastly, the CLT obtained in Section \ref{sec-ou} for the eigenvalue process of Ornstein-Uhlenbeck matrix process seems new.
 
Finally, in Section \ref{sec-appendix} some useful lemmas are provided.

\section{Central limit theorems}\label{section-CLT}
In this section, we prove our main results of the CLTs for eigenvalues of general Wishart processes in Section \ref{sec:wishart} and for particle systems in Section \ref{sec:particle}, repsectively.
\subsection{Central limit theorem for eigenvalues of general Wishart processes}\label{sec:wishart}

In this subsection, we study the CLT for the empirical measure \eqref{eq-empi-meas} of the eigenvalues  \eqref{eigenvalue SDE} of generalized Wishart process \eqref{matrix SDE}. 

Recall that the functions $b(x)$ and $G(x,x)$ are defined in \eqref{eq-bg}, and $Q_t^N(f)$ is defined in \eqref{eq-Q}. We use the following space of test functions
\begin{align}
	\mathbb{F} = &\Bigg\{ f \in C_b^2(\mathbb{R}): ~~\| f'(x) b(x) \|_{L^{\infty}(\mathbb{R})} < \infty,\nonumber \\
	& \qquad \left\| \dfrac{f'(x) - f'(y)}{x-y} G(x,y) \right\|_{L^{\infty}(\mathbb{R}^2)} < \infty, ~~\| (f'(x))^2 G(x,x) \|_{L^{\infty}(\mathbb{R})} < \infty \Bigg\}.\label{space F}
\end{align}

\begin{theorem} \label{Thm-Wishart}
Assume that the limit functions $b(x)$ and $G(x,y)$ are continuous and satisfy
\begin{align} \label{conv-speed}
\begin{aligned}
	\lim_{N \rightarrow \infty} N \|b_N(x) - b(x)\|_{L^{\infty}(\mathbb{R})} = 0, \\
	\lim_{N \rightarrow \infty} N \|NG_N(x,y) - G(x,y)\|_{L^{\infty}(\mathbb{R}^2)} = 0.
\end{aligned}
\end{align}
Also assume that \eqref{eigenvalue SDE} has a non-exploding and non-colliding strong solution, such that the sequence of the empirical measures  $\{L_N(t), t \in [0,T]\}_{N \in \mathbb{N}}$ given by \eqref{eq-empi-meas} converges weakly to $\{\mu_t, t\in [0,T]\}$. 

Then, for any $k \in \mathbb{N}$ and any $f_1, \ldots, f_k \in \mathbb{F}$, as $N$ goes to infinity, $(Q_t^N(f_1), \ldots, Q_t^N(f_k))_{t \in [0,T]}$ converges in distribution to a Gaussian process $(G_t(f_1), \ldots, G_t(f_k))_{t \in [0,T]}$ with mean  zero and covariance
\begin{align} \label{cov}
	\mathbb{E} \left[ G_t(f_i) G_s(f_j) \right]
	= 2 \int_0^{t \wedge s} \langle f_i'(x) f_j'(x) G(x,x), \mu_u \rangle du, ~~ 1 \le i,j \le k.
\end{align}
\end{theorem}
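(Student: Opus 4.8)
The plan is to realize $Q^N_t(f)$ as a martingale plus a remainder that vanishes in the high-dimensional limit, and then to identify the Gaussian limit via the martingale central limit theorem. First I would apply Itô's formula to $\sum_{i=1}^N f(\lambda_i^N(t))$ using the eigenvalue SDEs \eqref{eigenvalue SDE}. This produces: a martingale term $M^N_t(f) = \sum_i \int_0^t 2 g_N(\lambda_i^N) h_N(\lambda_i^N) f'(\lambda_i^N) \, dW_i$; a drift term coming from $b_N$ and from the singular pair interaction $\sum_{j\neq i} G_N(\lambda_i^N,\lambda_j^N)/(\lambda_i^N-\lambda_j^N)$; and the second-order Itô term $\frac12 \sum_i \int_0^t 4 g_N^2(\lambda_i^N) h_N^2(\lambda_i^N) f''(\lambda_i^N)\, ds$. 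The standard symmetrization trick rewrites the double sum $\sum_{i\neq j} \frac{G_N(\lambda_i,\lambda_j)}{\lambda_i-\lambda_j} f'(\lambda_i)$ as $\frac12\sum_{i\neq j} \frac{f'(\lambda_i)-f'(\lambda_j)}{\lambda_i-\lambda_j}G_N(\lambda_i,\lambda_j)$, which is bounded on the diagonal by the hypothesis $f\in\mathbb F$, so no regularization near coincident eigenvalues is needed. Matching these terms against the definition \eqref{eq-Q} of $Q^N_t(f)$, and using the convergence rates \eqref{conv-speed} to replace $b_N$ by $b$ and $NG_N$ by $G$ (the factor $N$ in \eqref{conv-speed} exactly compensates the $N$-scaling in $\mathcal L^N$ and in the quadratic term), I expect to obtain the identity
\begin{align*}
	Q^N_t(f) = M^N_t(f) + R^N_t(f),
\end{align*}
where $R^N_t(f)\to 0$ uniformly on $[0,T]$ in probability (or even $L^1$), the error being controlled by $N\|b_N-b\|_\infty$, $N\|NG_N-G\|_\infty$, and the already-established weak convergence $L_N\to\mu$ applied to the bounded continuous integrands supplied by $f\in\mathbb F$.

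The second step is to compute the quadratic variation of the martingale part. Since the $W_i$ are independent,
\begin{align*}
	\langle M^N(f_i), M^N(f_j)\rangle_t = \int_0^t 4\sum_{\ell=1}^N g_N^2(\lambda_\ell^N) h_N^2(\lambda_\ell^N) f_i'(\lambda_\ell^N) f_j'(\lambda_\ell^N)\, ds = \int_0^t 2 N \langle \, NG_N(x,x)\cdot\tfrac{2}{N}f_i'(x)f_j'(x) , L_N(s)\rangle \, ds,
\end{align*}
wait—more precisely $4 g_N^2 h_N^2 = 2 G_N(x,x)$, so this equals $\int_0^t 2 N\langle G_N(x,x) f_i'(x) f_j'(x), L_N(s)\rangle\,ds$; multiplying and dividing appropriately, and using $NG_N\to G$ together with $L_N(s)\to\mu_s$ and the boundedness of $(f')^2 G(x,x)$ from the definition of $\mathbb F$, this converges to $2\int_0^t \langle f_i'(x) f_j'(x) G(x,x),\mu_s\rangle\, ds$, which is exactly the claimed covariance kernel \eqref{cov}. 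Note the bracket scales correctly: $\langle M^N(f)\rangle_t$ is $O(1)$ because each of the $N$ summands is $O(1/N)$ after the $NG_N$ normalization, consistent with $Q^N_t(f)$ being an $O(1)$ fluctuation.

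The third step is to invoke a functional martingale central limit theorem (e.g. the criterion in Ethier–Kurtz, or Rebolledo's theorem) for the vector-valued martingale $(M^N(f_1),\dots,M^N(f_k))$. The two ingredients are: (i) convergence of the predictable quadratic (co)variations to the deterministic limit $t\mapsto 2\int_0^t\langle f_i'f_j' G(x,x),\mu_u\rangle du$, established in step two; and (ii) a Lindeberg-type negligibility of jumps, which is automatic here since $M^N(f)$ is continuous — one only needs that the jumps of the bracket are asymptotically negligible, equivalently that $\sup_{\ell}\frac1N \to 0$, trivially true. These give convergence of $M^N$ to the Gaussian process $G_\cdot$ with independent increments and the stated covariance; combined with $R^N\to 0$ and Slutsky's lemma, $Q^N\to G$ in distribution on $C([0,T],\R^k)$.

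I expect the main obstacle to be the rigorous control of the error term $R^N_t(f)$ — in particular the quadratic fluctuation term
\begin{align*}
	\frac{N}{2}\int_0^t\iint \frac{f'(x)-f'(y)}{x-y} G(x,y)\,[L_N(s)-\mu_s](dx)\,[L_N(s)-\mu_s](dy)\, ds,
\end{align*}
which appears in \eqref{eq-Q} with an explicit factor $N$ and is therefore \emph{not} obviously negligible; it must be produced exactly by the Itô expansion (after replacing $NG_N$ by $G$) rather than discarded, so the bookkeeping matching \eqref{eq-Q} term-by-term has to be done with care, and the passage from $NG_N$ to $G$ inside a term already multiplied by $N$ is precisely where the first rate in \eqref{conv-speed} (with its extra factor $N$) is consumed. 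A secondary technical point is upgrading the finite-dimensional-in-$t$ convergence to process-level (tightness in $C([0,T])$), for which one uses the martingale moment bounds from the bracket computation together with Kolmogorov's criterion or the Aldous–Rebolledo tightness criterion; since $f$ and the relevant coefficient combinations are bounded on the support this is routine but must be stated.
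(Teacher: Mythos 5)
Your proposal is correct and follows essentially the same route as the paper: apply It\^o's formula to $\sum_i f(\lambda_i^N(t))$, symmetrize the pairwise interaction, match term-by-term against the definition of $Q_t^N(f)$ to show $Q_t^N(f) = N M_f^N(t) + R_t^N(f)$ with $R_t^N \to 0$, compute the quadratic covariation of the (vector-valued) martingale, and invoke Rebolledo's theorem. Your correct identification that the quadratic fluctuation term with the explicit factor $N$ must be reproduced exactly by the It\^o expansion (and is already subtracted in the definition of $Q_t^N$, not discarded) is precisely the bookkeeping the paper carries out in \eqref{double integral term}; the only minor redundancy is that Rebolledo's theorem as stated in the paper already delivers process-level convergence in $C([0,T],\mathbb{R}^k)$, so no separate tightness argument via Kolmogorov or Aldous--Rebolledo is needed.
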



\begin{proof} 

By It\^o's formula (see \cite{Song2019} for more details), for $f \in C^2[0,T]$,
\begin{align}
  \nonumber
  \langle f, L_N(t) \rangle
  &= \langle f, L_N(0) \rangle + M_f^N(t) + \int_{0}^t \langle f'b_N, L_N(s) \rangle ds + \int_{0}^t \langle f''g_N^2h_N^2, L_N(s) \rangle ds  \\
	&\quad + \dfrac{N}{2} \int_{0}^t \iint \dfrac{f'(x) - f'(y)}{x - y} G_N (x, y) L_N(s)(dx) L_N(s)(dy) ds,\label{pair formula for empirical measure}
\end{align}
 where we use the convention
$\frac{f'(x) - f'(y)}{x - y} = f''(x)$
on $\{(x,y) \in \mathbb{R}^2: x = y\}$, and $M_f^N(t)$ is a local martingale,
\begin{align} \label{martingale term}
	M_f^N(t) = \dfrac{2}{N} \sum_{i=1}^N \int_{0}^t f'(\lambda_i^N(s)) g_N(\lambda_i^N(s)) h_N(\lambda_i^N(s)) dW_i(s),
\end{align}
with quadratic variation
\begin{align} \label{quadratic variation of martingale}
	\langle M_f^N \rangle_t
	= \dfrac{4}{N} \int_{0}^t \langle |f' g_N h_N|^2, L_N(s) \rangle ds
	= \dfrac{2}{N} \int_{0}^t \langle |f'(x)|^2 G_N(x,x), L_N(s) \rangle ds.
\end{align} 
On the other hand, for $f \in \mathbb{F}$, under the condition \eqref{conv-speed},  one may apply the approach used in the proof of Theorem 2.2 in \cite{Song2019} to get
\begin{align} \label{limit measure equation general}
	\langle f, \mu_t \rangle
	&= \langle f, \mu_0 \rangle + \int_{0}^t \langle f'b, \mu_s \rangle ds + \dfrac{1}{2} \int_{0}^t \iint \dfrac{f'(x) - f'(y)}{x - y} G(x,y) \mu_s(dx) \mu_s(dy) ds.
\end{align}
(Indeed, the proof of Theorem 2.2 in \cite{Song2019} deals with the special case  $f(x) = (z-x)^{-1}$ with $z\in \mathbb C\backslash \R$.) 

Thus, \eqref{pair formula for empirical measure} and  \eqref{limit measure equation general} yield 
\begin{align} \label{measure error}
	&\quad \L_t^N(f)=N \langle f, L_N(t) - \mu_t \rangle  \\
	&= N \langle f, L_N(0) - \mu_0 \rangle + N M_f^N(t) \nonumber \\
	&\quad + N \int_{0}^t \langle f'b_N, L_N(s) \rangle - \langle f'b, \mu_s \rangle ds + N \int_{0}^t \langle f''g_N^2h_N^2, L_N(s) \rangle ds \nonumber \\
	&\quad + \dfrac{N}{2} \int_{0}^t \iint \dfrac{f'(x) - f'(y)}{x - y} [NG_N (x, y) L_N(s)(dx) L_N(s)(dy) - G(x,y) \mu_s(dx) \mu_s(dy)] ds.\notag
\end{align}

The third term on the right-hand side of \eqref{measure error} can be written as
\begin{align*}
	&\quad N \int_{0}^t \langle f'b_N, L_N(s) \rangle - \langle f'b, \mu_s \rangle ds \nonumber \\
	&= N \int_{0}^t \langle f'b_N - f'b, L_N(s) \rangle ds + N \int_{0}^t \langle f'b, L_N(s) - \mu_s \rangle ds \nonumber \\
	&= N \int_{0}^t \langle f'b_N - f'b, L_N(s) \rangle ds + \int_{0}^t \mathcal{L}_s^N(f'b) ds.
\end{align*}
Thus, we have
\begin{align} \label{error of drift term}
	&\lim_{N\to\infty}\left| N \int_{0}^t \langle f'b_N, L_N(s) \rangle - \langle f'b, \mu_s \rangle ds - \int_{0}^t \mathcal{L}_s^N(f'b) ds \right|\notag\\
	\le& \lim_{N\to\infty} N \int_{0}^t \left| \langle f'b_N - f'b, L_N(s) \rangle \right| ds \nonumber \\
	\le&\lim_{N\to\infty} N T \|f'\|_{L^{\infty}(\mathbb{R})} \|b_N - b\|_{L^{\infty}(\mathbb{R})}=0.
\end{align}

For the fourth term on the right-hand side of \eqref{measure error}, 
\begin{align*}
	&N \int_{0}^t \langle f''g_N^2h_N^2, L_N(s) \rangle ds
	= \dfrac{N}{2} \int_{0}^t \langle f''(x)G_N(x,x), L_N(s) \rangle ds \\
	= &\dfrac{1}{2} \int_{0}^t \langle f''(x) (NG_N(x,x) - G(x,x)), L_N(s) \rangle ds + \dfrac{1}{2} \int_{0}^t \langle f''(x) G(x,x), L_N(s) \rangle ds.
\end{align*}
Hence, we have
\begin{align} \label{empirical measure term}
	& \left| N \int_{0}^t \langle f''g_N^2h_N^2, L_N(s) \rangle ds - \dfrac{1}{2} \int_{0}^t \langle f''(x) G(x,x), \mu_s \rangle ds \right| \nonumber \\
	\le& \left| \dfrac{1}{2} \int_{0}^t \langle f''(x) (NG_N(x,x) - G(x,x)), L_N(s) \rangle ds \right| + \left| \dfrac{1}{2} \int_{0}^t \langle f''(x) G(x,x), L_N(s) - \mu_s \rangle ds \right| \nonumber \\
	\le& \dfrac{1}{2} T \|f''\|_{L^{\infty}(\mathbb{R})} \|NG_N(x,x) - G(x,x)\|_{L^{\infty}(\mathbb{R}^2)} + \dfrac{1}{2} \left| \int_{0}^t \langle f''(x) G(x,x), L_N(s) - \mu_s \rangle ds \right|\notag \\
	&\longrightarrow  0,
\end{align}
as $N\to\infty$, where the last step follows from the weak convergence of $\{L_N(t), t \in [0,T]\}_{N \in \mathbb{N}}$ and the continuity and boundedness of  $G(x,x) f''(x)$  for $f \in \mathbb{F}$. 

The fifth term on the right-hand side of \eqref{measure error} can be written as
\begin{align} \label{double integral term}
	&\quad \dfrac{N}{2} \int_{0}^t \iint \dfrac{f'(x) - f'(y)}{x - y} [NG_N (x, y) L_N(s)(dx) L_N(s)(dy) - G(x,y) \mu_s(dx) \mu_s(dy)] ds \nonumber \\
	&= \dfrac{N}{2} \int_{0}^t \iint \dfrac{f'(x) - f'(y)}{x - y} [NG_N (x, y) - G(x,y)] L_N(s)(dx) L_N(s)(dy) ds \nonumber \\
	&\quad + \dfrac{N}{2} \int_{0}^t \iint \dfrac{f'(x) - f'(y)}{x - y} G(x,y) [L_N(s)(dx) - \mu_s(dx)] [L_N(s)(dy) - \mu_s(dy)] ds \nonumber \\
	&\quad + \dfrac{N}{2} \int_{0}^t \iint \dfrac{f'(x) - f'(y)}{x - y} G(x,y) \mu_s(dx) [L_N(s)(dy) - \mu_s(dy)] ds \nonumber \\
	&\quad + \dfrac{N}{2} \int_{0}^t \iint \dfrac{f'(x) - f'(y)}{x - y} G(x,y) [L_N(s)(dx) - \mu_s(dx)] \mu_s(dy) ds \nonumber \\
	&= \dfrac{N}{2} \int_{0}^t \iint \dfrac{f'(x) - f'(y)}{x - y} [NG_N (x, y) - G(x,y)] L_N(s)(dx) L_N(s)(dy) ds \nonumber \\
	&\quad + \dfrac{N}{2} \int_{0}^t \iint \dfrac{f'(x) - f'(y)}{x - y} G(x,y) [L_N(s)(dx) - \mu_s(dx)] [L_N(s)(dy) - \mu_s(dy)] ds \nonumber \\
	&\quad + \int_0^t \mathcal{L}_s^N\left( \int \dfrac{f'(x) - f'(y)}{x - y} G(x,y) \mu_s(dx) \right) ds,
\end{align}
where the last equality follows from the symmetry of  $\frac{f'(x) - f'(y)}{x - y} G(x,y)$. 
For the first term on the right-hand side of  \eqref{double integral term}, we have
\begin{align} \label{double integral subterm converge}
	&\lim_{N\to\infty} \left| \dfrac{N}{2} \int_{0}^t \iint \dfrac{f'(x) - f'(y)}{x - y} [NG_N (x, y) - G(x,y)] L_N(s)(dx) L_N(s)(dy) ds \right| \nonumber \\
	\le &\lim_{N\to\infty} \dfrac{NT}{2} \left\| \dfrac{f'(x) - f'(y)}{x - y} \right\|_{L^{\infty}(\mathbb{R}^2)} \|NG_N(x,y) - G(x,y)\|_{L^{\infty}(\mathbb{R}^2)}=0.
\end{align}

Therefore, by \eqref{eq-Q}, \eqref{measure error} and the above estimations \eqref{error of drift term}, \eqref{empirical measure term}, \eqref{double integral term}, and \eqref{double integral subterm converge}, we have that the term
\begin{align}
	&\quad Q_t^N(f) - N M_f^N(t)\notag \\
	&=\mathcal{L}_t^N(f) - \mathcal{L}_0^N(f) - N M_f^N(t) - \int_{0}^t \mathcal{L}_s^N(f'b) ds \notag\\
	&\quad- \dfrac{1}{2} \int_{0}^t \langle f''(x) G(x,x), \mu_s \rangle ds - \int_0^t \mathcal{L}_s^N\left( \int \dfrac{f'(x) - f'(y)}{x - y} G(x,y) \mu_s(dx) \right) ds\notag \\
	&\quad- \dfrac{N}{2} \int_{0}^t \iint \dfrac{f'(x) - f'(y)}{x - y} G(x,y) [L_N(s)(dx) - \mu_s(dx)] [L_N(s)(dy) - \mu_s(dy)] ds\label{eq-2.13'}
\end{align}
converges to $0$ almost surely as $N \rightarrow \infty$, uniformly in $t \in [0,T]$. Note that in \eqref{error of drift term}, \eqref{empirical measure term} and \eqref{double integral subterm converge}, the integrand function is bounded,  and hence the convergence is also in $L^p$ for all $p \ge 1$. Thus, $Q_t^N(f) - N M_f^N(t)$ with $f\in \mathbb F$ converges to $0$ in $L^p$ for all $p \ge 1$ uniformly in $t \in [0,T]$. 

Therefore, to prove the desired result, it suffices to show that, for any $k\in \mathbb N$ and $f_1, f_2, \dots, f_k\in \mathbb F$, the vector-valued stochastic process  $(N M_{f_1}^N(t), N M_{f_2}^N(t), \dots, N M_{f_k}^N(t))_{t\in[0,T]}$ converges in distribution to a centered Gaussian process $(G_t(f_1), G_t(f_2), \dots, G_t(f_k))_{t\in[0,T]}$ with covariance given by \eqref{cov}. To this end, by Lemma \ref{Rebolledo Thm} it suffices to prove that $\{ N M_f^N(t), t \in [0,T] \}_{N\in \mathbb N}$ are  martingales for $f \in \mathbb{F}$ such that  the following limit holds  in $L^1(\Omega)$,
\begin{align*}
	\lim_{N \rightarrow \infty} \langle N M_{f_1}^N, N M_{f_2}^N \rangle_t
	= 2 \int_0^t \langle f_1'(x) f_2'(x) G(x,x), \mu_s \rangle ds,  ~~\forall f_1, f_2\in \mathbb F. 
\end{align*}

 By the uniform convergence of $NG_N(x,y)$ towards $G(x,y)$, the boundedness of $f'(x)^2G(x,x)$ and \eqref{quadratic variation of martingale}, one can show that $\{ N M_f^N(t), t \in [0,T] \}_{N\in \mathbb N}$ are  martingales.  It follows from \eqref{martingale term} that, for $f_1, f_2 \in \mathbb{F}$,
\begin{align*}
	\langle N M_{f_1}^N, N M_{f_2}^N \rangle_t
	&= 4 \sum_{i=1}^N \int_{0}^t f_1'(\lambda_i^N(s)) f_2'(\lambda_i^N(s)) g_N^2(\lambda_i^N(s)) h_N^2(\lambda_i^N(s)) ds \\
	&= 4N \int_0^t \langle f_1' f_2' g_N^2 h_N^2, L_N(s) \rangle ds \\
	&= 2N \int_0^t \langle f_1'(x) f_2'(x) G_N(x,x), L_N(s) \rangle ds \\
	&= 2 \int_0^t \langle f_1'(x) f_2'(x) (NG_N(x,x) - G(x,x)), L_N(s) \rangle ds \\
	&\quad + 2 \int_0^t \langle f_1'(x) f_2'(x) G(x,x), L_N(s) \rangle ds.
\end{align*}
The term $\int_0^t \langle f_1'(x) f_2'(x) (NG_N(x,x) - G(x,x)), L_N(s) \rangle ds$ converges to 0 a.s. and in $L^p$ for all $p\ge 1$ due to the boundedness of $f_1'(x)$ and $f_2'(x)$ and the uniform convergence of $NG_N(x,y)$ towards $G(x,y)$. Furthermore, the following  convergence \begin{align*}
	\lim_{N \rightarrow \infty} \int_0^t \langle f_1'(x) f_2'(x) G(x,x), L_N(s) \rangle ds = \int_0^t \langle f_1'(x) f_2'(x) G(x,x), \mu_s \rangle ds,
\end{align*}
holds a.s. and in $L^p$ for all $p\ge1$, because of  the weak convergence of  $\{L_N(t), t \in [0,T]\}_{N \in \mathbb{N}}$ to $\{\mu_t, t\in[0,T]\}$ and the boundedness of $f_1'(x)f_2'(x)G(x,x)$. Therefore, 
$\langle N M_{f_1}^N, N M_{f_2}^N \rangle_t$ converges to $2 \int_0^t \langle f_1'(x) f_2'(x) G(x,x), \mu_s \rangle ds$ a.s. and in $L^p$ for all $p \ge 1$.

The proof is concluded. 
\end{proof}

If the eigenvalues in \eqref{eigenvalue SDE} are bounded,  the  test function space  $\mathbb{F}$ can be enlarged by removing the boundedness condition in the above theorem. 

\begin{corollary} \label{Thm for bounded general Wishart process} Assume the same conditions as in Theorem \ref{Thm-Wishart}.
Moreover, for  $T < \infty$, assume that
\begin{align} \label{bounded-eigen}
	\limsup_{N \rightarrow \infty} \sup_{t \in [0,T]} \max_{1 \le i \le N} \left| \lambda_i^N(t) \right| \le C(T),
\end{align}
a.s. for some constant $C(T)$ depending on $T$. Then Theorem \ref{Thm-Wishart} still holds if the set $\mathbb{F}$ of  test function  is replaced by $C^2(\mathbb{R})$.
\end{corollary}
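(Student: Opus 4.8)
The plan is to observe that, under the additional hypothesis \eqref{bounded-eigen}, every quantity entering the proof of Theorem~\ref{Thm-Wishart} depends on a test function $f$ only through the restrictions of $f,f',f''$ to a fixed compact set, on which $C^2(\mathbb{R})$ functions are automatically bounded. Fix $\varepsilon>0$ and set $K=[-C(T)-\varepsilon,\,C(T)+\varepsilon]$. By \eqref{bounded-eigen} there is an almost sure event on which, for all large $N$, $\operatorname{supp}L_N(t)\subset K$ for every $t\in[0,T]$; since $L_N(t)\to\mu_t$ weakly and $K$ is closed, the Portmanteau theorem gives $\operatorname{supp}\mu_t\subset K$ for all $t\in[0,T]$ as well. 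Hence for $f\in C^2(\mathbb{R})$ the three finiteness conditions in \eqref{space F} play no role: wherever $\|f'(x)b(x)\|_{L^\infty(\mathbb{R})}$, $\|(f'(x))^2G(x,x)\|_{L^\infty(\mathbb{R})}$ or $\|\tfrac{f'(x)-f'(y)}{x-y}G(x,y)\|_{L^\infty(\mathbb{R}^2)}$ is used, it may be replaced by $\|f'\|_{L^\infty(K)}$, $\|f''\|_{L^\infty(K)}$, $\sup_{K^2}|G|$ respectively, all finite by continuity.

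Concretely, I would rerun the chain \eqref{measure error}--\eqref{double integral subterm converge} with every $L^\infty(\mathbb{R})$ (resp.\ $L^\infty(\mathbb{R}^2)$) norm of an object built from $f$ replaced by the corresponding norm over $K$ (resp.\ $K^2$); the rate hypotheses \eqref{conv-speed} on $b_N-b$ and $NG_N-G$ are untouched, so \eqref{error of drift term}, \eqref{empirical measure term} and \eqref{double integral subterm converge} still vanish after multiplication by $N$, using e.g.\ $|\langle f'(b_N-b),L_N(s)\rangle|\le\|f'\|_{L^\infty(K)}\|b_N-b\|_{L^\infty(\mathbb{R})}$. Equation \eqref{limit measure equation general} extends from $\mathbb{F}$ to $C^2(\mathbb{R})$ by the argument of \cite{Song2019}, again because $\mu_s$ is carried by $K$. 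The weak-convergence steps --- $\int_0^t\langle f''(x)G(x,x),L_N(s)-\mu_s\rangle ds\to0$, the convergence of the inner integral defining $\mathcal{L}_s^N$ in \eqref{double integral term}, and of $\langle f_1'f_2'G(x,x),L_N(s)\rangle$ --- are legitimized by fixing $\chi\in C_c^\infty(\mathbb{R})$ with $\chi\equiv1$ on $K$: for large $N$, $\chi$ can be inserted into each such pairing without changing its value, and the resulting integrands are bounded and continuous on all of $\mathbb{R}$, so the weak convergence in $(C[0,T],M_1(\mathbb{R}))$ together with dominated convergence applies exactly as in the original proof. This gives $Q_t^N(f)-NM_f^N(t)\to0$ almost surely and in every $L^p$, uniformly in $t\in[0,T]$, for $f\in C^2(\mathbb{R})$.

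The step I expect to need the most care is the martingale part, because \eqref{bounded-eigen} is only an almost sure bound. For fixed $N$, \eqref{quadratic variation of martingale} is finite by non-explosion, so $NM_f^N$ is a local martingale; I would localize by $\sigma_N=\inf\{t\in[0,T]:\max_{1\le i\le N}|\lambda_i^N(t)|>C(T)+\varepsilon\}\wedge T$, which satisfies $\mathbb{P}(\sigma_N=T)\to1$. On $[0,\sigma_N]$ all eigenvalues lie in $K$, so the quadratic variation of the stopped process is deterministically bounded (via \eqref{quadratic variation of martingale}, the uniform convergence $NG_N\to G$, and $\|f'\|_{L^\infty(K)}<\infty$), hence $NM_f^N(\cdot\wedge\sigma_N)$ is a true $L^2$-martingale; since $\mathbb{P}(\sigma_N=T)\to1$, passing to the stopped version affects neither the limit in distribution nor, on the almost sure event above, the limit of the cross-brackets. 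As in the proof of Theorem~\ref{Thm-Wishart}, $\langle NM_{f_i}^N,NM_{f_j}^N\rangle_t$ splits into a term bounded by $\|f_i'\|_{L^\infty(K)}\|f_j'\|_{L^\infty(K)}\|NG_N-G\|_{L^\infty(\mathbb{R}^2)}\to0$ and $2\int_0^t\langle f_i'f_j'G(x,x),L_N(s)\rangle ds\to2\int_0^t\langle f_i'f_j'G(x,x),\mu_s\rangle ds$; Lemma~\ref{Rebolledo Thm} then yields the centered Gaussian limit with covariance \eqref{cov}, and combining this with $Q_t^N(f)-NM_f^N(t)\to0$ concludes the proof. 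The only genuine obstacle beyond Theorem~\ref{Thm-Wishart} is handling this almost-sure (rather than $L^1$) localization cleanly, which the stopping-time truncation --- harmless for convergence in distribution --- resolves.
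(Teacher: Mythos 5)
Your plan is correct and follows essentially the same route as the paper: under \eqref{bounded-eigen}, almost surely the measures $L_N(t)$ (for large $N$) and $\mu_t$ live on a fixed compact $K$, a cutoff function reduces all pairings to compactly supported integrands, and the martingale CLT is then applied exactly as in Theorem~\ref{Thm-Wishart}. The paper's own proof carries this out slightly differently: it fixes $\eta\in C_c^\infty$ with $\eta\equiv 1$ on the eigenvalue support, replaces $f$ by $f\eta\in\mathbb{F}$ so that Theorem~\ref{Thm-Wishart} applies verbatim, and uses $f\eta=f$ (hence $M_f^N=M_{f\eta}^N$) on the a.s.\ event that all eigenvalues lie in $K$. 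Your version, inserting the cutoff inside each pairing and localizing the martingale by the stopping time $\sigma_N$, is a bit more laborious but equally valid --- and arguably a touch more robust, since it does not require verifying that the double-difference condition $\bigl\|\tfrac{(f\eta)'(x)-(f\eta)'(y)}{x-y}G(x,y)\bigr\|_{L^\infty(\mathbb{R}^2)}<\infty$ holds globally, only on $K^2$.

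One claim is an overclaim: you assert that $Q_t^N(f)-NM_f^N(t)\to 0$ ``almost surely \emph{and in every $L^p$}''. Under \eqref{bounded-eigen} alone this is not justified. Your deterministic bounds such as $|\langle f'(b_N-b),L_N(s)\rangle|\le\|f'\|_{L^\infty(K)}\|b_N-b\|_{L^\infty(\mathbb{R})}$ hold only on the event $\{\operatorname{supp}L_N(\cdot)\subset K\}$, i.e.\ for $N\ge N_0(\omega)$ with $N_0$ random; on the complementary (small) event nothing controls moments of the eigenvalues, so dominated convergence does not apply. Indeed the paper's remark immediately after this corollary states exactly this: \eqref{bounded-eigen} gives only a.s.\ convergence, and the $L^p$ convergence requires the additional moment hypothesis \eqref{eq-p-bound} of Corollary~\ref{Coro-Lp}. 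This does not affect your final conclusion, because a.s.\ uniform convergence (hence convergence in probability in $C[0,T]$) is already enough to transfer the distributional limit from $NM_f^N$ to $Q_t^N(f)$ via Slutsky's lemma; but the $L^p$ statement should be dropped.
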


\begin{proof}
It follows from \eqref{bounded-eigen} that  all but finitely many terms in $\{ \sup_{t \in [0,T]} \max_{1 \le i \le N} |\lambda_i^N(t)| \}_{N \in \mathbb{N}}$ are bounded by $A(T)=C(T) + 1$ a.s.. Thus there is a measurable set $A\subset \Omega$ with $\mathbb P(A)=1$  and a random variable $N_0\in \mathbb N$, such that for $\omega \in A$,  the empirical measure $\{ L_N(t)(\omega), t \in [0,T] \}$ is supported in $[-A(T), A(T)]$ for all $N\ge N_0(\omega)$. Hence the limit $\{\mu_t, t\in[0,T]\}$ also has the same support.  By \cite[1.46]{Rudin1991}, there exists a cut-off function $\eta(x) \in C^{\infty}(\mathbb{R})$ equal to $1$  on $[-A(T), A(T)]$, of which the  support is $[-2A(T), 2A(T)]$.  If we replace $f$ by $f\eta$, noting that  $f \eta \in \mathbb{F}$ for $f \in C^2(\mathbb{R})$ and that $f\eta=f$ on $[-A(T), A(T)]$, we can show that the term $Q_t^N(f) - N M_f^N(t)$ in \eqref{eq-2.13'} converges to 0 a.s. using the same argument as in the proof of Theorem \ref{Thm-Wishart}. Then following the rest part of the proof, it is easy to get the result of Theorem \ref{Thm-Wishart}.
\end{proof}

\begin{remark}
Under the conditions in Theorem \ref{Thm-Wishart}, \eqref{bounded-eigen} yields the almost sure convergence of $Q_t^N(f) - N M_f^N(t)$ towards $0$ for $f\in C^2(\R)$. The next Corollary provides a sufficient condition for the $L^p$ convergence for $p\ge 1$. 
\end{remark}

\begin{corollary} \label{Coro-Lp}  Assume the same conditions as in Theorem \ref{Thm-Wishart}.
For $T < \infty$, for all $p \ge 1$ and  all $N \ge cp$ for some positive constant  $c$, assume that
\begin{align}\label{eq-p-bound}
	\mathbb{E} \left[ \sup_{t \in [0,T]} \langle |x|^p, L_N(t) \rangle \right] \le C(T)^p,
\end{align}
where $C(T)$ is a positive constant  depending only on $T$.  Furthermore, assume that $G(x,x)$ and its derivative  have at most polynomial growth. Then for $f\in C^3(\mathbb{R})$ of which the derivatives have at most polynomial growth,  $Q_t^N(f) - N M_f^N(t)$ converges to $0$ in $L^p$  uniformly in $t \in [0,T]$ for all $p\ge 1$. 

As a consequence, Theorem \ref{Thm-Wishart} holds for such test functions $f$.
\end{corollary}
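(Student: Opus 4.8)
The plan is to re-run the estimates in the proof of Theorem \ref{Thm-Wishart}, but now tracking $L^p$-norms instead of almost-sure convergence, using the moment bound \eqref{eq-p-bound} to control the contributions coming from the (now unbounded) test function $f$ and from $G(x,x)$. The starting point is the decomposition \eqref{eq-2.13'}: we must show that each of the four ``error'' pieces there — the drift piece $N\int_0^t\langle f'b_N-f'b,L_N(s)\rangle\,ds$ together with $\int_0^t\mathcal L_s^N(f'b)\,ds$, the $f''$-piece in \eqref{empirical measure term}, the $\mathcal L_s^N$-double-integral piece, and the $L_N-\mu$ double-integral piece — converges to $0$ in $L^p$ uniformly in $t\in[0,T]$. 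Since $Q_t^N(f)-NM_f^N(t)$ equals a finite sum of such pieces, it then suffices to bound each one.

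First I would fix $p\ge1$ and note that, by \eqref{eq-p-bound}, for $N\ge cp$ all the relevant polynomial moments $\mathbb E[\sup_{t\le T}\langle |x|^q,L_N(t)\rangle]$ are bounded uniformly in $N$ for any fixed $q$ (take $q\le p$ after possibly enlarging the constant, or apply \eqref{eq-p-bound} with a larger exponent; the point is that $c$ is universal so $N\ge cp'$ holds for any fixed $p'$ once $N$ is large). Because $f\in C^3(\mathbb R)$ has derivatives of at most polynomial growth and $G(x,x)$ and its derivative also have at most polynomial growth, each integrand appearing in \eqref{eq-2.13'} — namely $f'b_N$, $f'b$, $f''(x)(NG_N(x,x)-G(x,x))$, $\frac{f'(x)-f'(y)}{x-y}G(x,y)$ and its variants — is dominated by a polynomial in $|x|$ (and $|y|$), uniformly in $N$: for the divided differences one writes $\frac{f'(x)-f'(y)}{x-y}=\int_0^1 f''(y+\theta(x-y))\,d\theta$, which is bounded by a polynomial in $|x|+|y|$ since $f''$ has polynomial growth; then multiply by $G(x,y)=g_N^2(x)h_N^2(y)+g_N^2(y)h_N^2(x)$'s limit, which is polynomially bounded. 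Thus the deterministic prefactors $\|f'b_N-b f'\|$ and $\|NG_N-G\|$ that were used in \eqref{error of drift term}, \eqref{empirical measure term}, \eqref{double integral subterm converge} must now be replaced by $\langle (\text{polynomial}),L_N(s)\rangle$-type quantities, whose $L^p$-norms are controlled by \eqref{eq-p-bound} and which carry a factor $N\|b_N-b\|_{L^\infty}\to0$ or $N\|NG_N-G\|_{L^\infty}\to0$ from \eqref{conv-speed}; the polynomial growth is exactly what makes $\langle |x|^q\cdot(N\|b_N-b\|_\infty),L_N(s)\rangle$ vanish in $L^p$.

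The two genuinely new points are: (i) the terms of the form $\int_0^t\langle\varphi,L_N(s)-\mu_s\rangle\,ds$ with $\varphi$ polynomially growing (e.g. $\varphi=f''(x)G(x,x)$, or $\varphi=f'b$, or $\varphi(x)=\int\frac{f'(x)-f'(y)}{x-y}G(x,y)\mu_s(dy)$) — these converged a.s. in Theorem \ref{Thm-Wishart} by weak convergence plus boundedness, but now we need an $L^p$ statement. Here I would truncate: write $\varphi=\varphi\eta_R+\varphi(1-\eta_R)$ with $\eta_R$ a smooth cutoff equal to $1$ on $[-R,R]$; the bounded part $\varphi\eta_R$ handles as before (bounded continuous test function, so $\langle\varphi\eta_R,L_N(s)-\mu_s\rangle\to0$ a.s. and, being bounded, in $L^p$ by dominated convergence), while the tail $\varphi(1-\eta_R)$ is controlled in $L^1$ by $\mathbb E\langle|\varphi|\mathbf 1_{|x|>R},L_N(s)\rangle+\langle|\varphi|\mathbf 1_{|x|>R},\mu_s\rangle$, which by polynomial growth of $\varphi$ and \eqref{eq-p-bound} (and the corresponding moment bound for $\mu_s$ obtained by passing to the limit) is $\le\varepsilon$ for $R$ large; then raise to the power $p$ using that the truncated integrand is also bounded so one can interpolate, or simply note uniform integrability of $\langle\varphi,L_N(s)\rangle^p$ follows from \eqref{eq-p-bound} with a slightly larger exponent, giving $L^p$ from $L^1$-convergence plus uniform integrability. (ii) The quadratic term $\frac N2\int_0^t\iint\frac{f'(x)-f'(y)}{x-y}G(x,y)[L_N(s)-\mu_s](dx)[L_N(s)-\mu_s](dy)\,ds$ is left inside $Q_t^N(f)$ itself (it is not an error term — it appears in \eqref{eq-Q}), so it does not need to be estimated here; only the pieces that are \emph{subtracted off} in \eqref{eq-2.13'} need $L^p$ control, and these are precisely the four listed above.

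The main obstacle I anticipate is bookkeeping the polynomial growth through the divided-difference $\frac{f'(x)-f'(y)}{x-y}G(x,y)$ and confirming that the product of its polynomial bound with the a priori bound \eqref{eq-p-bound} stays finite for $N\ge cp$ — one must be a little careful that the exponent of the dominating polynomial does not exceed what \eqref{eq-p-bound} allows for the given $p$, which is why the hypothesis is phrased with the linear constraint $N\ge cp$ and a single constant $c$: it lets us absorb any fixed finite increase of the exponent by enlarging $N$. Once the four error pieces are shown to go to $0$ in $L^p$ uniformly in $t$, the martingale part $NM_f^N(t)$ is handled exactly as in Theorem \ref{Thm-Wishart} (its quadratic variation \eqref{quadratic variation of martingale} now involves the polynomially bounded $|f'(x)|^2G(x,x)$, and \eqref{eq-p-bound} again guarantees it is a true martingale and that $\langle NM_{f_1}^N,NM_{f_2}^N\rangle_t\to2\int_0^t\langle f_1'f_2'G(x,x),\mu_s\rangle\,ds$ in $L^1$), so Lemma \ref{Rebolledo Thm} gives the Gaussian limit and hence Theorem \ref{Thm-Wishart} for this larger class of $f$, completing the proof.
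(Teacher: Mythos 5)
Your proposal is correct and follows essentially the same route as the paper's proof: reduce the claim to the key lemma that $\mathbb E\bigl[\sup_{t\le T}\lvert\langle g, L_N(t)-\mu_t\rangle\rvert^p\bigr]\to 0$ for $g$ with polynomially growing derivative, then establish this by a bounded approximation plus the moment bound \eqref{eq-p-bound}. The only difference is the specific approximation device: you use a smooth cutoff $\eta_R$ together with moment/uniform-integrability control of the tails of both $L_N$ and $\mu_t$, whereas the paper first runs a Markov and Borel--Cantelli argument directly from \eqref{eq-p-bound} to conclude that $\mu_t$ has compact support, and then approximates $g$ by the ``squashed'' function $g_\delta(x)=g\bigl(x/(1+\delta x^2)\bigr)$, which makes the $\mu_t$-tail estimate unnecessary and allows the $L_N$-side error to be controlled pathwise via Jensen without appealing to uniform integrability. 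Both devices give the same conclusion, so this is a minor technical variant rather than a genuinely different argument.
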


\begin{proof}
By the analysis in the proof of Theorem \ref{Thm-Wishart},  it suffices to show
\begin{align} \label{eq-2.14}
	\limsup_{N \rightarrow \infty} \mathbb{E} \left[ \sup_{t \in [0,T]} \left| \langle g, L_N(t) \rangle - \langle g, \mu_t \rangle \right|^p \right] = 0,
\end{align}
for $p \ge 1$ and $g\in C^1(\mathbb{R})$ with $ |g'(x)|\le a|x|^{n-1} + b$ for some $a,b \in \mathbb{R}, \ n \in \mathbb{N}_+$.  More precisely, one can check that under the conditions \eqref{eq-p-bound} and \eqref{eq-2.14}, the convergences to 0 in \eqref{error of drift term}, \eqref{empirical measure term} and \eqref{double integral subterm converge} are uniform in $L^p$, and hence $Q_t^N(f) - N M_f^N(t)$ in  \eqref{eq-2.13'} converges to 0 in $L^p$ uniformly.

By Markov inequality and \eqref{eq-p-bound},
\begin{align*}
	\mathbb{P} \left( \sup_{t \in [0,T]} \max_{1 \le i \le N} |\lambda_i^N(t)| > C(T) + 1 \right)
	&\le (C(T) + 1)^{-p} \mathbb{E} \left[ \sup_{t \in [0,T]} \max_{1 \le i \le N} |\lambda_i^N(t)|^p \right] \\
	&\le (C(T) + 1)^{-p} N \mathbb{E} \left[ \sup_{t \in [0,T]} \langle |x|^p, L_N(t) \rangle \right] \\
	&\le N \left( \dfrac{C(T)}{C(T) + 1} \right)^p.
\end{align*}
Choosing $p = \ln^2 N$, we have
\begin{align*}
	\sum_{N=1}^{\infty} \mathbb{P} \left( \sup_{t \in [0,T]} \max_{1 \le i \le N} |\lambda_i^N(t)| > C(T) + 1 \right)
	&\le \sum_{N=1}^{\infty} N \left( \dfrac{C(T)}{C(T) + 1} \right)^p \\
	&= \sum_{N=1}^{\infty} N^{1 + \ln N \ln \frac{C(T)}{C(T)+1}} \\
	&<\infty.
\end{align*}
By Borel-Cantelli lemma, we get that almost surely,
\begin{align*}
	\limsup_{N \rightarrow \infty} \sup_{t \in [0,T]} \max_{1 \le i \le N} \left| \lambda_i^N(t) \right| \le C(T)+1.
\end{align*}
By the proof of Corollary \ref{Thm for bounded general Wishart process}, the limit measure $\{\mu_t, t\in [0,T]\}$ is supported in $[-C(T)-1, C(T)+1]$.

For $g\in C^1(\mathbb{R})$ with $ |g'(x)|\le a|x|^{n-1} + b$   for some $a,b \in \mathbb{R}, \ n \in \mathbb{N}_+$, define 
\begin{align*}
	g_{\delta} (x) = g \left( \dfrac{x}{1 + \delta x^2} \right)
\end{align*}
for $\delta>0.$ Then $g_{\delta}(x)$ is a bounded continuous function, and hence
\begin{align*}
	\lim_{N \rightarrow \infty} \sup_{t \in [0,T]} |\langle g_{\delta}, L_N(t) \rangle - \langle g_{\delta},\mu_t \rangle| = 0,
\end{align*}
almost surely. By dominated convergence theorem,
\begin{align} \label{eq-2.15}
	\lim_{N \rightarrow \infty} \mathbb{E} \left[ \sup_{t \in [0,T]} |\langle g_{\delta}, L_N(t) \rangle - \langle g_{\delta},\mu_t \rangle|^p \right] = 0.
\end{align}

Note that $g'(x)$ grows no faster than polynomials of degree $n-1$, by the mean value theorem, it is not difficult to show  $|g(x) - g_{\delta}(x)| \le C \delta (|x|^{n+2} + |x|^3)$, which implies that $g_{\delta}$ converges to $g$ uniformly in any compact interval as $\delta \rightarrow 0^+$. Thus,
\begin{align} \label{eq-2.16}
	\lim_{\delta \rightarrow 0^+} \sup_{t \in [0,T]} \left| \langle g, \mu_t \rangle - \langle g_{\delta}, \mu_t \rangle \right| = 0.
\end{align}

Finally, by the Jensen's inequality and \eqref{eq-p-bound}, we obtain that,  as $\delta \to 0^+$,
\begin{align} \label{eq-2.17}
	&\quad \mathbb{E} \left[ \sup_{t \in [0,T]} |\langle g, L_N(t) \rangle - \langle g_{\delta}, L_N(t) \rangle|^p \right] \nonumber \\
	&\le C^p \delta^p \mathbb{E} \left[ \sup_{t \in [0,T]} |\langle |x|^{n+2}+|x|^3, L_N(t) \rangle|^p \right] \nonumber \\
	&\le  C^p\delta^p \mathbb{E} \left[ \sup_{t \in [0,T]} |\langle (|x|^{n+2}+|x|^3)^p, L_N(t) \rangle|\right]\nonumber\\
	&\le 2^p C^p (C(T)^{(n+2)p} + C(T)^{3p}) \delta^p\to 0
\end{align}
uniformly in $N\in \mathbb N_+$.

By \eqref{eq-2.15}, \eqref{eq-2.16}, \eqref{eq-2.17} and the triangle inequality, we can obtain \eqref{eq-2.14}, and the proof is concluded.
\end{proof}

\begin{proposition} \label{Prop on linearity of the Gaussian process}
Consider the centered Gaussian family $\{G_t(f), f\in \mathbb F\}$ in Theorem \ref{Thm-Wishart} with covariance
\begin{align*} 
	\mathbb{E} \left[ G_t(f) G_t(g) \right]
	= 2 \int_0^{t} \langle f'(x) g'(x) G(x,x), \mu_u \rangle du, ~~\forall f, g\in\mathbb F.
\end{align*}
We have the following linear property, for $f_1, f_2 \in \mathbb F$ and $\alpha_1, \alpha_2 \in \R$,
\begin{align} \label{linearity of the Gaussian process}
	G_t(\alpha_1 f_1 + \alpha_2 f_2) = \alpha_1 G_t(f_1) + \alpha_2 G_t(f_2), \quad \forall t \in [0,T],
\end{align}
almost surely.
\end{proposition}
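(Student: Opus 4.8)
The plan is to deduce the linear relation from the bilinearity of the covariance functional in \eqref{cov} together with the joint Gaussianity supplied by Theorem \ref{Thm-Wishart}. First I would check that $\mathbb F$ defined in \eqref{space F} is a linear subspace of $C^2_b(\R)$: stability of the first two $L^\infty$ bounds under sums and scalar multiples is immediate from the triangle inequality, and since $G(x,x)=\lim_{N}2N g_N^2(x)h_N^2(x)\ge 0$, the elementary bound $|f_1'(x)f_2'(x)G(x,x)|\le \tfrac12\big((f_1'(x))^2+(f_2'(x))^2\big)G(x,x)$ shows the third bound is also stable under linear combinations. Hence $f:=\alpha_1 f_1+\alpha_2 f_2\in\mathbb F$ whenever $f_1,f_2\in\mathbb F$, so that $G_t(\alpha_1 f_1+\alpha_2 f_2)$ is well defined as a member of the Gaussian family.

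Next, apply Theorem \ref{Thm-Wishart} to the finite family $(f_1,f_2,f)$ with $f=\alpha_1 f_1+\alpha_2 f_2$: the triple $\big(G_t(f_1),G_t(f_2),G_t(f)\big)_{t\in[0,T]}$ is a centered Gaussian process whose covariance at a fixed time $t$ is given by \eqref{cov}, i.e. $\E[G_t(\varphi)G_t(\psi)]=2\int_0^t\langle \varphi'(x)\psi'(x)G(x,x),\mu_u\rangle\,du$ for $\varphi,\psi$ ranging over $\{f_1,f_2,f\}$. Set $D_t:=G_t(f)-\alpha_1G_t(f_1)-\alpha_2G_t(f_2)$. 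Then $D_t$ is a centered Gaussian random variable, and expanding $\E[D_t^2]$ by bilinearity of the covariance yields
\begin{align*}
\E[D_t^2]=2\int_0^t\Big\langle\big(f'(x)-\alpha_1 f_1'(x)-\alpha_2 f_2'(x)\big)^2\,G(x,x),\mu_u\Big\rangle\,du=0,
\end{align*}
because $f'=\alpha_1 f_1'+\alpha_2 f_2'$ pointwise. A centered Gaussian variable with vanishing variance is $0$ almost surely, so $D_t=0$ a.s. for each fixed $t\in[0,T]$.

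Finally, to upgrade "for each fixed $t$" to "for all $t$ simultaneously", I would use that the limit process has continuous sample paths (the convergence in Theorem \ref{Thm-Wishart} takes place in a path space of continuous functions on $[0,T]$; alternatively continuity follows from the fact that the covariance in \eqref{cov} is continuous in $(t,s)$ via a Kolmogorov-type argument for Gaussian processes). Then the event $\{D_q=0\text{ for all }q\in\mathbb Q\cap[0,T]\}$ has probability one, and by continuity of $t\mapsto D_t$ it forces $D_t=0$ for all $t\in[0,T]$ on that event, which is exactly \eqref{linearity of the Gaussian process}. I do not expect a genuine obstacle here; the only points that require care are verifying that $\mathbb F$ is a vector space and the passage from fixed-time to all-time identities, both routine. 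An alternative, equally short route bypasses the covariance computation: the martingales in \eqref{martingale term} satisfy $M^N_{\alpha_1 f_1+\alpha_2 f_2}=\alpha_1 M^N_{f_1}+\alpha_2 M^N_{f_2}$ identically in $N$ since $f\mapsto f'$ is linear, so the exact linear relation among $\big(NM^N_{f_1},NM^N_{f_2},NM^N_{\alpha_1 f_1+\alpha_2 f_2}\big)$ is inherited by their joint distributional limit, whose law is therefore supported on the closed hyperplane $\{z=\alpha_1 x+\alpha_2 y\}$.
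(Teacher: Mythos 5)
Your main argument is correct but takes a genuinely different route from the paper's. You compute the variance of $D_t := G_t(\alpha_1 f_1 + \alpha_2 f_2) - \alpha_1 G_t(f_1) - \alpha_2 G_t(f_2)$ at a fixed $t$, show it vanishes by bilinearity of the covariance formula \eqref{cov} together with $(\alpha_1 f_1 + \alpha_2 f_2)' = \alpha_1 f_1' + \alpha_2 f_2'$, conclude $D_t = 0$ a.s.\ for each $t$, and then pass to all $t$ simultaneously via a countable dense set plus path continuity. This works, and your verification that $\mathbb F$ is a vector space (the AM--GM step using $G(x,x)\ge 0$) is more explicit than the paper's "it is easy to check." The paper's proof is exactly your "alternative" closing remark: it uses the identity $M^N_{\alpha_1 f_1 + \alpha_2 f_2} = \alpha_1 M^N_{f_1} + \alpha_2 M^N_{f_2}$ (exact linearity of $f\mapsto M^N_f$ at every finite $N$), so that the prelimit process $\alpha_1 N M^N_{f_1} + \alpha_2 N M^N_{f_2} - N M^N_{\alpha_1 f_1 + \alpha_2 f_2}$ is identically zero, and the distributional limit in $C([0,T];\R^3)$ of the zero process is the zero process. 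What each approach buys: the paper's route is shorter and delivers the "for all $t\in[0,T]$" conclusion in one stroke, with no separate continuity/rationals argument needed; your covariance route is more self-contained (it needs only the limit law, not the mechanism of the convergence), which could be useful if one only knew the covariance structure and not the underlying martingale representation. Either is acceptable, but you should be aware that the covariance route, as stated, does rely on the auxiliary observation that the limit in Theorem \ref{Thm-Wishart} lives in a space of continuous paths (which it does, via Rebolledo/Lemma \ref{Rebolledo Thm}), so that fixed-time a.s.\ equalities upgrade to a path identity; the paper's route needs no such step.
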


\begin{proof}
For $f_1, f_2 \in \mathbb{F}$ and $\alpha_1, \alpha_2 \in \R$, it is easy to check that $\alpha_1 f_1 + \alpha_2 f_2 \in \mathbb{F}$. By the proof of Theorem \ref{Thm-Wishart}, the random vector $(N M_{f_1}^N(t), N M_{f_2}^N(t), N M_{\alpha_1 f_1 + \alpha_2 f_2}^N(t))_{t \in [0,T]}$ converges in distribution to $(G_t(f_1), G_t(f_2), G_t(\alpha_1 f_1 + \alpha_2 f_2))_{t \in [0,T]}$. Hence,  the linear combination $(\alpha_1 N M_{f_1}^N(t) + \alpha_2 N M_{f_2}^N(t) - N M_{\alpha_1 f_1 + \alpha_2 f_2}^N(t))_{t \in [0,T]}$ converges in distribution to $(\alpha_1 G_t(f_1) + \alpha_2 G_t(f_2) - G_t(\alpha_1 f_1 + \alpha_2 f_2))_{t \in [0,T]}$.

By \eqref{martingale term}, we can see that the martingale $M_f^N(t)$ is linear with respect to the function $f$, so $\alpha_1 N M_{f_1}^N(t) + \alpha_2 N M_{f_2}^N(t) = N M_{\alpha_1 f_1 + \alpha_2 f_2}^N(t)$ for all $t \in [0,T]$ and all $N \in \mathbb{N}$, which implies that the process $(\alpha_1 N M_{f_1}^N(t) + \alpha_2 N M_{f_2}^N(t) - N M_{\alpha_1 f_1 + \alpha_2 f_2}^N(t))_{t \in [0,T]}$ is actually a zero process. Thus, as the limit of the convergence in distribution, $(\alpha_1 G_t(f_1) + \alpha_2 G_t(f_2) - G_t(\alpha_1 f_1 + \alpha_2 f_2))_{t \in [0,T]}$ is also a zero process, which implies \eqref{linearity of the Gaussian process}.
\end{proof}

\subsection{Central limit theorem for particle systems}
\label{sec:particle}
In this subsection, we provide the central limit theorem for the empirical measure of the following particle system: for $1\le i\le N$,
\begin{align} \label{SDE-particle}
	dx_i^N(t) = \sigma^N(x_i^N(t)) dW_i(t)+ \left( b_N(x_i^N(t)) + \sum_{j:j\neq i} \dfrac{H_N(x_i^N(t), x_j^N(t))}{x_i^N(t) - x_j^N(t)} \right) dt, \ t \ge 0,
\end{align}
with $H_N(x,y)$ being a symmetric function. This particle system was introduced in \cite{Graczyk2014} as a generalization of \eqref{eigenvalue SDE}.  Under proper conditions, the existence and
uniqueness of the non-colliding strong solution was obtained in \cite{Graczyk2014}, and it was shown in \cite{Song2019} that the family of empirical measure $\{L_N(t), t \in [0,T]\}$ is tight almost surely, and any limit $\{\mu_t, t \in [0,T]\}$ satisfies
\begin{align*} 
	\int \dfrac{\mu_t(dx)}{z-x}
	=& \int \dfrac{\mu_0(dx)}{z-x} + \int_{0}^t \left[ \int \dfrac{b(x)}{(z-x)^2} \mu_s(dx) \right] ds +\int_{0}^t \left[ \int \dfrac{\sigma(x)^2}{(z-x)^3} \mu_s(dx) \right] ds\notag \\
	&+ \int_{0}^t \left[ \iint \dfrac{H(x,y)}{(z-x) (z-y)^2} \mu_s(dx) \mu_s(dy) \right] ds, ~~ \forall z\in \mathbb C\setminus \R,
\end{align*}
where, $b(x)$, $\sigma(x)$ and $H(x,y)$ are the uniform limits of $b_N(x)$, $\sigma^N(x)$ and $NH_N(x,y)$, respectively.

Now we adopt the following set of test functions
\begin{align}
	\widetilde{\mathbb F}
	&= \Bigg\{ f \in C_b^2(\mathbb{R}): \| f'(x) b(x) \|_{L^{\infty}(\mathbb{R})} < \infty, ~~\left\| \dfrac{f'(x) - f'(y)}{x-y} H(x,y) \right\|_{L^{\infty}(\mathbb{R}^2)} < \infty \notag\\ &\qquad \| f'(x) \tilde{\sigma}(x) \|_{L^{\infty}(\mathbb{R})} < \infty, ~~ \| f''(x) \tilde{\sigma}^2(x) \|_{L^{\infty}(\mathbb{R})} < \infty \Bigg\},\label{space F'}
\end{align}
where $\tilde{\sigma}(x)$ is the uniform limit of $\sqrt{N} \sigma^N(x)$. Considering the centered fluctuation process, for $f \in \widetilde{\mathbb F}$, 
\begin{align*}
	\widetilde{Q}_t^N(f) &= \mathcal{L}_t^N(f) - \mathcal{L}_0^N(f) - \int_{0}^t \mathcal{L}_s^N(f'b) ds - \dfrac{1}{2} \int_{0}^t \langle f''(x) (\tilde{\sigma}^2(x) - H(x,x)), \mu_s \rangle ds \\
	&\quad- \int_0^t \mathcal{L}_s^N\left( \int \dfrac{f'(x) - f'(y)}{x - y} H(x,y) \mu_s(dx) \right) ds \\
	&\quad- \dfrac{N}{2} \int_{0}^t \iint \dfrac{f'(x) - f'(y)}{x - y} H(x,y) [L_N(s)(dx) - \mu_s(dx)] [L_N(s)(dy) - \mu_s(dy)] ds,
\end{align*}
as an extension of Theorem \ref{Thm-Wishart}, we have the following result. 
\begin{theorem} \label{Thm-particle}
Suppose that the limit functions $\tilde{\sigma}(x)$, $b(x)$ and $H(x,y)$ are continuous and the following conditions hold, 
\begin{align}\label{conv-speed-particle system}
\begin{aligned}
	\lim_{N \rightarrow \infty} N \|b_N(x) - b(x)\|_{L^{\infty}(\mathbb{R})} = 0, \\
	\lim_{N \rightarrow \infty} N \|NH_N(x,y) - H(x,y)\|_{L^{\infty}(\mathbb{R}^2)} = 0.
\end{aligned}
\end{align}
Also assume that \eqref{SDE-particle} has a non-exploding and non-colliding strong solution, such that the sequence of the empirical measures  $\{L_N(t), t \in [0,T]\}_{N \in \mathbb{N}}$ converges weakly to $\{\mu_t, t\in [0, T]\}$.

Then, for any $k \in \mathbb{N}$ and any $f_1, \ldots, f_k \in \widetilde{\mathbb F}$, $(\widetilde{Q}_t^N(f_1), \ldots, \widetilde{Q}_t^N(f_k))_{t \in [0,T]}$ converges in distribution to a centered Gaussian process $(\widetilde G_t(f_1), \ldots, \widetilde G_t(f_k))_{t \in [0,T]}$ with covariance 
\begin{align*}
	\mathbb{E} \left[ \widetilde G_t(f_i) \widetilde G_s(f_j) \right]
	= \int_0^{t \wedge s} \langle f_i'(x) f_j'(x) \tilde{\sigma}^2(x), \mu_u \rangle du, ~~1 \le i,j \le k.
\end{align*}
\end{theorem}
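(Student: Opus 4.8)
The plan is to follow the proof of Theorem~\ref{Thm-Wishart} almost verbatim, replacing the pair kernel $G$ by $H$ and tracking the extra contributions of the individual diffusion coefficient $\sigma^N$. First I would apply It\^o's formula to $\langle f,L_N(t)\rangle$ for the system~\eqref{SDE-particle}. Setting
\[
\widetilde M_f^N(t)=\frac1N\sum_{i=1}^N\int_0^t f'(x_i^N(s))\,\sigma^N(x_i^N(s))\,dW_i(s),
\]
symmetrizing the drift double sum via the symmetry of $H_N$, and using the convention $\frac{f'(x)-f'(y)}{x-y}=f''(x)$ on the diagonal, one obtains
\begin{align*}
\langle f,L_N(t)\rangle
&=\langle f,L_N(0)\rangle+\widetilde M_f^N(t)+\int_0^t\langle f'b_N,L_N(s)\rangle\,ds
+\frac12\int_0^t\langle f''((\sigma^N)^2-H_N(x,x)),L_N(s)\rangle\,ds\\
&\quad+\frac N2\int_0^t\iint\frac{f'(x)-f'(y)}{x-y}H_N(x,y)\,L_N(s)(dx)\,L_N(s)(dy)\,ds,
\end{align*}
with $\langle\widetilde M_f^N\rangle_t=\frac1N\int_0^t\langle(f')^2(\sigma^N)^2,L_N(s)\rangle\,ds$. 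Since $\sqrt N\sigma^N\to\tilde\sigma$ uniformly forces $\sigma^N\to0$, passing to the limit $N\to\infty$ in this identity — by the argument of the proof of Theorem~2.2 in \cite{Song2019}, now for a general $f\in\widetilde{\mathbb F}$ rather than only $f(x)=(z-x)^{-1}$ — yields for $f\in\widetilde{\mathbb F}$
\[
\langle f,\mu_t\rangle=\langle f,\mu_0\rangle+\int_0^t\langle f'b,\mu_s\rangle\,ds+\frac12\int_0^t\iint\frac{f'(x)-f'(y)}{x-y}H(x,y)\,\mu_s(dx)\,\mu_s(dy)\,ds,
\]
the $\sigma^N$-term disappearing because both $(\sigma^N)^2$ and $H_N(x,x)$ tend to $0$.

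Next I would subtract the two identities, multiply by $N$, and decompose the resulting expression for $\mathcal{L}_t^N(f)=N\langle f,L_N(t)-\mu_t\rangle$ exactly as in \eqref{measure error}--\eqref{double integral subterm converge}. The drift term yields $\int_0^t\mathcal{L}_s^N(f'b)\,ds$ up to an error bounded by $NT\|f'\|_{L^\infty}\|b_N-b\|_{L^\infty}\to0$ by \eqref{conv-speed-particle system}. For the second-order term I would use the decomposition
\[
N\big((\sigma^N)^2-H_N(x,x)\big)=\big(N(\sigma^N)^2-\tilde\sigma^2\big)-\big(NH_N(x,x)-H(x,x)\big)+\big(\tilde\sigma^2-H(x,x)\big).
\]
The first two summands, multiplied by $f''$, vanish uniformly on $\R$ by the uniform convergences $\sqrt N\sigma^N\to\tilde\sigma$, $NH_N\to H$ and the boundedness conditions defining $\widetilde{\mathbb F}$, while the last converges to $\frac12\int_0^t\langle f''(\tilde\sigma^2-H(x,x)),\mu_s\rangle\,ds$ by the weak convergence $L_N(s)\to\mu_s$ and the continuity and boundedness of $f''(\tilde\sigma^2-H(x,x))$. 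The double-integral term splits, using the symmetry of $\frac{f'(x)-f'(y)}{x-y}H(x,y)$, into the $[NH_N-H]$ piece (controlled by $\frac{NT}2\|\frac{f'(x)-f'(y)}{x-y}\|_{L^\infty}\|NH_N-H\|_{L^\infty}\to0$), the quadratic piece $\frac N2\iint\frac{f'(x)-f'(y)}{x-y}H(x,y)[L_N-\mu](dx)[L_N-\mu](dy)$, and a cross piece equal to $\int_0^t\mathcal{L}_s^N\big(\int\frac{f'(x)-f'(y)}{x-y}H(x,y)\mu_s(dx)\big)\,ds$. Gathering the pieces shows that $\widetilde Q_t^N(f)-N\widetilde M_f^N(t)\to0$ almost surely, uniformly on $[0,T]$, and — the controlling integrands being bounded — also in $L^p$ for every $p\ge1$.

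It then remains to identify the limit of $(N\widetilde M_{f_1}^N,\dots,N\widetilde M_{f_k}^N)_{t\in[0,T]}$ via the martingale central limit theorem, Lemma~\ref{Rebolledo Thm}. The condition $\|f'\tilde\sigma\|_{L^\infty}<\infty$ makes $\langle(f')^2(\sigma^N)^2,L_N\rangle=\frac1N\langle(f')^2N(\sigma^N)^2,L_N\rangle$ uniformly bounded, so each $N\widetilde M_f^N$ is a genuine square-integrable martingale (not merely local). Its cross-brackets are
\[
\langle N\widetilde M_{f_i}^N,N\widetilde M_{f_j}^N\rangle_t=\int_0^t\langle f_i'f_j'\,N(\sigma^N)^2,L_N(s)\rangle\,ds,
\]
and writing $N(\sigma^N)^2=(N(\sigma^N)^2-\tilde\sigma^2)+\tilde\sigma^2$, the first part tends to $0$ a.s.\ and in $L^p$ by the uniform convergence of $\sqrt N\sigma^N$ and the boundedness of $f_i'f_j'$, while the second converges a.s.\ and in $L^p$ to $\int_0^t\langle f_i'f_j'\tilde\sigma^2,\mu_s\rangle\,ds$ by the weak convergence of $L_N(s)$ and the boundedness of $f_i'f_j'\tilde\sigma^2$ (a consequence of $\|f'\tilde\sigma\|_{L^\infty}<\infty$). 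Lemma~\ref{Rebolledo Thm} then gives joint convergence in distribution to a continuous centered Gaussian process with covariance $\int_0^{t\wedge s}\langle f_i'f_j'\tilde\sigma^2,\mu_u\rangle\,du$, and adding back the negligible term $\widetilde Q_t^N(f_i)-N\widetilde M_{f_i}^N(t)$ finishes the proof.

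The only step I expect to demand genuine care beyond what is already present in the proof of Theorem~\ref{Thm-Wishart} is the bookkeeping of the two scales of $\sigma^N$: at scale $O(N^{-1/2})$ the coefficient $\sigma^N$ drops out of the deterministic limit equation, whereas $(\sqrt N\sigma^N)^2$ survives and contributes both the drift correction $\frac12\langle f''\tilde\sigma^2,\mu_s\rangle$ and the limiting variance. Concretely, the delicate points are the passage from $\sqrt N\sigma^N\to\tilde\sigma$ in $L^\infty(\R)$ to $f''\cdot N(\sigma^N)^2\to f''\tilde\sigma^2$ in $L^\infty(\R)$ — handled through the factorization $N(\sigma^N)^2-\tilde\sigma^2=(\sqrt N\sigma^N-\tilde\sigma)(\sqrt N\sigma^N+\tilde\sigma)$ together with the boundedness conditions of $\widetilde{\mathbb F}$ — and the verification that $N\widetilde M_f^N$ is a true, and not merely local, martingale.
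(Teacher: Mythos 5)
The paper omits the proof of Theorem~\ref{Thm-particle}, stating only that it is similar to that of Theorem~\ref{Thm-Wishart}, and your argument carries out precisely that adaptation: the It\^{o}-formula identity with $H_N$ symmetrized, the appearance of $\frac12 f''((\sigma^N)^2-H_N(x,x))$ from combining the It\^{o} correction with the diagonal of the double integral, the same decomposition of $N\langle f, L_N-\mu\rangle$ into drift, second-order, and bilinear pieces, and Rebolledo's theorem for $N\widetilde M_f^N$. You also correctly isolate the genuinely new point — the two-scale bookkeeping of $\sigma^N$, handled via the factorization $N(\sigma^N)^2-\tilde\sigma^2=(\sqrt N\sigma^N-\tilde\sigma)(\sqrt N\sigma^N+\tilde\sigma)$ together with the $\|f'\tilde\sigma\|_{L^\infty}<\infty$ and $\|f''\tilde\sigma^2\|_{L^\infty}<\infty$ conditions built into $\widetilde{\mathbb F}$ — so the proof is correct and follows the paper's intended route.
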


Results analogous to Corollary \ref{Thm for bounded general Wishart process}, Corollary \ref{Coro-Lp} and Proposition \ref{Prop on linearity of the Gaussian process} are as follows.

\begin{corollary} \label{Thm for bounded particle system}
 Assume the same conditions as in Theorem \ref{Thm-particle}.
Moreover, for $T < \infty$, assume that
\begin{align*}
	\limsup_{N \rightarrow \infty} \sup_{t \in [0,T]} \max_{1 \le i \le N} \left| x_i^N(t) \right| \le C(T),
\end{align*}
almost surely for some constant $C(T)$ depending on $T$. Then Theorem \ref{Thm-particle} still holds if the set $\,\widetilde{\mathbb{F}}$ of test function  is replaced by $C^2(\mathbb{R})$.
\end{corollary}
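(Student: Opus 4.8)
The plan is to localize the problem onto a fixed compact interval by means of a smooth cut-off function, thereby reducing everything to Theorem \ref{Thm-particle}, in exactly the same way as in the proof of Corollary \ref{Thm for bounded general Wishart process}.

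First I would use the a.s. bound $\limsup_{N\to\infty}\sup_{t\in[0,T]}\max_{1\le i\le N}|x_i^N(t)|\le C(T)$ to produce, with $A(T):=C(T)+1$, a measurable set $A\subseteq\Omega$ with $\mathbb P(A)=1$ and an a.s. finite random index $N_0$ such that, for $\omega\in A$ and every $N\ge N_0(\omega)$, the empirical measure $L_N(t)(\omega)$ is supported in $[-A(T),A(T)]$ for all $t\in[0,T]$; consequently the limiting measure $\mu_t$ is supported in $[-A(T),A(T)]$ as well.

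Next I would fix $k\in\mathbb N$ and $f_1,\dots,f_k\in C^2(\mathbb R)$ and choose, via \cite[1.46]{Rudin1991}, a cut-off $\eta\in C^\infty(\mathbb R)$ with $\eta\equiv 1$ on $[-A(T),A(T)]$ and support contained in $[-2A(T),2A(T)]$. Each $f_i\eta$ is a compactly supported $C^2$ function and therefore lies in $\widetilde{\mathbb F}$: the $L^\infty$-norms in \eqref{space F'} are all finite because $(f_i\eta)'$ and $(f_i\eta)''$ are bounded with compact support and $b,\tilde\sigma,H$ are continuous --- for the difference-quotient norm one uses that $\frac{(f_i\eta)'(x)-(f_i\eta)'(y)}{x-y}$ is bounded by $\|(f_i\eta)''\|_{L^\infty}$ and vanishes unless $x$ or $y$ lies in the support of $\eta$, so that only the values of $H$ on a compact set are relevant. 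Since $\eta\equiv 1$ and hence $\eta'\equiv\eta''\equiv 0$ on $[-A(T),A(T)]$, the functions $f_i,f_i',f_i''$ coincide there with $f_i\eta,(f_i\eta)',(f_i\eta)''$. Therefore, on the event $A$ and for $N\ge N_0$, every term in $\widetilde Q_t^N(f_i)$ --- which involves $f_i,f_i',f_i''$ only at the particle locations $x_j^N(t)\in[-A(T),A(T)]$ and through integrals against $L_N(s)$ and $\mu_s$, both supported in $[-A(T),A(T)]$ --- is unchanged upon replacing $f_i$ by $f_i\eta$; the same holds for the associated martingale $M_{f_i}^N$, which enters only through $f_i'$ evaluated at the particle locations. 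Thus $\widetilde Q_t^N(f_i)=\widetilde Q_t^N(f_i\eta)$ and $NM_{f_i}^N(t)=NM_{f_i\eta}^N(t)$ for all $t\in[0,T]$, on $A$, for $N\ge N_0$.

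Finally, since $f_1\eta,\dots,f_k\eta\in\widetilde{\mathbb F}$, Theorem \ref{Thm-particle} applies and yields that $(\widetilde Q_t^N(f_i\eta))_{1\le i\le k,\,t\in[0,T]}$ converges in distribution to a centered Gaussian process with covariance $\int_0^{t\wedge s}\langle(f_i\eta)'(f_j\eta)'\tilde\sigma^2,\mu_u\rangle\,du$. Because $(\widetilde Q_t^N(f_i))_{i,t}$ and $(\widetilde Q_t^N(f_i\eta))_{i,t}$ agree off the event $A^c\cup\{N<N_0\}$, whose probability tends to $0$, they differ by a sequence tending to $0$ in probability, so Slutsky's theorem shows they share the same limit in distribution; and because $\mu_u$ is supported in $[-A(T),A(T)]$, where $(f_i\eta)'=f_i'$, the covariance simplifies to $\int_0^{t\wedge s}\langle f_i'f_j'\tilde\sigma^2,\mu_u\rangle\,du$, which is exactly the covariance claimed in Theorem \ref{Thm-particle}, now for $f_i\in C^2(\mathbb R)$. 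This gives the corollary. I do not expect any genuine obstacle here; the only point that needs a little care is that the localizing index $N_0$ is random, and this is dealt with by observing that the exceptional events have vanishing probability, so that passage to the distributional limit is unaffected.
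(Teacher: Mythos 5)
Your proposal follows exactly the localization-by-cutoff strategy of the paper's proof of Corollary~\ref{Thm for bounded general Wishart process}, to which the paper explicitly defers for this statement: the a.s.\ bound produces the full-measure event $A$ and the random index $N_0$; the cut-off $\eta$ reduces matters to Theorem~\ref{Thm-particle}; one observes $\widetilde{Q}_t^N(f_i)=\widetilde{Q}_t^N(f_i\eta)$ and $NM_{f_i}^N=NM_{f_i\eta}^N$ on $A\cap\{N\ge N_0\}$; and a Slutsky-type passage and the identification of the covariance on the support of $\mu_u$ finish the proof. This is precisely what the paper intends.

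One sub-step of your justification that $f_i\eta\in\widetilde{\mathbb F}$ is not quite right. You claim that $\frac{(f_i\eta)'(x)-(f_i\eta)'(y)}{x-y}$ ``vanishes unless $x$ or $y$ lies in the support of $\eta$, so that only the values of $H$ on a compact set are relevant.'' The first part is true, but the set where the quotient can be nonzero is the union of two unbounded strips $\{x\in[-2A(T),2A(T)]\}\cup\{y\in[-2A(T),2A(T)]\}$, which is not compact, so boundedness of a merely continuous $H$ there is not automatic. What actually controls the estimate off a large square is the extra decay in the quotient: if, say, $x\in[-2A(T),2A(T)]$ and $y$ is far outside, then $(f_i\eta)'(y)=0$ and the quotient equals $(f_i\eta)'(x)/(x-y)$, which decays like $1/|y|$; this kills $H$ provided $H$ grows at most linearly in each variable, as it does in all the paper's applications (Wishart $H(x,y)=x+y$, Dyson and OU $H$ constant). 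For a general continuous $H$ the inclusion $f_i\eta\in\widetilde{\mathbb F}$ could fail. The paper itself simply asserts $f\eta\in\mathbb F$ without verification in the proof of Corollary~\ref{Thm for bounded general Wishart process}, so this is a shared omission rather than a flaw unique to your write-up, but it is worth making the required at-most-linear growth of $H$ explicit rather than appealing to compactness.
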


{
\begin{corollary} \label{Coro-Lp-particle}  Assume the same conditions as in Theorem \ref{Thm-particle}. For $T < \infty$ and all $p \ge 1$, assume that
\begin{align*}
	\mathbb{E} \left[ \sup_{t \in [0,T]} \langle |x|^p, L_N(t) \rangle \right] \le C(T)^p,
\end{align*}
for some positive constant $C(T)$ which depends only on $T$. Furthermore, assume that $(\frac{1}{2} \tilde{\sigma}(x)^2 - H(x,x)) f''(x)$ and its derivative have at most polynomial growth.  Then for $f\in C^3(\mathbb{R})$ of which the derivatives have at most polynomial growth, $\widetilde{Q}_t^N(f) - N M_f^N(t)$ converges to $0$ in $L^p$ for all $p \ge 1$ uniformly in $t \in [0,T]$.
	\end{corollary}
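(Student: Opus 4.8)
The plan is to follow the proof of Corollary \ref{Coro-Lp} almost verbatim, with the eigenvalues $\lambda_i^N$ replaced by the particles $x_i^N$ and the coefficient functions adjusted: in the decomposition of $\widetilde Q_t^N(f)-NM_f^N(t)$ established along the lines of the proof of Theorem \ref{Thm-particle}, the role played by $G(x,x)$ in the Wishart case is now played by $\tilde\sigma(x)^2-H(x,x)$ in the finite-variation correction and by $\tilde\sigma(x)^2$ in the martingale bracket, while $NG_N$ is replaced by $NH_N$. As in the proof of Corollary \ref{Coro-Lp}, it suffices to establish
\begin{align*}
\limsup_{N\to\infty}\mathbb E\left[\sup_{t\in[0,T]}\left|\langle g,L_N(t)\rangle-\langle g,\mu_t\rangle\right|^p\right]=0
\end{align*}
for every $p\ge1$ and every $g\in C^1(\mathbb R)$ with $|g'(x)|\le a|x|^{n-1}+b$ for some $a,b\in\mathbb R$, $n\in\mathbb N_+$. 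Granting this, the polynomial-growth hypotheses on the derivatives of $f$ and on $(\tfrac12\tilde\sigma(x)^2-H(x,x))f''(x)$, together with the moment bound on $\langle|x|^p,L_N(t)\rangle$, make each error term in that decomposition (the analogues of \eqref{error of drift term}, \eqref{empirical measure term} and \eqref{double integral subterm converge}) converge to $0$ in $L^p$ uniformly in $t\in[0,T]$, exactly as in the proof of Corollary \ref{Coro-Lp}.

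To prove the displayed convergence, first apply the Markov inequality and the moment assumption to get $\mathbb P(\sup_{t\in[0,T]}\max_{1\le i\le N}|x_i^N(t)|>C(T)+1)\le N\bigl(C(T)/(C(T)+1)\bigr)^p$; choosing $p=\ln^2 N$ and invoking the Borel--Cantelli lemma yields $\limsup_N\sup_{t\in[0,T]}\max_{1\le i\le N}|x_i^N(t)|\le C(T)+1$ almost surely, whence, as in the proof of Corollary \ref{Thm for bounded general Wishart process}, the limit $\{\mu_t,t\in[0,T]\}$ is supported in $[-C(T)-1,C(T)+1]$. Next introduce the truncation $g_\delta(x)=g\bigl(x/(1+\delta x^2)\bigr)$, which is bounded and continuous; the a.s.\ weak convergence of $L_N(t)$ to $\mu_t$ and dominated convergence give $\mathbb E[\sup_t|\langle g_\delta,L_N(t)\rangle-\langle g_\delta,\mu_t\rangle|^p]\to0$ for each fixed $\delta$. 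The mean value theorem gives $|g(x)-g_\delta(x)|\le C\delta(|x|^{n+2}+|x|^3)$, so $g_\delta\to g$ uniformly on compacts, hence $\sup_t|\langle g,\mu_t\rangle-\langle g_\delta,\mu_t\rangle|\to0$ as $\delta\to0^+$; and Jensen's inequality together with the moment bound gives $\mathbb E[\sup_t|\langle g,L_N(t)\rangle-\langle g_\delta,L_N(t)\rangle|^p]\le 2^pC^p(C(T)^{(n+2)p}+C(T)^{3p})\delta^p$, which tends to $0$ as $\delta\to0^+$ uniformly in $N$. A triangle inequality over these three contributions then closes the argument.

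The only point requiring attention — and it is not a serious obstacle — is the bookkeeping in the first paragraph: one must revisit the proof of Theorem \ref{Thm-particle} and check that every intermediate estimate there can be promoted to a uniform-in-$t$ bound in $L^p$ under the present hypotheses. The decomposition of $\widetilde Q_t^N(f)-NM_f^N(t)$ differs from the Wishart one only in that the finite-variation correction carries the coefficient $\tfrac12(\tilde\sigma(x)^2-H(x,x))$ rather than $\tfrac12 G(x,x)$, and that the quadratic-variation density of $NM_f^N$ is $\langle (f')^2\tilde\sigma^2,L_N(s)\rangle$ rather than $2\langle (f')^2G(x,x),L_N(s)\rangle$; the polynomial-growth assumption is imposed precisely on $(\tfrac12\tilde\sigma(x)^2-H(x,x))f''(x)$ and its derivative so that the analogue of \eqref{empirical measure term} is controlled, while the terms involving $NH_N-H$ are dealt with via \eqref{conv-speed-particle system} exactly as \eqref{double integral subterm converge} and \eqref{error of drift term} were dealt with via \eqref{conv-speed}. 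With these identifications, the remaining steps are word for word those of the proof of Corollary \ref{Coro-Lp}, and the consequence for Theorem \ref{Thm-particle} with the enlarged test-function class follows as before.
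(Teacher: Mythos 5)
Your proposal is correct and takes exactly the approach the paper intends: the paper itself states that the proof of Corollary~\ref{Coro-Lp-particle} is analogous to that of Corollary~\ref{Coro-Lp} and omits it, and your adaptation—replacing $G(x,x)$ by $\tilde\sigma^2-H(x,x)$ in the finite-variation correction, $2G(x,x)$ by $\tilde\sigma^2$ in the bracket, $NG_N$ by $NH_N$, and then reducing to the moment estimate \eqref{eq-2.14} via truncation $g_\delta$ and the Borel--Cantelli/Markov argument—is a faithful transcription. One small item you passed over without comment but which does not cause trouble: the error term $\tfrac12\langle f''\bigl(N(\sigma^N)^2-\tilde\sigma^2\bigr),L_N\rangle$ has no rate assumption analogous to \eqref{conv-speed-particle system}, but it carries no extra factor of $N$ (unlike the $b_N$ and $H_N$ errors), so the mere uniform convergence $N(\sigma^N)^2\to\tilde\sigma^2$ together with the moment bound \eqref{eq-p-bound} already yields the needed uniform $L^p$ control.
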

}

\begin{proposition} \label{Prop on linearity of the Gaussian process-particle system}
Consider the centered Gaussian family $\{\widetilde G_t(f), f\in \widetilde{\mathbb F}\}$ with covariance
\begin{align*} 
	\mathbb{E} \left[\widetilde G_t(f) \widetilde G_t(g) \right]
	= \int_0^{t} \langle f'(x) g'(x) \tilde\sigma^2(x), \mu_u \rangle du, ~~\forall f, g\in \widetilde{\mathbb F}. 
\end{align*}
We have the following linear property, for $f_1, f_2 \in {\widetilde{\mathbb F}}$ and $\alpha_1, \alpha_2 \in \R$,
\begin{align*}
	\widetilde G_t(\alpha_1 f_1 + \alpha_2 f_2) = \alpha_1 \widetilde G_t(f_1) + \alpha_2 \widetilde G_t(f_2), \quad \forall t \in [0,T],
\end{align*}
almost surely.
\end{proposition}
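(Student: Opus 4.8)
The plan is to follow, essentially verbatim, the proof of Proposition \ref{Prop on linearity of the Gaussian process}, with the martingale $M_f^N$ of \eqref{martingale term} replaced by its analogue for the particle system \eqref{SDE-particle}. First I would record that $\widetilde{\mathbb F}$ is a linear space: for $f_1,f_2\in\widetilde{\mathbb F}$ and $\alpha_1,\alpha_2\in\R$, each of the four $L^\infty$-norms appearing in \eqref{space F'} is subadditive, so $\alpha_1 f_1+\alpha_2 f_2$ again lies in $\widetilde{\mathbb F}$ and hence $\widetilde G_t(\alpha_1 f_1+\alpha_2 f_2)$ is well defined.

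Next, recall from the proof of Theorem \ref{Thm-particle} that $\widetilde Q_t^N(f)-NM_f^N(t)\to 0$ uniformly on $[0,T]$ (almost surely, and in $L^p$), where by It\^o's formula applied to \eqref{SDE-particle} the martingale term is
\[
  M_f^N(t)=\frac1N\sum_{i=1}^N\int_0^t f'(x_i^N(s))\,\sigma^N(x_i^N(s))\,dW_i(s),
\]
and the Gaussian limit $\widetilde G_\cdot(f)$ is the distributional limit of $NM_f^N$. Applying Theorem \ref{Thm-particle} to the three test functions $f_1$, $f_2$ and $\alpha_1 f_1+\alpha_2 f_2$, all in $\widetilde{\mathbb F}$, gives joint convergence in distribution of the triple $\big(NM_{f_1}^N(t),\,NM_{f_2}^N(t),\,NM_{\alpha_1 f_1+\alpha_2 f_2}^N(t)\big)_{t\in[0,T]}$ to $\big(\widetilde G_t(f_1),\,\widetilde G_t(f_2),\,\widetilde G_t(\alpha_1 f_1+\alpha_2 f_2)\big)_{t\in[0,T]}$; applying the continuous linear map $(a,b,c)\mapsto\alpha_1 a+\alpha_2 b-c$ then yields
\[
  \big(\alpha_1 NM_{f_1}^N(t)+\alpha_2 NM_{f_2}^N(t)-NM_{\alpha_1 f_1+\alpha_2 f_2}^N(t)\big)_{t\in[0,T]}\ \Longrightarrow\ \big(\alpha_1\widetilde G_t(f_1)+\alpha_2\widetilde G_t(f_2)-\widetilde G_t(\alpha_1 f_1+\alpha_2 f_2)\big)_{t\in[0,T]}.
\]

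The decisive point is that $f\mapsto M_f^N(t)$ is linear, which is immediate from the displayed formula for $M_f^N$ since its integrand is linear in $f'$. Hence $\alpha_1 NM_{f_1}^N(t)+\alpha_2 NM_{f_2}^N(t)-NM_{\alpha_1 f_1+\alpha_2 f_2}^N(t)$ vanishes identically for every $N$ and every $t\in[0,T]$, so its distributional limit is the zero process; therefore $\alpha_1\widetilde G_t(f_1)+\alpha_2\widetilde G_t(f_2)-\widetilde G_t(\alpha_1 f_1+\alpha_2 f_2)=0$ for all $t\in[0,T]$ almost surely, which is the asserted linearity. I do not expect a genuine obstacle; the only place requiring a sentence of care is confirming that the reduction $\widetilde Q_t^N(f)-NM_f^N(t)\to 0$ inside the proof of Theorem \ref{Thm-particle} genuinely isolates the linear martingale part exactly as in Theorem \ref{Thm-Wishart}, so that the limiting Gaussian field inherits the linearity of $M_f^N$ in $f$.
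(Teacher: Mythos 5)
Your proof is correct and follows precisely the same route the paper takes for Proposition \ref{Prop on linearity of the Gaussian process} (which the paper explicitly says carries over to the particle-system case): identify the martingale part $M_f^N$ from It\^o's formula, note that it is exactly linear in $f$, pass to the joint distributional limit of the triple, and conclude that the limiting linear combination must be the zero process. No gaps.
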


The proofs of Theorem \ref{Thm-particle}, Corollary \ref{Thm for bounded particle system}, Corollary \ref{Coro-Lp-particle} and Proposition \ref{Prop on linearity of the Gaussian process-particle system} are similar to those of Theorem \ref{Thm-Wishart}, Corollary \ref{Thm for bounded general Wishart process}, Corollary \ref{Coro-Lp} and Proposition \ref{Prop on linearity of the Gaussian process}, respectively, and thus omitted.

\section{Applications}\label{section-application}

In this section, we apply our main results obtained in Section \ref{section-CLT} to the eigenvalues of Wishart process (Section \ref{sec-wishart}), the Dyson's Brownian motion (Section \ref{sec-dyson}) and  the eigenvalues of symmetric Ornstein-Uhlenbeck matrix process (Section \ref{sec-ou}). In particular, for these three cases, we will show the boundedness of the moments of the empirical measures assuming proper initial conditions. This enables us to apply  Corollaries \ref{Thm for bounded general Wishart process}, \ref{Coro-Lp}, \ref{Thm for bounded particle system} and \ref{Coro-Lp-particle} to study the flunctuations $\L_t(f)$ for   polynomial functions $f\in \R[x]$, and recursive formulas are obtained for the basis $ \{\L_t(x^n), t\in[0,T]\}_{ n\in\mathbb N}$ of $\{\L_t(f), t\in[0,T]\}_{f\in \R[x]}$.  Note that these results are more precise than the general results in Section \ref{section-CLT}, where we study the centered process $\{Q_t^N(f)\}$ for more restricted test function $f$.

\subsection{Comparison principle} \label{section-comparison}

In this subsection, we provide a comparison principle  for SDE \eqref{eigenvalue SDE} and particle system \eqref{SDE-particle}, which allows us to obtain the boundedness of the eignenvalues/particles under more general initial conditions in Sections \ref{sec-wishart}, \ref{sec-dyson} and \ref{sec-ou}. 

 Throughout this subsection, the dimension $N$ is fixed and thus subscripts/superscripts are removed. Precisely, consider the following two particle systems: for $1 \le i \le N, \ t \ge 0$,
\begin{equation} \label{Compare-particle sde 1}
\begin{cases}
	dx_i(t) = \sigma_i(x_i(t)) dW_i(t)+ \left( b_i(x_i(t)) + \sum_{j:j\neq i} \dfrac{H_{ij}(x_i(t), x_j(t))}{x_i(t) - x_j(t)} \right) dt, \\
	x_1(t) \le \ldots \le x_N(t),
\end{cases}
\end{equation}
and
\begin{equation} \label{Compare-particle sde 2}
\begin{cases}
	dy_i(t) = \sigma_i(y_i(t)) dW_i(t)+ \left( \tilde{b}_i(y_i(t)) + \sum_{j:j\neq i} \dfrac{H_{ij}(y_i(t), y_j(t))}{y_i(t) - y_j(t)} \right) dt, \\
	y_1(t) \le \ldots \le y_N(t),
\end{cases}
\end{equation}
with non-colliding initial values $x(0) = (x_1(0), \ldots, x_N(0))$ and $y(0) = (y_1(0), \ldots, y_N(0))$, respectively. Here, the functions $\sigma_i(x)$, $b_i(x)$ and $\tilde{b}_i(x)$ for $1\le i \le N$ are continuous, and  $H_{ij}(x,y)$ with $i\neq j$ is a continuous,  non-negative and symmetric function satisfying the condition \cite[(A1)]{Graczyk2014}:
\begin{align} \label{compare-particle force condition}
	\dfrac{H_{ij}(w,z)}{z-w} \le \dfrac{H_{ij}(x,y)}{y-x}, \quad \forall w < x < y < z, ~~ 1 \le i \neq j \le N.
\end{align}

Note that conditions for the existence and uniqueness of a non-colliding and non-exploding strong solution to \eqref{Compare-particle sde 1} (or \eqref{Compare-particle sde 2}) were obtained in \cite{Graczyk2014}.  In particular, under conditions (A2) - (A5) therein, the particles will separate from each other immediately after starting from a colliding initial state, and will not collide forever. 

\begin{theorem} \label{Thm-compare of particle}
Suppose $x(t) = (x_1(t), \ldots, x_N(t))$ and $y(t) = (y_1(t), \ldots, y_N(t))$ are the non-exploding and non-colliding unique strong solutions to \eqref{Compare-particle sde 1} and \eqref{Compare-particle sde 2}, respectively. Assume that there exists a strictly increasing function $\rho: [0,\infty) \rightarrow [0,\infty)$ with $\rho(0) = 0$ and
\begin{align*}
	\int_{0+} \rho^{-2}(u) du = \infty,
\end{align*}
such that
\begin{align*}
	|\sigma_i(u) - \sigma_i(v)| \le \rho(|u - v|),   ~~ \forall u,v \in \mathbb{R}, ~1 \le i \le N.
\end{align*}
If  we further assume that $b_i(u) \le \tilde{b}_i(u)$  for all $u \in \mathbb{R}$,  and $x_i(0) \le y_i(0)$ a.s., $1 \le i \le N$, then
\begin{align*}
	\mathbb{P} \left( x_i(t) \le y_i(t), \forall t \ge 0, 1 \le i \le N \right) = 1.
\end{align*}
\end{theorem}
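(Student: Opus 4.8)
The plan is to prove the comparison result by a coupling argument on the ordered particle systems, driven by the same Brownian motions $\{W_i\}$, and to control the difference $x_i(t)-y_i(t)$ by an argument in the spirit of the Yamada--Watanabe uniqueness proof, using the function $\rho$ and the condition $\int_{0+}\rho^{-2}(u)\,du=\infty$. The key observation that makes the comparison work is that, for the ordered systems, the interaction term $\sum_{j\neq i} H_{ij}(x_i,x_j)/(x_i-x_j)$ is, in an appropriate sense, monotone along the order: this is precisely what condition \eqref{compare-particle force condition} encodes.

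First I would set $\Delta_i(t) = x_i(t) - y_i(t)$ and aim to show $\mathbb{P}(\Delta_i(t)\le 0 \text{ for all } t\ge 0,\ 1\le i\le N)=1$. Following the standard Yamada--Watanabe device, I would take a sequence of smooth functions $\phi_m$ approximating $u\mapsto (u^+)^2$ (or $u^+$) built from the Yamada--Watanabe kernels associated to $\rho$, so that $\phi_m'' (u)\,\rho^2(|u|)\le 2/m$ on the relevant range and $\phi_m(u)\uparrow u^+$. Applying It\^o's formula to $\sum_i \phi_m(\Delta_i(t))$, the martingale part involves $(\sigma_i(x_i)-\sigma_i(y_i))^2 \le \rho^2(|\Delta_i|)$, and the second-order term is killed in the limit $m\to\infty$ by the choice of $\phi_m$. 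The first-order (drift) term splits into three pieces: the difference $b_i(x_i) - \tilde b_i(y_i)$, which I would bound using $b_i\le \tilde b_i$ together with (local) Lipschitz-type control or just continuity plus the a priori non-explosion bounds, writing $b_i(x_i)-\tilde b_i(y_i) = (b_i(x_i)-b_i(y_i)) + (b_i(y_i)-\tilde b_i(y_i))$ with the second summand $\le 0$; and the interaction difference $\sum_{j\neq i}\big[ H_{ij}(x_i,x_j)/(x_i-x_j) - H_{ij}(y_i,y_j)/(y_i-y_j)\big]$.

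The main obstacle, and the heart of the proof, is handling the interaction term. Here I would argue that, when summed against $\phi_m'(\Delta_i)\ge 0$ and symmetrized over pairs $i\neq j$, the total interaction contribution is $\le 0$ on the event where the ordering $x_1\le\cdots\le x_N$, $y_1\le\cdots\le y_N$ holds. The mechanism: on the set where $\Delta_i>0\ge\Delta_j$ (i.e.\ $x_i > y_i$ but $x_j\le y_j$) one compares the four points in the correct order and invokes \eqref{compare-particle force condition}; the pair-symmetrization $\tfrac12\sum_{i\neq j}(\phi_m'(\Delta_i)-\phi_m'(\Delta_j))\big[\cdots\big]$ then has a fixed sign. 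This is the step requiring the most care, because one must check all the orderings of $\{x_i,x_j,y_i,y_j\}$ consistent with $x_i\le x_j$, $y_i\le y_j$ and see that \eqref{compare-particle force condition} (nonnegativity and the monotonicity inequality, applied perhaps twice) always yields the right sign against $\phi_m'(\Delta_i)-\phi_m'(\Delta_j)$; the convention at collisions is irrelevant here since the solutions are non-colliding.

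Finally, combining these estimates gives $\mathbb{E}\big[\sum_i \phi_m(\Delta_i(t\wedge\tau_R))\big] \le \tfrac{NT}{m} + C\int_0^t \mathbb{E}\big[\sum_i \phi_m(\Delta_i(s\wedge\tau_R))\big]\,ds$ for a localizing sequence $\tau_R$ (using non-explosion to remove it), so Gr\"onwall's inequality and $m\to\infty$ force $\mathbb{E}[\sum_i \Delta_i(t)^+]=0$, i.e.\ $x_i(t)\le y_i(t)$ a.s.\ for each fixed $t$; path-continuity upgrades this to the simultaneous-in-$t$ statement. I would remark that a mild technical point is ensuring the drift estimate on $b_i(x_i)-b_i(y_i)$ is integrable; since $b_i$ is only assumed continuous one localizes via $\tau_R = \inf\{t: \max_i(|x_i(t)|\vee|y_i(t)|)\ge R\}$, on which $b_i$ is uniformly continuous, and uses that the non-colliding non-exploding solutions satisfy $\tau_R\uparrow\infty$ a.s.
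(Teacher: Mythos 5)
Your approach is genuinely different from the paper's. The paper does not run a Yamada--Watanabe argument at all: it observes that the one-sided consequences of \eqref{compare-particle force condition} (namely that $z\mapsto H_{ij}(x,z)/(x-z)$ is nondecreasing for $z>x$, and $w\mapsto H_{ij}(w,y)/(y-w)$ is nondecreasing for $w<y$) make the drift vectors $F$ and $\widetilde F$ \emph{quasi-monotonously increasing}, so that the multi-dimensional comparison theorem of Geiss--Manthey (the paper's Lemma~4.2) applies directly once the drift singularity is removed. The singularity is handled by a localization $\tau_\epsilon$ at the first time some consecutive gap in either system shrinks to $\epsilon$, replacing $F,\widetilde F$ by bounded continuous quasi-monotone extensions on $\Delta_\epsilon$, and letting $\epsilon\to 0$ using the non-colliding hypothesis.

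The key step of your proof, however, has a gap. You claim that after pair-symmetrization,
\[
[\phi_m'(\Delta_i)-\phi_m'(\Delta_j)]\Bigl[\tfrac{H_{ij}(x_i,x_j)}{x_i-x_j}-\tfrac{H_{ij}(y_i,y_j)}{y_i-y_j}\Bigr]
\]
is nonpositive for each pair $i<j$. This is correct on the event $\Delta_i>0\ge\Delta_j$ (there $y_i<x_i<x_j\le y_j$, so $[x_i,x_j]\subset[y_i,y_j]$ and \eqref{compare-particle force condition} applies, and symmetrically for $\Delta_i\le 0<\Delta_j$). But it can \emph{fail} when $\Delta_i$ and $\Delta_j$ have the same sign, because then the intervals $[x_i,x_j]$ and $[y_i,y_j]$ need not be nested and \eqref{compare-particle force condition} gives no relation between them. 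For a concrete illustration, take $H$ with $H(a,b)/(b-a)=\phi\bigl(\tfrac{a+b}{2},\tfrac{b-a}{2}\bigr)$, $\phi(c,r)=\tfrac{1}{r}+\psi(r+|c|)$ for $\psi$ decreasing: one checks that $\phi(c',r')\le\phi(c,r)$ whenever $r'\ge r+|c'-c|$, i.e.\ \eqref{compare-particle force condition} holds, yet with $(x_i,x_j)=(3,5)$, $(y_i,y_j)=(0,4)$ (so $\Delta_i=3>\Delta_j=1>0$ and $\phi_m'(\Delta_i)\ge\phi_m'(\Delta_j)$) one has $H(3,5)/(3-5) - H(0,4)/(0-4) = \phi(2,2)-\phi(4,1) = \tfrac12-1+\psi(4)-\psi(5)>0$ for $\psi$ decreasing fast enough. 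So the product is $\ge 0$, not $\le 0$, and your Gr\"onwall inequality does not close. (One might hope the bad terms vanish as $m\to\infty$, since $\phi_m'\to\mathbf 1_{u>0}$ and the bad set is where both $\Delta$'s are small and positive, but that requires a genuinely new estimate rather than sign-definiteness, and you would also still lack the Lipschitz control on $b_i(x_i)-b_i(y_i)$ needed for Gr\"onwall, since $b_i$ is only continuous.) Finally, note that your localization $\tau_R$ controls only $|x_i|,|y_i|$ and not the inter-particle gaps, so the drift singularity $1/(x_i-x_j)$ is not removed; the paper localizes instead by the first time a gap hits $\epsilon$ in either system, which is what actually makes the smoothed drifts bounded.
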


\begin{proof}
The continuity of the functions $H_{ij}$ and the condition \eqref{compare-particle force condition} implies that for all $1 \le i \neq j \le N$,
\begin{align*}
	\dfrac{H_{ij}(x,z)}{x-z} \ge \dfrac{H_{ij}(x,y)}{x-y}, \quad \forall x < y \le z,
\end{align*}
and
\begin{align*}
	\dfrac{H_{ij}(w,y)}{y-w} \le \dfrac{H_{ij}(x,y)}{y-x}, \quad \forall w \le x < y.
\end{align*}
Hence, the drift functions
\begin{align*}
	F(u) = \left( b_i(u_i) + \sum_{j:j\neq i} \dfrac{H_{ij}(u_i, u_j)}{u_i - u_j} \right)_{1 \le i \le N}, \quad
	\widetilde{F}(u) = \left( \tilde{b}_i(u_i) + \sum_{j:j\neq i} \dfrac{H_{ij}(u_i, u_j)}{u_i - u_j} \right)_{1 \le i \le N},
\end{align*}
satisfy the quasi-monotonously increasing condition in Lemma \ref{comparison}.

   In order to apply Lemma \ref{comparison} to get the desired result, we use an approximation argument to remove the singularities of the drift functions $F$ and $\widetilde{F}$.  For $\epsilon > 0$, let 
\begin{align*}
	\Delta_{\epsilon} = \left\{ u = (u_1, \ldots, u_N) \in \mathbb{R}^N : \ \forall 1 \le i \le N-1, u_{i+1} - u_{i} > \epsilon \right\}
\end{align*}
and define the stopping time
\begin{align*}
	\tau_{\epsilon} = \inf_{t > 0} \left\{ \min_{1 \le i \le N-1}  (x_{i+1}(t) - x_i(t)) \wedge (y_{i+1}(t) - y_i(t)) \le \epsilon \right\}.
\end{align*}
One can find continuous quasi-monotonously increasing functions $F_{\epsilon}$ and $\widetilde F_\epsilon$, such that they coincide with $F$ and $\widetilde F$ in $\Delta_{\epsilon}$, repspectively. Before time $\tau_{\epsilon}$, both $x$-particles and $y$-particles stay in $\Delta_{\epsilon}$ and thus satisfy   \eqref{Compare-particle sde 1} and \eqref{Compare-particle sde 2} with  drift functions $\widetilde F_\epsilon$ and  $\widetilde F_\epsilon$, respectively.

Applying Lemma \ref{comparison} to the processes $x^{\epsilon}$ and $y^{\epsilon}$, we have
\begin{align*}
	\mathbb{P} \left( x^{\epsilon}_i(t) \le y^{\epsilon}_i(t), \ \forall \ t \ge 0, 1 \le i \le N \right) = 1,
\end{align*}
which implies
\begin{align*}
	\mathbb{P} \left( x_i(t) \le y_i(t), \ \forall  t \in[0, \tau_{\epsilon}], 1 \le i \le N \right) = 1.
\end{align*}
The desired result now follows from the non-colliding property $\lim_{\epsilon\to 0^+}\tau_{\epsilon} = \infty$.

\end{proof}

As a corollary of Theorem \ref{Thm-compare of particle}, we have the following comparison principle for SDE \eqref{eigenvalue SDE} of eigenvalue processes. Note that  the existence and uniqueness of the non-colliding and non-exploding strong solution was obtained under proper conditions  in \cite{Graczyk2013}.
\begin{corollary} \label{Thm-compare of eigenvalue}
Suppose that the following systems of eigenvalue SDEs
\begin{align*}
	&d \lambda_i(t)
	= 2 g_N(\lambda_i(t)) h_N(\lambda_i^N(t)) dW_i(t) + \left( b_N(\lambda_i^N(t)) + \sum_{j:j \neq i} \dfrac{G_N(\lambda_i(t),\lambda_j(t))}{\lambda_i(t) - \lambda_j(t)} \right) dt, \ 1 \le i \le N, \\
	&\lambda_1(t) \le \ldots \le \lambda_N(t), \ t \ge 0,
\end{align*}
and
\begin{align*}
	&d \theta_i(t)
	= 2 g_N(\theta_i(t)) h_N(\theta_i^N(t)) dW_i(t) + \left( \tilde{b}_N(\theta_i^N(t)) + \sum_{j:j \neq i} \dfrac{G_N(\theta_i(t),\theta_j(t))}{\theta_i(t) - \theta_j(t)} \right) dt, \ 1 \le i \le N, \\
	&\theta_1(t) \le \ldots \le \theta_N(t), \ t \ge 0,
\end{align*}
with non-colliding initial values $\lambda(0) = (\lambda_1(0), \ldots, \lambda_N(0))$ and $\theta(0) = (\theta_1(0), \ldots, \theta_N(0))$, respectively, have  non-exploding and non-colliding unique strong solutions $\lambda(t) = (\lambda_1(t), \ldots, \lambda_N(t))$ and $\theta(t) = (\theta_1(t), \ldots, \theta_N(t))$, respectively. Here, $g_N(x)$, $h_N(x)$, $b_N(x)$ and $\tilde{b}_N(x)$ are continuous functions, and $G_N(x,y) = g_N^2(x) h_N^2(y) + g_N^2(y) h_N^2(x)$ satisfies
\begin{align} \label{compare-eigenvalue force condition}
	\dfrac{G_N(w,z)}{z-w} \le \dfrac{G_N(x,y)}{y-x}, \quad \forall w < x < y < z.
\end{align}
Assume that there exists a strictly increasing function $\rho: [0,\infty) \rightarrow [0,\infty)$ with $\rho(0) = 0$ and
\begin{align*}
	\int_{0^+} \rho^{-2}(u) du = \infty,
\end{align*}
such that
\begin{align*}
	|g_N(u)h_N(u) - g_N(v)h_N(v)| \le \rho(|u - v|), \quad \forall u,v \in \mathbb{R}.
\end{align*}
Furthermore, we assume that $b_N(u) \le \tilde{b}_N(u)$ for all $u \in \mathbb{R}$. If $\lambda_i(0) \le \theta_i(0)$ for all $1 \le i \le N$ almost surely, then
\begin{align*}
\mathbb{P} \left( \lambda_i(t) \le \theta_i(t), \forall t \ge 0, 1 \le i \le N \right) = 1.
\end{align*}
\end{corollary}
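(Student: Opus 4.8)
The plan is to derive Corollary \ref{Thm-compare of eigenvalue} directly from Theorem \ref{Thm-compare of particle} by recognizing that the eigenvalue SDEs \eqref{eigenvalue SDE} are a special instance of the particle system \eqref{Compare-particle sde 1}--\eqref{Compare-particle sde 2}. First I would set, for each $1\le i\le N$, the diffusion coefficient $\sigma_i(x) = 2g_N(x)h_N(x)$, the drifts $b_i(x) = b_N(x)$ and $\tilde b_i(x) = \tilde b_N(x)$, and the interaction kernel $H_{ij}(x,y) = G_N(x,y) = g_N^2(x)h_N^2(y) + g_N^2(y)h_N^2(x)$ for all $i\neq j$. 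With these identifications, the two systems in the statement become precisely \eqref{Compare-particle sde 1} and \eqref{Compare-particle sde 2}, and they are driven by the same Brownian motions $\{W_i\}$.

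Next I would verify the hypotheses of Theorem \ref{Thm-compare of particle}. The function $H_{ij} = G_N$ is continuous (as $g_N, h_N$ are continuous), symmetric, and non-negative (being a sum of squares), and the monotonicity condition \eqref{compare-particle force condition} is exactly the assumed condition \eqref{compare-eigenvalue force condition} since $H_{ij}$ does not depend on $(i,j)$. The Hölder-type modulus-of-continuity assumption on $\sigma_i$ translates into the assumed bound $|g_N(u)h_N(u) - g_N(v)h_N(v)| \le \rho(|u-v|)$: indeed $\sigma_i(u) - \sigma_i(v) = 2(g_N(u)h_N(u) - g_N(v)h_N(v))$, so one can use $2\rho$ in place of $\rho$ (or absorb the factor $2$ into $\rho$, noting $\int_{0+}(2\rho)^{-2} = \infty$ still holds), hence the Yamada-Watanabe-type condition $\int_{0+}\rho^{-2}(u)\,du = \infty$ is preserved. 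The drift comparison $b_i(u) = b_N(u) \le \tilde b_N(u) = \tilde b_i(u)$ and the ordered initial data $\lambda_i(0) \le \theta_i(0)$ a.s. are the remaining hypotheses, both assumed directly. Finally, both systems are assumed to possess non-exploding, non-colliding unique strong solutions, which is the standing assumption of Theorem \ref{Thm-compare of particle}.

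Applying Theorem \ref{Thm-compare of particle} with $x_i(t) = \lambda_i(t)$ and $y_i(t) = \theta_i(t)$ then yields
\begin{align*}
\mathbb{P}\left(\lambda_i(t) \le \theta_i(t), \ \forall t \ge 0, \ 1 \le i \le N\right) = 1,
\end{align*}
which is the assertion. Since Corollary \ref{Thm-compare of eigenvalue} is a verbatim specialization, there is no real obstacle beyond carefully checking the translation dictionary; the only point requiring a moment's attention is the factor of $2$ in the diffusion coefficient, which is harmless for the Osgood/Yamada-Watanabe condition on $\rho$.
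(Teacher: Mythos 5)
Your proposal is correct and follows precisely the route the paper intends: the corollary is a direct specialization of Theorem \ref{Thm-compare of particle} under the identifications $\sigma_i = 2g_Nh_N$, $b_i = b_N$, $\tilde b_i = \tilde b_N$, $H_{ij} = G_N$, and you rightly note that the factor $2$ in the diffusion coefficient is absorbed by replacing $\rho$ with $2\rho$ without affecting the Osgood condition $\int_{0+}(2\rho)^{-2}\,du = \infty$.
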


\subsection{Application to eigenvalues of Wishart process}\label{sec-wishart}

In this subsection, we discuss the limit theorem for the Wishart process.  As illustrated in \cite{Graczyk2013} and \cite{Song2019}, the scaled Wishart process $X^N_t = \tilde{B}^\intercal(t) \tilde{B}(t) / N$, where $\tilde{B}(t)$ is a $P \times N$ Brownian matrix with $P > N-1$, is the solution to \eqref{matrix SDE} with the coefficient functions
\begin{align*}
	g_N(x) h_N(y) = \dfrac{\sqrt{x}}{\sqrt{N}}, \quad
	b_N(x) = \dfrac{P}{N}.
\end{align*}
The eigenvalue processes  now satisfy
\begin{align} \label{Wishart eigenvalue SDE}
	d \lambda_i^N(t)
	= 2 \dfrac{\sqrt{\lambda_i^N(t)}}{\sqrt{N}} dW_i(t) + \left( \dfrac{P}{N} + \dfrac{1}{N} \sum_{j:j \neq i} \dfrac{\lambda_i^N(t) + \lambda_j^N(t)}{\lambda_i^N(t) - \lambda_j^N(t)} \right) dt, ~ 1\le i \le N, ~ t\ge 0.
\end{align}
In this case, we have
\begin{align}\label{eq-3.6}
	NG_N(x,y) = G(x,y) = x+y\ \ \text{ and } \ \
	b(x) = \lim_{N \rightarrow \infty} \dfrac{P}{N} = c \ge 1.
\end{align}

By \cite[Theorem 3]{Graczyk2019}, all the components of the solution to \eqref{Wishart eigenvalue SDE} are non-negative if all the components of the initial value are non-negative.  Let $\mathbb{P}^N$ be the distribution on $\Delta_N = \{x=(x_1, x_2, \dots, x_N) \in \mathbb{R}^N: 0 < x_1 < \ldots < x_N \}$ with density
\begin{align} \label{density of Wishart eigenvalue}
	p(x) = C_{N,p} \exp \left( -\dfrac{N}{2} \sum_{i=1}^N x_i \right) \prod_{i=1}^N x_i^{(P-N-1)/2} \prod_{1\le j<i\le N} (x_i - x_j),
\end{align}
where $C_{N,p} > 0$ is a normalization constant. Then we have the following estimation on the eigenvalues.

\begin{lemma} \label{largest eigenvalue bound}
Let  $\xi^N=(\xi^N_1, \dots, 
\xi^N_N)$ be a random vector that is independent of $(W_1, \dots, W_N)$ and has  \eqref{density of Wishart eigenvalue} as its joint probability density function. Assume that $(\lambda_1^N(0), \dots, \lambda_N^N(0))$ is independent of $(W_1, \dots, W_N)$ and that  there exists a constant $a > 0$, such that $\lambda_i^N(0) \le a \xi_i^N$ for $1 \le i \le N$ almost surely. Then there exists a stationary stochastic process $u^N(t)$ with initial value $u^N(0) = \xi^N$ satisfying, for $1 \le i \le N$ and $t\ge 0$,
\begin{align*}
	\lambda_i^N(t) \le v_i^N(t)
	= (t+a) u_i^N(t).
\end{align*}
\end{lemma}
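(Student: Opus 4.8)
The plan is to construct the dominating process $v^N(t)=(t+a)u^N(t)$ where $u^N(t)$ is the stationary solution of the Wishart eigenvalue SDE \eqref{Wishart eigenvalue SDE} started from its invariant measure $\mathbb{P}^N$ (with density \eqref{density of Wishart eigenvalue}), and then verify that $v^N(t)$ itself satisfies a system of the form \eqref{Compare-particle sde 2} whose drift dominates that of $\lambda^N(t)$, so that Corollary \ref{Thm-compare of eigenvalue} applies. First I would recall that the measure $\mathbb{P}^N$ is the stationary distribution for \eqref{Wishart eigenvalue SDE}: this is the classical fact that the eigenvalue process of a Wishart matrix process has the Wishart/Laguerre ensemble as its equilibrium, and it can be checked directly by verifying that $p(x)$ solves the stationary Fokker--Planck equation associated with \eqref{Wishart eigenvalue SDE}. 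Let $u^N(t)$ be the solution of \eqref{Wishart eigenvalue SDE} driven by the \emph{same} Brownian motions $(W_1,\dots,W_N)$, with $u^N(0)=\xi^N$; since $\xi^N$ is distributed according to the stationary law, $u^N(t)$ is a stationary process and, by \cite[Theorem 3]{Graczyk2019} together with the non-colliding property, stays in $\Delta_N$ with positive coordinates for all $t\ge 0$.

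Next I would compute the SDE satisfied by $v_i^N(t)=(t+a)u_i^N(t)$ via It\^o's formula (really just the product rule, since $t+a$ has finite variation):
\begin{align*}
  dv_i^N(t) = u_i^N(t)\,dt + (t+a)\,du_i^N(t)
  = \frac{v_i^N(t)}{t+a}\,dt + 2\sqrt{t+a}\,\sqrt{u_i^N(t)}\,dW_i(t) + (t+a)\Bigl(\frac{P}{N} + \frac1N\sum_{j\neq i}\frac{u_i^N+u_j^N}{u_i^N-u_j^N}\Bigr)dt.
\end{align*}
Rewriting in terms of $v$: the diffusion coefficient is $2\sqrt{v_i^N(t)/N}$ exactly as in \eqref{Wishart eigenvalue SDE} (since $\sqrt{t+a}\sqrt{u_i^N}=\sqrt{(t+a)^2 u_i^N/(t+a)}=\sqrt{v_i^N}\cdot\sqrt{t+a}/\sqrt{t+a}$... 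I should be careful: $2\sqrt{t+a}\sqrt{u_i^N}=2\sqrt{(t+a)u_i^N}\cdot\sqrt{(t+a)}/\sqrt{t+a}$, hmm — actually $2\sqrt{t+a}\sqrt{u_i^N} = 2\sqrt{(t+a)u_i^N} = 2\sqrt{v_i^N}$ is wrong by a factor; the correct identity is $\sqrt{t+a}\sqrt{u_i^N}=\sqrt{v_i^N}$, so the diffusion term is $2\sqrt{v_i^N(t)}\,dW_i(t)$, not divided by $\sqrt N$). To match the normalization of \eqref{Wishart eigenvalue SDE} one instead keeps track of the $1/\sqrt N$ from the start; in any case the key point is that the interaction term transforms as $(t+a)\cdot\frac1N\frac{u_i^N+u_j^N}{u_i^N-u_j^N}=\frac1N\frac{v_i^N+v_j^N}{v_i^N-v_j^N}$, so $v^N$ solves a system with the \emph{same} interaction kernel $G_N(x,y)=x+y$ (up to the normalization in the definition of $g_N,h_N$) and the \emph{same} diffusion coefficient as $\lambda^N$, while its drift is $b_N^{v}(t,v_i)=\frac{v_i}{t+a}+(t+a)\frac{P}{N}$, which is larger than $b_N(x)=\frac PN$ because $v_i^N\ge 0$ and $t+a\ge a>0$ — wait, $(t+a)\frac PN\ge \frac PN$ requires $t+a\ge 1$, which need not hold for small $t$ if $a<1$; I should either assume $a\ge 1$ without loss of generality (enlarging $a$ only weakens the hypothesis $\lambda_i^N(0)\le a\xi_i^N$) or absorb the constant differently. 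Taking $a\ge 1$, we get $b_N^v\ge b_N$ pointwise.

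Then the comparison itself: apply Corollary \ref{Thm-compare of eigenvalue} with the two systems being $\lambda^N(t)$ (drift $b_N$) and $v^N(t)$ (drift $b_N^v$). The hypotheses to check are: (i) the interaction kernel $G_N(x,y)=x+y$ satisfies the monotonicity condition \eqref{compare-eigenvalue force condition} — for the Wishart kernel $\frac{G_N(w,z)}{z-w}=\frac{w+z}{z-w}$, and one verifies $\frac{w+z}{z-w}\le\frac{x+y}{y-x}$ for $w<x<y<z$ directly since $\frac{w+z}{z-w}=1+\frac{2w}{z-w}$ is increasing in $w$ and decreasing in $z-w$; (ii) the Yamada--Watanabe-type modulus condition on $g_Nh_N(x)=\sqrt x/\sqrt N$, which holds with $\rho(u)=\sqrt{u}/\sqrt N$ since $|\sqrt u-\sqrt v|\le\sqrt{|u-v|}$ and $\int_{0^+}u^{-1}\,du=\infty$; (iii) the drift domination $b_N(u)\le b_N^v(t,u)$, shown above; and (iv) the initial ordering $\lambda_i^N(0)\le a\xi_i^N=v_i^N(0)$, which is exactly the hypothesis. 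One subtlety is that Corollary \ref{Thm-compare of eigenvalue} as stated has time-homogeneous drifts $b_N,\tilde b_N$, whereas $b_N^v$ depends on $t$; this is handled by noting that the proof of Theorem \ref{Thm-compare of particle} (via Lemma \ref{comparison} and the localization $\tau_\epsilon$) goes through verbatim for drifts with continuous time dependence, as long as the quasi-monotone structure in the spatial variables is preserved (which it is, since the $t$-dependent part $\frac{v_i}{t+a}+(t+a)\frac PN$ is separable and increasing in $v_i$), or alternatively by re-running the comparison argument directly. I expect this last point — making the time-inhomogeneous comparison rigorous, i.e.\ checking that the localized approximation argument of Theorem \ref{Thm-compare of particle} tolerates a continuous $t$-dependent drift — to be the main (though minor) obstacle; everything else is a routine verification of the structural hypotheses.
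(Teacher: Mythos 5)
Your plan hits a genuine obstruction at the very first step: the measure $\mathbb{P}^N$ is \emph{not} a stationary distribution for the Wishart eigenvalue SDE \eqref{Wishart eigenvalue SDE}. The Wishart process $X^N_t = \tilde B^\intercal(t)\tilde B(t)/N$ grows in time; indeed, summing \eqref{Wishart eigenvalue SDE} over $i$, the interaction terms cancel by antisymmetry and one gets $d\bigl(\sum_i\lambda_i^N\bigr) = 2\sum_i\sqrt{\lambda_i^N/N}\,dW_i + P\,dt$, so $\mathbb{E}\bigl[\sum_i\lambda_i^N(t)\bigr]$ increases linearly. No invariant measure exists. The density \eqref{density of Wishart eigenvalue} is the law of the eigenvalues at a fixed time, not an equilibrium of the flow. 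Consequently, defining $u^N(t)$ as the solution of \eqref{Wishart eigenvalue SDE} started from $\xi^N$ does not produce a stationary process, which breaks the statement you are trying to prove.

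The second problem is independent of the first: even taking $u^N$ to solve \eqref{Wishart eigenvalue SDE}, the transformed process $v^N = (t+a)u^N$ does not have the right coefficients to feed into Corollary \ref{Thm-compare of eigenvalue}. With $u_i^N = v_i^N/(t+a)$ one finds that the diffusion part becomes $(t+a)\cdot\tfrac{2}{\sqrt N}\sqrt{u_i^N}\,dW_i = \tfrac{2\sqrt{t+a}}{\sqrt N}\sqrt{v_i^N}\,dW_i$ (your algebra here dropped both the $\sqrt{N}$ and the surviving $\sqrt{t+a}$), and the interaction drift becomes $(t+a)\cdot\tfrac1N\sum_{j\neq i}\tfrac{v_i^N+v_j^N}{v_i^N-v_j^N}$, since the ratio $\tfrac{u_i+u_j}{u_i-u_j}$ is scale-invariant but the outer factor $(t+a)$ remains. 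So both the diffusion coefficient and the interaction kernel acquire an extra time-dependent factor relative to \eqref{Wishart eigenvalue SDE}. The comparison principle (Theorem \ref{Thm-compare of particle} / Corollary \ref{Thm-compare of eigenvalue}) requires the \emph{same} diffusion coefficients and the \emph{same} interaction kernel on both sides, with only the drift $b$ allowed to differ, so it simply cannot be invoked for the pair $(\lambda^N, v^N)$ you constructed.

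What the paper actually does is define $u^N$ through the auxiliary \emph{time-inhomogeneous} SDE \eqref{self-similarity sde}, which differs from \eqref{Wishart eigenvalue SDE} in two essential ways: the diffusion carries an extra $1/\sqrt{t+a}$, and the drift contains an extra mean-reverting term $-u_i^N/(t+a)$. The restoring term is precisely what makes $\mathbb{P}^N$ a stationary law for \eqref{self-similarity sde} (verified by the generator computation in Lemma \ref{Lemma-stationary distribution}), and the $1/\sqrt{t+a}$ in the diffusion together with the $1/(t+a)$ time scaling are rigged so that, after the change of variable $v^N=(t+a)u^N$, the $u_i^N\,dt$ from the product rule exactly cancels the mean-reverting drift and all time-dependent factors drop out: $v^N$ solves \eqref{Wishart eigenvalue SDE} on the nose, with initial data $v^N(0)=a\xi^N$. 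The comparison then becomes a trivial application of Corollary \ref{Thm-compare of eigenvalue} with \emph{identical} drifts $b_N=\tilde b_N=P/N$ and identical diffusions, needing only the initial ordering $\lambda_i^N(0)\le a\xi_i^N=v_i^N(0)$. Your worries about $a\ge 1$ and about time-inhomogeneous drifts in the comparison lemma evaporate once the right auxiliary SDE is used.
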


\begin{proof}
Consider the following system of SDEs, for $1\le i\le N$,
\begin{align} \label{self-similarity sde}
	d u_i^N(t)
	= 2 \dfrac{\sqrt{u_i^N(t)}}{\sqrt{N(t+a)}} dW_i(t) + \dfrac{1}{t+a} \left( \dfrac{P}{N} - u_i^N(t) + \dfrac{1}{N} \sum_{j:j \neq i} \dfrac{u_i^N(t) + u_j^N(t)}{u_i^N(t) - u_j^N(t)} \right) dt, \quad t \ge0,
\end{align}
with initial value $u_i^N(0)=\xi_i^N(0)$ distributed according to $\mathbb{P}^N$ and $u_1^N(t) \le \ldots \le u_N^N(t)$. 

 Note that the pathwise uniqueness proved in \cite[Theorem 2]{Graczyk2013} is still valid if the coefficient functions depend on the time $t$ and  the corresponding conditions therein hold uniformly in $t$. Furthermore, the boundedness estimation and the McKean's argument in \cite[Theorem 5]{Graczyk2013} is also valid when $t \ge 0$. Therefore, the system of SDEs \eqref{self-similarity sde} has a  unique non-colliding strong solution. 

If at any time $t$, $u^N(t)$ has the distribution $\mathbb{P}^N$, then Lemma \ref{Lemma-stationary distribution} yields that $\frac{d}{dt} \mathbb{E} [f(u^N(t))]$ vanishes for $f \in C_b^2(\mathbb{R})$. Since $u^N(0)$ is distributed according to $\mathbb{P}^N$, we can conclude that $(u^N(t))_{t \ge 0}$ is a stationary process with marginal distribution $\mathbb{P}^N$.

Now let $v_i^N(t) = (t+a) u_i^N(t)$ for $1 \le i \le N$ and $v^N(t) = (v_1^N(t), \ldots, v_N^N(t))$. Then the It\^{o} formula shows that $v^N(t)$ is a solution to \eqref{Wishart eigenvalue SDE} with initial value $v^N(0) = a u^N(0) = a \xi^N$. Noting  that the solution of \eqref{Wishart eigenvalue SDE} is non-negative and that $G_N(x,y) = (x+y)/N$ with non-negative variables satisfies  condition \eqref{compare-eigenvalue force condition},  we can apply the comparison principle in Corollary \ref{Thm-compare of eigenvalue} to obtain
\begin{align*}
	\lambda_i^N(t) \le v_i^N(t)
	= (t+a) u_i^N(t).
\end{align*}
The proof is concluded.
\end{proof}

\begin{lemma} \label{largest eigen uniform bound}
Assume the same conditions as in Lemma \ref{largest eigenvalue bound}. Then for any $T < \infty$, there exists a positive constant $C(a,T)$  depending only on $(a,T)$, such that for all $p \ge 1$,
\begin{align*}
	\mathbb{E} \left[ \sup_{t \in [0,T]} \langle |x|^p, L_N(t) \rangle \right] \le C(a,T)^p,
\end{align*}
almost surely for $N \ge (2p-1)/\alpha$ for some positive constant $\alpha$.
\end{lemma}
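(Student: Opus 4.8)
The plan is to leverage Lemma \ref{largest eigenvalue bound}, which gives the pathwise domination $\lambda_i^N(t) \le (t+a) u_i^N(t)$ where $u^N(t)$ is the stationary process with marginal law $\mathbb{P}^N$. Consequently $\langle |x|^p, L_N(t)\rangle = \frac{1}{N}\sum_{i=1}^N |\lambda_i^N(t)|^p \le (t+a)^p \frac{1}{N}\sum_{i=1}^N (u_i^N(t))^p = (t+a)^p \langle x^p, \nu_N(t)\rangle$, where $\nu_N(t)$ is the empirical measure of $(u_i^N(t))_i$ and we have used non-negativity of the Wishart eigenvalues and of the $u_i^N$'s. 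Taking the supremum over $t\in[0,T]$ bounds everything by $(T+a)^p$ times $\sup_{t\in[0,T]}\langle x^p,\nu_N(t)\rangle$. So the whole problem reduces to controlling the $p$-th moment of the empirical measure of the \emph{stationary} process $u^N$.

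The second step is to handle $\mathbb{E}[\sup_{t\in[0,T]}\langle x^p,\nu_N(t)\rangle]$. Since $u^N$ is stationary with marginal $\mathbb{P}^N$, each fixed-time expectation $\mathbb{E}\langle x^p,\nu_N(t)\rangle = \mathbb{E}_{\mathbb{P}^N}[\frac{1}{N}\sum_i x_i^p]$ is time-independent, and one computes it directly from the density \eqref{density of Wishart eigenvalue}: this is the expected $p$-th moment of the empirical spectral distribution of a (scaled) Wishart/Laguerre ensemble, which is well known to be bounded by $C^p$ for a constant $C$ depending only on $c=\lim P/N$, provided $N$ is large enough relative to $p$ (this is the role of the hypothesis $N\ge(2p-1)/\alpha$; concretely one expands the normalizing constant or uses the Laguerre orthogonal polynomial moment formulas, and the bound $C(a,T)^p$ emerges after absorbing $(T+a)^p$). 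To pass from the fixed-time bound to the bound on the supremum over $[0,T]$, I would apply Itô's formula to $t\mapsto \frac{1}{N}\sum_i (u_i^N(t))^p$ using the SDE \eqref{self-similarity sde}: the drift and martingale parts can be estimated in terms of $\langle x^q,\nu_N(s)\rangle$ for $q\le p$ (note the repulsion terms $\frac{u_i+u_j}{u_i-u_j}$ cancel in pairs upon symmetrization exactly as they do in \eqref{pair formula for empirical measure}), and then a Burkholder--Davis--Gundy estimate on the martingale part plus Grönwall on the drift part, combined with the stationary fixed-time bound, yields $\mathbb{E}[\sup_{t\in[0,T]}\langle x^p,\nu_N(t)\rangle]\le \tilde C^p$.

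The main obstacle I anticipate is the last step: getting the supremum bound with a constant that is still of the form $\tilde C^p$ (geometric in $p$) rather than something like $p! \,\tilde C^p$, since a crude BDG application introduces $p$-dependent constants that could spoil the geometric growth. The resolution is that stationarity does most of the work — one does not really need a delicate maximal inequality, because $\langle x^p,\nu_N(t)\rangle$ is a non-negative process whose expectation is constant in $t$, so a supermartingale-type or energy argument controlling the \emph{increments} over $[0,T]$ suffices, and the increment estimate only costs a factor polynomial in $T$ and $p$ that is absorbed into $C(a,T)^p$ after enlarging $C(a,T)$. One must also be careful that the constant $\alpha$ in the threshold $N\ge(2p-1)/\alpha$ is precisely what guarantees the Laguerre moment integrals in \eqref{density of Wishart eigenvalue} converge and stay bounded; this is where the condition $c\ge1$ (equivalently $P>N-1$) is used. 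The "almost surely" in the statement is automatic since the right-hand side is deterministic once the domination of Lemma \ref{largest eigenvalue bound} holds almost surely.
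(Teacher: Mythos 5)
Your reduction to the stationary process $u^N$ and the observation that fixed-time moments are bounded by stationarity is the same starting point the paper uses. However, your proposed resolution of the difficulty you correctly identify --- the $p$-dependence of the BDG constant --- is not right, and it is precisely this point that the paper's actual proof handles in a way you have not worked out.

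The source of your worry is a conflation of two kinds of ``$p$.'' You are not trying to control $\mathbb{E}[(\sup_t X_t)^p]$ for some scalar process $X$; you are trying to control $\mathbb{E}[\sup_t X_t(p)]$, where $X_t(p) = \langle |x|^p, L_N(t)\rangle$ is a \emph{degree-$p$ polynomial} observable of the eigenvalues. The paper applies It\^{o}'s formula directly to the $\lambda$-process (\eqref{pair formula for empirical measure} with $f(x)=x^{n+2}$, so $p=n+2$ is the degree) and then takes a \emph{first-moment} estimate of the supremum of the resulting martingale: in \eqref{Wishart-martingale uniform norm}, the BDG constant used in the recursion \eqref{Wishart-uniform moment estimation} is $\Lambda_1$, a universal constant. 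The $p$-dependence enters only through the explicit polynomial factors $(n+2)^q$ and $(t+a)^{(2n+3)q}$, which grow geometrically, not factorially, in $n$, and these are absorbed into $C(a,T)^{n+2}$. No higher-moment BDG, no factorial constants. The drift terms are handled not by Gr\"onwall but by plugging in the \emph{pointwise-in-$t$} bound $\mathbb{E}[\lambda_N^N(s)^k] \le (s+a)^k$ (valid for $k \le \alpha N$, which is where $N\gtrsim p$ is needed) obtained from the stationary tail estimate \eqref{eq-3.8}.

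Your proposed fix --- ``$\langle x^p,\nu_N(t)\rangle$ has constant expectation by stationarity, so a supermartingale-type or energy argument on the increments suffices'' --- does not work as stated. Constant expectation does not make the process a supermartingale (the repulsion drift has a positive component), and stationarity by itself gives no control on $\mathbb{E}[\sup_{t\le T} X_t]$: you still need the It\^{o} decomposition and a maximal inequality on the martingale part, which is exactly where you feared the constants would blow up. Once you realize only an $L^1$-BDG estimate is needed and the drift can be bounded term-by-term by the pointwise stationary moments, the argument closes cleanly. A small additional remark: the paper avoids your detour through $\nu_N$ by applying It\^{o} to $L_N$ directly; your version would also work but requires an extra change of variables to transfer the supremum bound from $\nu_N$ back to $L_N$, and the $(t+a)^p$ prefactor you mention is indeed then absorbed into $C(a,T)^p$.
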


\begin{proof}

Noting that the probability density of $u^N(t)$ considered in Lemma \ref{largest eigenvalue bound} is  \eqref{density of Wishart eigenvalue} for all $t$, we can obtain the following tail probability estimation with $\alpha$ being a positive constant independent of $N$,
\begin{align}\label{eq-3.8}
	\mathbb{P} \left( u_N^N(t) \ge x \right)
	= \mathbb{P}^N \left( x_N \ge x \right)
	\le \exp(-\alpha N x), \text{ for } t\ge0.
\end{align}
By Lemma \ref{largest eigenvalue bound} and \eqref{eq-3.8}, we have for $t\ge 0$,
\begin{align} \label{Wishart-largest eigen Lp norm estimation}
	\mathbb{E} \left[ \lambda_N^N(t)^k \right]
	&\le (t+a)^k \mathbb{E} \left[ u_N^N(t)^k \right] = k (t+a)^k \int_0^{\infty} x^{k-1} \mathbb{P} \left( u_N^N(t) \ge x \right) dx \nonumber \\
	&\le k (t+a)^k \int_0^{\infty} x^{k-1} \exp(-\alpha N x) dx = \dfrac{\Gamma(k+1)}{(\alpha N)^k} (t+a)^k \nonumber \\
	&\le (t+a)^k,
\end{align}
for $k \in [0,\alpha N]$, where $\Gamma(x)$ is the gamma function.

Now we apply \eqref{pair formula for empirical measure} and \eqref{quadratic variation of martingale} with $f(x) = x^{n+2}$ for $n \ge -1$ to obtain
\begin{align} \label{Wishart-pair formula of poly for empirical measure}
	\langle x^{n+2}, L_N(t) \rangle
	&= \langle x^{n+2}, L_N(0) \rangle + M_{x^{n+2}}^N(t) + \dfrac{(P+n+1)(n+2)}{N} \int_{0}^t \langle x^{n+1}, L_N(s) \rangle ds \nonumber \\
	&\quad + \dfrac{n+2}{2} \int_{0}^t \iint \sum_{k=0}^n x^k y^{n-k} (x+y) L_N(s)(dx) L_N(s)(dy) ds.
\end{align}
where the martingale term $M_{x^{n+2}}^N(t)$ has the quadratic variation
\begin{align*}
	\langle M_{x^{n+2}}^N \rangle_t
	= \dfrac{4(n+2)^2}{N^2} \int_{0}^t \langle x^{2n+3}, L_N(s) \rangle ds.
\end{align*}
By the Cauchy-Schwarz inequality, Burkholder-Davis-Gundy inequality, H\"{o}lder inequality and the estimation \eqref{Wishart-largest eigen Lp norm estimation}, for $(2n+3)q \le \alpha N$, $q \in \mathbb{N}$, and $\Lambda_q$ being a positive constant depending only on $q$,
\begin{align} \label{Wishart-martingale uniform norm}
	&\mathbb{E} \left[ \left| \sup_{u \in [0,t]} M_{x^{n+2}}^N(u) \right|^q \right]
	\le \left\{ \mathbb{E} \left[ \sup_{u \in [0,t]} M_{x^{n+2}}^N(u)^{2q} \right] \right\}^{1/2} \nonumber \\
	\le& \sqrt{\Lambda_q} \left\{ \mathbb{E} \left[ \langle M_{x^{n+2}}^N \rangle_t^q \right] \right\}^{1/2} \le \dfrac{2^q(n+2)^q\sqrt{\Lambda_q}}{N^q} \left\{ \mathbb{E} \left[ \int_{0}^t \langle x^{2n+3}, L_N(s) \rangle ds \right]^q \right\}^{1/2} \nonumber \\
	= &\dfrac{2^q(n+2)^q\sqrt{\Lambda_q}}{N^q} \left\{ \mathbb{E} \left[ \int_{0}^t \dfrac{1}{N} \sum_{i=1}^N \lambda_i^N(s)^{2n+3} ds \right]^q \right\}^{1/2} \nonumber \\
	\le & \dfrac{2^q(n+2)^q\sqrt{\Lambda_q}}{N^q} \left\{ \mathbb{E} \left[ \int_{0}^t \lambda_N^N(s)^{2n+3} ds \right]^q \right\}^{1/2} \nonumber \\
	\le & \dfrac{2^q(n+2)^q\sqrt{\Lambda_q}}{N^q} \left\{ \mathbb{E} \left[ t^{q-1} \int_{0}^t \lambda_N^N(s)^{(2n+3)q} ds \right] \right\}^{1/2} \nonumber \\
	\le &\dfrac{2^q(n+2)^q\sqrt{\Lambda_q}}{N^q} \left\{ t^{q-1} \int_{0}^t (s+a)^{(2n+3)q} ds \right\}^{1/2} \nonumber \\
	\le &\dfrac{2^q(n+2)^q\sqrt{\Lambda_q t^q (t+a)^{(2n+3)q}}}{N^q}.
\end{align}
 Defining, for $k \ge 1,$
\begin{align*}
	E_t^N(k) = \mathbb{E} \left[ \sup_{u \in [0,t]} \langle x^k, L_N(u) \rangle \right],
\end{align*}
it follows from  \eqref{Wishart-pair formula of poly for empirical measure} that for $n \ge -1$,
\begin{align} \label{Wishart-uniform moment estimation}
	E_t^N(n+2) &\le E_0^N(n+2) + \mathbb{E} \left[ \sup_{u \in [0,t]} M_{x^{n+2}}^N(u) \right] + \dfrac{(P+n+1)(n+2)}{N} \mathbb{E} \left[ \sup_{u \in [0,t]} \int_{0}^u \langle x^{n+1}, L_N(s) \rangle ds \right] \nonumber \\
	&\quad + \dfrac{n+2}{2} \mathbb{E} \left[ \sup_{u \in [0,t]} \int_{0}^u \iint \sum_{k=0}^n x^k y^{n-k} (x+y) L_N(s)(dx) L_N(s)(dy) ds  \right].
\end{align}
For the third and the fourth terms on the right-hand side of \eqref{Wishart-uniform moment estimation},  we have by \eqref{Wishart-largest eigen Lp norm estimation},
\begin{align*}
	&\dfrac{(P+n+1)(n+2)}{N} \mathbb{E} \left[ \sup_{u \in [0,t]} \int_{0}^u \langle x^{n+1}, L_N(s) \rangle ds \right]\\
		\le& \dfrac{(P+n+1)(n+2)}{N} \mathbb{E} \left[ \int_{0}^t |\lambda_N^N(s)|^{n+1} ds \right] \\
	\le &\dfrac{(P+n+1)(n+2)}{N} \int_0^t (s+a)^{n+1} ds \\
	\le& \dfrac{(P+n+1)(n+2)t(t+a)^{n+1}}{N},
\end{align*}
and 
\begin{align*}
	&\quad \dfrac{n+2}{2} \mathbb{E} \left[ \sup_{u \in [0,t]} \int_{0}^u \iint \sum_{k=0}^n x^k y^{n-k} (x+y) L_N(s)(dx) L_N(s)(dy) ds  \right] \\
	&= \dfrac{n+2}{2} \sum_{k=0}^n \mathbb{E} \left[ \sup_{u \in [0,t]} \int_{0}^u \langle x^{k+1}, L_N(s) \rangle \langle y^{n-k}, L_N(s) \rangle + \langle x^k, L_N(s) \rangle \langle y^{n+1-k}, L_N(s) \rangle ds  \right] \\
	&\le \dfrac{(n+2)}{2} \sum_{k=0}^n \mathbb{E} \left[ \sup_{u \in [0,t]} \int_{0}^u |\lambda_N^N(s)|^{k+1} |\lambda_N^N(s)|^{n-k} + |\lambda_N^N(s)|^k |\lambda_N^N(s)|^{n+1-k} ds  \right] \\
	&\le (n+2) (n+1) \mathbb{E} \left[ \int_0^t |\lambda_N^N(s)|^{n+1} ds \right] \\
	&\le (n+2) (n+1) t(t+a)^{n+1}
\end{align*}
for $n+1\le \alpha N$. Hence, by \eqref{Wishart-martingale uniform norm}, \eqref{Wishart-uniform moment estimation}, and the above two estimations, for $n\ge -1$ such that $2n+3 \le \alpha N$ and $t \in [0,T]$, we have
\begin{align*}
	E_t^N(n+2) &\le E_0^N(n+2) + \dfrac{2(n+2)\sqrt{\Lambda_1 t (t+a)^{2n+3}}}{N} \\
	&\quad+ \dfrac{(P+n+1)(n+2)t(t+a)^{n+1}}{N} + (n+2) (n+1) t(t+a)^{n+1}.
\end{align*}
Thus, for all $-1 \le n \le \frac{\alpha N-3}{2}$,   noting that $E_0^N(n+2)\le \E[\lambda_N^N(0)^{n+2}]\le a^{n+2}$ by \eqref{Wishart-largest eigen Lp norm estimation}, we have
\begin{align*}
	E_T^N(n+2) \le C_{a,T}^{n+2},
\end{align*}
for some positive constant $C_{a,T}$ depending on $(a,T)$ only.

 The proof is concluded.
\end{proof}

Now we are ready to prove the following CLT for the eigenvalues of the scaled Wishart process $X^N_t = \tilde{B}^\intercal(t) \tilde{B}(t) / N$, where $\tilde{B}(t)$ is a $P \times N$ Brownian matrix with $P > N-1$.  Noting that under the conditions in Lemma \ref{largest eigenvalue bound},  Lemma \ref{largest eigen uniform bound} implies $\limsup_{N\to \infty}\sup_N \lambda_N^N(0)<\infty$ almost surely. One can check that the conditions (A) - (D) in \cite{Song2019} are satisfied, hence  $\{L_N(t), t\in[0,T]\}_{n\in \mathbb N}$ is tight (see also \cite[Remark 3.3]{Song2019}), and we  know that it converges to $\{\mu_t, t\in[0,T]\}$,  where $\mu_t$ is a scaled Marchenko-Pastur law. Recall that $c=\lim\limits_{N\to \infty} P/N$ and that $\mathcal L_t^N(f)$ is defined by \eqref{eq-functional} in Theorem \ref{Thm-Wishart}.

\begin{theorem} \label{CLT for Wishart}
Assume that $\lim_{N \rightarrow \infty} |P-cN| = 0$, and that for any polynomial $f(x) \in \mathbb{R}[x]$, the initial value $\mathcal{L}_0^N(f)$ converges in probability to a random variable $\L_0(f)$.  Besides, assume the same condition on $\{\lambda_i^N(0), i=1, 2, \dots, N\}$ as in Lemma \ref{largest eigenvalue bound} for all $N\in \mathbb N$. Furthermore, assume that for all $n\in \mathbb N$,
\begin{align}\label{initial-bound}
	\sup_N\E[|N( \langle x^n, L_N(0) \rangle - \langle x^n, \mu_0 \rangle)|^q]<\infty,
\end{align}
for all $q\ge 1$.  Then for any $0 < T < \infty$, there exists a family of processes $\{\L_t(f), t\in [0,T]\}_{f \in \mathbb{R}[x]}$, such that for any $n \in \mathbb{N}$ and any polynomials $P_1, \ldots, P_n \in \mathbb{R}[x]$, the vector-valued process $(\mathcal{L}_t^N(P_1), \ldots, \mathcal{L}_t^N(P_n))_{t \in [0,T]}$ converges to $(\L_t(P_1), \ldots, \L_t(P_n))_{t \in [0,T]}$ in distribution, as $N\to \infty$.

The limit process $\{\L_t(f), t\in [0,T]\}_{f \in \mathbb{R}[x]}$ is characterized by the following properties.
\begin{enumerate}
	\item For $P_1, P_2 \in \mathbb{R}[x]$, $\alpha_1, \alpha_2 \in \mathbb{R}$, $t \in [0,T]$,
	\begin{align*}
		\L_t(\alpha_1 P_1 + \alpha_2 P_2) = \alpha_1 \L_t(P_1) + \alpha_2 \L_t(P_2).
	\end{align*}
	\item The basis $\{\L_t(x^n), t\in [0,T]\}_{n \in \mathbb{N}}$ of $\{\L_t(P), t\in [0,T]\}_{P \in \mathbb{R}[x]}$ satisfies
	\begin{align*}
		\L_t(1) = 0, \quad \L_t(x) = \L_0(x) + G_t(x),
	\end{align*}
	and for $n \ge 0$,
	\begin{align} \label{Wishart-limit equation}
		\L_t(x^{n+2})
		&= \L_0(x^{n+2}) + c(n+2) \int_{0}^t \L_s(x^{n+1}) ds + (n+2)(n+1) \int_{0}^t \langle x^{n+1}, \mu_s \rangle ds \nonumber \\
		&+ (n+2) \sum_{k=0}^n \int_0^t \L_s (x^{n-k}) \mu_s(x^{k+1}) + \L_s (x^{n+1-k}) \mu_s(x^k) ds + G_t(x^{n+2}),
	\end{align}
	where $\{G_t(x^n), t \in [0,T]\}_{n \in \mathbb{N}}$ is a family of  centered Gaussian processes with covariance
	\begin{align}\label{Wishart-Gaussian covariance}
		\mathbb{E} \left[ G_t(x^n) G_s(x^m) \right]
		= 4mn \int_0^{t \wedge s} \langle x^{n+m-1}, \mu_u \rangle du, \quad n,m \ge 1.
	\end{align}
\end{enumerate}
\end{theorem}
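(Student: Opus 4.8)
The plan is to apply Corollary \ref{Coro-Lp} to reduce the problem to the asymptotic behaviour of the martingale terms $N M_{x^n}^N(t)$, and then to identify the limit of the It\^o system \eqref{Wishart-pair formula of poly for empirical measure} term by term. First I would verify the hypotheses of Corollary \ref{Coro-Lp}: from \eqref{eq-3.6} we have $b_N(x)=P/N$, $NG_N(x,y)=G(x,y)=x+y$, so the convergence-speed condition \eqref{conv-speed} reads $N|P/N-c|=|P-cN|\to0$, which is exactly the assumption $\lim_N|P-cN|=0$; and $G(x,x)=2x$ together with its derivative have polynomial (indeed linear) growth. Lemma \ref{largest eigen uniform bound} supplies the moment bound \eqref{eq-p-bound} for $N\ge(2p-1)/\alpha$, i.e.\ for $N\ge cp$ with $c=2/\alpha$. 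Hence Corollary \ref{Coro-Lp} applies to every polynomial test function $f\in\mathbb R[x]\subset C^3(\mathbb R)$: $Q_t^N(f)-NM_f^N(t)\to0$ in $L^p$ uniformly in $t\in[0,T]$ for all $p\ge1$, and consequently (specializing $Q_t^N$ from \eqref{eq-Q} with $G(x,y)=x+y$, $b(x)=c$) Theorem \ref{Thm-Wishart} holds for polynomial $f$, giving that $(NM_{x^{n_1}}^N,\dots,NM_{x^{n_k}}^N)$ converges in distribution to a centered Gaussian process whose covariance is computed from \eqref{cov} with $f_i'(x)f_j'(x)G(x,x)= n_in_m x^{n_i-1}x^{n_m-1}\cdot 2x = 2n_in_m x^{n_i+n_m-1}$; this yields \eqref{Wishart-Gaussian covariance} with the factor $4n m$ coming from the $2\cdot 2 n m$.

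Next I would take $f(x)=x^{n+2}$ in \eqref{Wishart-pair formula of poly for empirical measure} and pass to the fluctuation scale by subtracting $N\langle x^{n+2},\mu_t\rangle$ via the limiting equation \eqref{limit measure equation general}: this produces an identity of the form $\mathcal L_t^N(x^{n+2}) = \mathcal L_0^N(x^{n+2}) + NM_{x^{n+2}}^N(t) + (\text{drift fluctuations})$. I would then analyze each drift term. The linear term $\frac{(P+n+1)(n+2)}{N}\int_0^t\langle x^{n+1},L_N(s)\rangle\,ds$, after centering, splits into $c(n+2)\int_0^t\mathcal L_s^N(x^{n+1})\,ds$ plus $(n+2)(n+1)\int_0^t\langle x^{n+1},L_N(s)\rangle\,ds$ plus a term of order $N^{-1}\cdot(|P-cN|+1)\cdot(\text{moment bounds})$ that vanishes in $L^p$; here the weak convergence and uniform moment bounds replace $\langle x^{n+1},L_N(s)\rangle$ by $\langle x^{n+1},\mu_s\rangle$ in the non-fluctuating part. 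For the quadratic (double-integral) term $\frac{n+2}{2}\int_0^t\iint\sum_{k=0}^n x^ky^{n-k}(x+y)\,L_N(s)(dx)L_N(s)(dy)\,ds$, I would write each product measure as $L_N=\mu_s+\frac1N(L_N-\mu_s)\cdot N$ and expand: the $\mu_s\otimes\mu_s$ part cancels against the corresponding piece of \eqref{limit measure equation general}, the cross terms $\mu_s\otimes(L_N-\mu_s)$ and $(L_N-\mu_s)\otimes\mu_s$ each contribute, by symmetry, a term of the form $(n+2)\sum_{k=0}^n\int_0^t[\mathcal L_s^N(x^{k+1})\mu_s(x^{n-k})+\mathcal L_s^N(x^k)\mu_s(x^{n+1-k})]\,ds$ (matching \eqref{Wishart-limit equation} after relabelling), and the $(L_N-\mu_s)\otimes(L_N-\mu_s)$ part carries a prefactor $N/2\cdot N^{-2}\cdot N = N^{0}$... more precisely it is $\frac1N$ times a product of two fluctuations, which by \eqref{initial-bound}-type propagation is $O_{L^p}(N^{-1})$ and hence negligible. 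Assembling these gives that $\mathcal L_t^N(x^{n+2})$ equals a deterministic linear functional of $\{\mathcal L_s^N(x^j):j\le n+1,\ s\le t\}$, plus explicit $\mu_s$-integrals, plus $NM_{x^{n+2}}^N(t)$, plus an $L^p$-null remainder.

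Finally I would set up the recursion. Since $\mathcal L_t^N(1)\equiv0$ and $\mathcal L_t^N(x)=\mathcal L_0^N(x)+NM_x^N(t)$ (the drift in \eqref{Wishart eigenvalue SDE} summed over $i$ gives exactly $P$, independent of the $\lambda_i$, so it cancels with the limit), I can propagate convergence in distribution upward in $n$: assuming $(\mathcal L^N_\cdot(x^j))_{j\le n+1}$ jointly converges, adjoining the martingale $NM^N_{x^{n+2}}(\cdot)$ (which converges jointly with all lower martingales to the Gaussian family $G_\cdot(x^j)$ by the already-established multivariate CLT) and using the continuous-mapping/Slutsky argument on the explicit affine relation just derived, I obtain joint convergence up to level $n+2$; the initial data $\mathcal L_0^N(f)\to\mathcal L_0(f)$ is given by hypothesis and \eqref{initial-bound} guarantees the needed uniform integrability so that the limiting drift integrals are the genuine limits. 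Passing to the limit in the identity yields \eqref{Wishart-limit equation}, with the Gaussian $G_t(x^{n+2})$ as the limit of $NM^N_{x^{n+2}}(t)$; linearity (property (1)) follows as in Proposition \ref{Prop on linearity of the Gaussian process} from the linearity of $f\mapsto M_f^N$.

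The main obstacle I anticipate is the bookkeeping and justification for the quadratic term: one must show that the $(L_N-\mu_s)^{\otimes2}$ contribution, carrying the dangerous prefactor $N$, is genuinely $o(1)$ in $L^p$ uniformly on $[0,T]$. This requires an $a priori$ $L^p$-bound of order $O(1)$ (not $O(N)$) on the fluctuations $\mathcal L_t^N(x^j)$ themselves, uniformly in $N$ and $t$; the natural route is a Gronwall argument applied to the centered It\^o identity, bootstrapping from the martingale $L^p$-bound in \eqref{Wishart-martingale uniform norm}, the drift moment bounds from Lemma \ref{largest eigen uniform bound}, and the initial bound \eqref{initial-bound}, handled inductively in $n$. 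Once such uniform fluctuation bounds are in hand, every error term is controlled by H\"older's inequality against the moment estimates, and the remaining steps are routine.
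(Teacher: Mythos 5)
Your proposal is correct and follows essentially the same route as the paper: verify the hypotheses of Corollary~\ref{Coro-Lp} via Lemma~\ref{largest eigen uniform bound} and the condition $|P-cN|\to 0$, identify the martingale limit and covariance, expand the It\^o identity into fluctuation terms, control the dangerous quadratic remainder by a uniform $L^q$ bound on $\mathcal L_t^N(x^j)$ obtained inductively in $j$, then pass to the limit recursively in degree. One small simplification worth noting: no Gronwall inequality is actually needed for the a priori fluctuation bounds, because the centered identity \eqref{eq-3.16} expresses $\mathcal L_t^N(x^{n+2})$ solely in terms of $Q_t^N(x^{n+2})$ (whose $L^q$ bound comes from \eqref{Wishart-martingale uniform norm} plus the $Q^N-NM^N\to 0$ convergence), the initial data, and the strictly lower-order $\mathcal L_s^N(x^j)$, $j\le n+1$; the recursion is lower-triangular in the degree, so a direct induction closes the estimate without iterating in time.
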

\begin{proof}

 First, note that by  Lemma \ref{largest eigen uniform bound} and  Corollary \ref{Coro-Lp},  $Q_t^N(x^n)$ defined by \eqref{eq-Q} converges in distribution to a centered Gaussian family $\{G_t(x^n), t\in[0,T]\}_{n\in \mathbb N}$ with covariance given by \eqref{Wishart-Gaussian covariance}.
Furthermore, by  \eqref{eq-functional}, \eqref{eq-Q} and \eqref{eq-3.6}, for $n \ge -1$, we have 
\begin{align}
	Q_t^N(x^{n+2}) &= \mathcal{L}_t^N(x^{n+2}) - \mathcal{L}_0^N(x^{n+2}) - c(n+2) \int_{0}^t \mathcal{L}_s^N(x^{n+1}) ds - (n+2)(n+1) \int_{0}^t \langle x^{n+1}, \mu_s \rangle ds\notag \\
	&\quad- (n+2) \int_0^t \mathcal{L}_s^N\left( \int \sum_{k=0}^n x^k y^{n-k} (x+y) \mu_s(dx) \right) ds\notag \\
	&\quad- \dfrac{N(n+2)}{2} \int_{0}^t \iint \sum_{k=0}^n x^k y^{n-k} (x+y) [L_N(s)(dx) - \mu_s(dx)] [L_N(s)(dy) - \mu_s(dy)] ds \notag\\
	&= \mathcal{L}_t^N(x^{n+2}) - \mathcal{L}_0^N(x^{n+2}) - c(n+2) \int_{0}^t \mathcal{L}_s^N(x^{n+1}) ds - (n+2)(n+1) \int_{0}^t \langle x^{n+1}, \mu_s \rangle ds \notag\\
	&\quad- (n+2) \sum_{k=0}^n \int_0^t \mathcal{L}_s^N (x^{n-k}) \mu_s(x^{k+1}) + \mathcal{L}_s^N (x^{n+1-k}) \mu_s(x^k) ds \notag\\
	&\quad- \dfrac{(n+2)}{2N} \sum_{k=0}^n \int_{0}^t \mathcal{L}_s^N (x^{n-k}) \mathcal{L}_s^N (x^{k+1}) + \mathcal{L}_s^N (x^{n+1-k}) \mathcal{L}_s^N (x^k) ds. \label{eq-3.16}
\end{align}

 In Corollary \ref{Thm for bounded general Wishart process} and Corollary \ref{Coro-Lp}, we have shown $Q_t^N(x^{n+2})-NM_{x^{n+2}}^N$ converges to $0$ almost surely and in $L^q$ for all $q \ge 1$ as $N \rightarrow \infty$, uniformly in $t \in [0,T]$. Thus, by \eqref{Wishart-martingale uniform norm}, \eqref{eq-3.16}, and the condition \eqref{initial-bound}, it is not difficult to show
\begin{align*}
	\sup_{N \in \mathbb{N}} \mathbb{E} \left[ \sup_{t \in [0,T]} \left| \mathcal{L}_t^N(x^n) \right|^q \right] < \infty,
\end{align*}
for $q \ge 1$ and $n\in \mathbb N$ by using an induction argument on $n$.

To estimate the last term on the right-hand side of \eqref{eq-3.16},   we apply the Cauchy-Schwarz inequality to obtain, for $0 \le k \le n$,
\begin{align*}
	&\quad \mathbb{E} \left[ \sup_{t \in [0,T]} \left| \dfrac{n+2}{2N} \int_0^t \mathcal{L}_s^N (x^{n-k}) \mathcal{L}_s^N (x^{k+1}) ds \right|^q \right] \\
	&\le \dfrac{(n+2)^q T^q}{2^q N^q} \mathbb{E} \left[ \sup_{t \in [0,T]} \left| \mathcal{L}_t^N (x^{n-k}) \right|^q \sup_{t \in [0,T]} \left| \mathcal{L}_t^N (x^{k+1}) \right|^q \right] \\
	&\le \dfrac{(n+2)^q T^q}{2^q N^q} \left\{ \mathbb{E} \left[ \sup_{t \in [0,T]} \left| \mathcal{L}_t^N (x^{n-k}) \right|^{2q}  \right] \mathbb{E} \left[ \sup_{t \in [0,T]} \left| \mathcal{L}_t^N (x^{k+1}) \right|^{2q} \right] \right\}^{1/2} \\
	&\le C(n,T,q) N^{-q},
\end{align*}
for some constant $C(n,T,q)$. Thus, the last term on the right-hand side of \eqref{eq-3.16} converges to 0 
in $L^q$ for $q > 1$, as $N$ tends to infinity. By Markov inequality and Borel-Cantelli Lemma, one can also obtain the almost sure convergence. 

If we define
\begin{align}\label{eq-37}
	\tilde{Q}_t^N(x^{n+2})
	&= \mathcal{L}_t^N(x^{n+2}) - \mathcal{L}_0^N(x^{n+2}) - c(n+2) \int_{0}^t \mathcal{L}_s^N(x^{n+1}) ds -  (n+2)(n+1)\int_{0}^t \langle x^{n+1}, \mu_s \rangle ds\notag \\
	&\quad- (n+2) \sum_{k=0}^n \int_0^t \mathcal{L}_s^N (x^{n-k}) \mu_s(x^{k+1}) + \mathcal{L}_s^N (x^{n+1-k}) \mu_s(x^k) ds,
\end{align}
for $n \ge -1$, then the difference $|\tilde{Q}_t^N(x^{n+2}) - Q_t^N(x^{n+2})|$ converges to $0$ almost surely and in $L^q$ for $q > 1$. Thus, Corollary \ref{Thm for bounded general Wishart process} implies that $(\tilde{Q}_t^N(x^k), \tilde{Q}_t^N(x^{k-1}), \ldots, \tilde{Q}_t^N(x))_{t \in [0,T]}$ converges in distribution to $(G_t(x^k), G_t(x^{k-1}), \ldots, G_t(x))_{t \in [0,T]}$ with covariance \eqref{Wishart-Gaussian covariance}.

Now we deduce the convergence in distribution of $(\mathcal{L}_t^N(x^k))_{t \in [0,T]}$ for $k \in \mathbb{N}$. First of all, we have $\mathcal{L}_t^N(1) = 0$ and $\mathcal{L}_t^N(x) = \mathcal{L}_0^N(x) + \tilde{Q}_t^N(x)$ converges in distribution since the initial value converges in probability. By induction,  if we assume $(\mathcal{L}_t^N(x^k), \ldots, \mathcal{L}_t^N(x))_{t \in [0,T]}$ convergence in distribution to $(\L_t(x^k), \ldots, \L_t(x))_{t \in [0,T]}$, then the  convergence in distribution of $(\tilde{Q}_t^N(x^{k+1}), \tilde{Q}_t^N(x^k), \ldots, \tilde{Q}_t^N(x))_{t \in [0,T]}$  implies that $(\tilde{Q}_t^N(x^{k+1}), \mathcal{L}_t^N(x^k), \ldots, \mathcal{L}_t^N(x))_{t \in [0,T]}$ converges in distribution, and hence $(\mathcal{L}_t^N(x^{k+1}), \ldots, \mathcal{L}_t^N(x))_{t \in [0,T]}$ converges in distribution.

Thus, by \eqref{eq-37} we have
\begin{align*}
	G_t(x^{n+2})
	&\overset{d}{=} \mathcal{L}_t(x^{n+2}) - \mathcal{L}_0(x^{n+2}) - c(n+2) \int_{0}^t \mathcal{L}_s(x^{n+1}) ds - (n+2)(n+1) \int_{0}^t \langle x^{n+1}, \mu_s \rangle ds \\
	&\quad- (n+2) \sum_{k=0}^n \int_0^t \mathcal{L}_s (x^{n-k}) \mu_s(x^{k+1}) + \mathcal{L}_s (x^{n+1-k}) \mu_s(x^k) ds,
\end{align*}
where ``$\overset d=$'' means equality in distribution. The proof is concluded.
\end{proof}

\begin{remark} By the self-similarity of Brownian motion, when $X_0^N = 0$, we have $X_t^N \overset{d}{=} t X_1^N$. Thus, $(\lambda_1^N(t), \ldots, \lambda_N^N(t))\overset d=(t \lambda_1^N(1), \ldots, t \lambda_N^N(1))$. Therefore,
\begin{align*}
	\langle f(x), L_N(t) \rangle = \dfrac{1}{N} \sum_{i=1}^N f(\lambda_i^N(t)) \overset d= \dfrac{1}{N} \sum_{i=1}^N f(t \lambda_i^N(1)) = \langle f(tx), L_N(1) \rangle,
\end{align*}
 and
\begin{align*}
	\langle f(x), \mu_t \rangle \overset d= \langle f(tx), \mu_1 \rangle.
\end{align*}
Hence, $\mathcal{L}_t^N(f(x)) \overset d= \mathcal{L}_1^N(f(tx))$, and thus, $\L_t(f(x)) \overset d= \L_1(f(tx))$. With these identities and the linearity of $\L_t(\cdot)$, \eqref{Wishart-limit equation} can be simplified as, for $n\ge 0$,
\begin{align}
	\L_1(x^{n+2}) &= c \L_1(x^{n+1}) + (n+1) \langle x^{n+1}, \mu_1 \rangle + \sum_{k=0}^n \left( \L_1(x^{n-k}) \langle x^{k+1}, \mu_1 \rangle + \L_1(x^{n+1-k}) \langle x^{k}, \mu_1 \rangle \right) \notag\\
	&\quad +  \dfrac{1}{t^{n+2}} G_t(x^{n+2}), \quad t > 0,\label{eq-3.19}
\end{align}
where the Gaussian family $\{G_t(x^n), t\in[0,T]\}_{n\in \mathbb N}$ has the covariance functions
\begin{align*}
	\mathbb{E} \left[ G_t(x^n) G_s(x^m) \right]
	=  \dfrac{4mn}{m+n} (t\wedge s)^{n+m}\langle x^{n+m-1}, \mu_1 \rangle, \quad n,m \ge 1.
\end{align*}
Note that the case $t=1$ corresponds to the classical Wishart matrix, and $\mu_1$ is the Marchenko–Pastur law. More precisely, recalling that $\L_1(1) = 0$ and  $\L_1(x) = G_1(x) $,  we get by \eqref{eq-3.19} $\L_1(x^2) = \langle x, \mu_1 \rangle + (c+1) G_1(x) + G_1(x^2)$, for $m\ge 3$, and more generally $\L_1(x^m) = c_{m,0} + c_{m,1} G_1(x) + \ldots + c_{m,m} G_1(x^m)$ for some coefficients $(c_{m,j})_{0 \le j \le m}$ which are determined recursively by \eqref{eq-3.19}.
\end{remark}

We now study a more general particle systems:
\begin{align} \label{Wishart SDE with drift}
	d \lambda_i^N(t)
	= 2 \dfrac{\sqrt{\lambda_i^N(t)}}{\sqrt{N}} dW_i(t) + \left( b_N(\lambda_i^N(t)) + \dfrac{1}{N} \sum_{j:j \neq i} \dfrac{\lambda_i^N(t) + \lambda_j^N(t)}{\lambda_i^N(t) - \lambda_j^N(t)} \right) dt, ~ 1\le i \le N, ~ t\ge 0.
\end{align}
Compared to \eqref{Wishart eigenvalue SDE}, the constant $P/N$ is replaced by a function $b_N(x)$ that will be assumed to converge to a constant $c$ in Corollary \ref{general wishart} below. Despite  the extension being small, the system \eqref{Wishart SDE with drift} may not correspond to eigenvalues of a matrix SDE, and may not have an explicit joint density function or stationary distribution, and hence cannot be treated in the same way as for the eigenvalues of Wishart process.

\begin{corollary} \label{general wishart}
Consider the SDEs \eqref{Wishart SDE with drift}, where $b_N(x)$ satisfies, for some constant  $c \ge 1$,
\begin{align} \label{condition on b_N}
	\lim_{N \rightarrow \infty} N \| b_N(x) - c \|_{L^{\infty}(\mathbb{R})} = 0.
\end{align}
Assume the same initial conditions as  in Theorem \ref{CLT for Wishart}. Then the conclusion of Theorem \ref{CLT for Wishart} still holds.
\end{corollary}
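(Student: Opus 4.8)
The plan is to observe that the proof of Theorem~\ref{CLT for Wishart} uses the fact that one works with genuine Wishart eigenvalues, rather than with the more general system \eqref{Wishart SDE with drift}, at exactly one point, namely Lemma~\ref{largest eigen uniform bound}, whose proof relied on the explicit stationary density \eqref{density of Wishart eigenvalue}; everything else goes through after replacing the constant $P/N$ by the function $b_N$. Thus the first step is to record that for \eqref{Wishart SDE with drift} one still has $NG_N(x,y)=x+y=:G(x,y)$ \emph{exactly} (the coefficients $g_N,h_N$ being unchanged, cf. \eqref{eq-3.6}), while $b_N\to c=:b$ with $N\|b_N-c\|_{L^{\infty}(\mathbb R)}\to 0$ by \eqref{condition on b_N}; hence the speed-of-convergence hypothesis \eqref{conv-speed} of Theorem~\ref{Thm-Wishart} holds, the limit measure equation \eqref{limit measure equation general} coincides with the one for the Wishart process, so $\{\mu_t\}$ is again the (scaled) Marchenko--Pastur flow, and tightness together with the weak convergence $L_N\Rightarrow\mu$ follow by checking conditions (A)--(D) of \cite{Song2019} exactly as in the Wishart case, the only input being the moment bound established next.

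The core of the argument is to recover the conclusion of Lemma~\ref{largest eigen uniform bound} for \eqref{Wishart SDE with drift} by comparison. Fix $T<\infty$. By \eqref{condition on b_N} there is $\varepsilon_N$ with $N\varepsilon_N\to 0$ and $b_N(x)\le c+\varepsilon_N$ for all $x$ and all large $N$; since $\varepsilon_N=o(1/N)$ one can pick an integer $P_N$ with $P_N\ge N>N-1$, $P_N/N\to c$ and $P_N/N\ge c+\varepsilon_N$. Let $\theta^N=(\theta^N_1,\dots,\theta^N_N)$ solve the Wishart eigenvalue system \eqref{Wishart eigenvalue SDE} with parameter $P_N$, driven by the same Brownian motions $W_i$, started from $\theta^N_i(0)=a\xi^N_i$ with $\xi^N$ having density \eqref{density of Wishart eigenvalue} for parameter $P_N$, so that $\lambda^N_i(0)\le\theta^N_i(0)$ a.s. (the initial-value hypothesis; by the monotonicity of \eqref{density of Wishart eigenvalue} in $P$ one may enlarge $P_N$ if needed). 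Since $g_N(u)h_N(u)=\sqrt{u/N}$ satisfies $|g_Nh_N(u)-g_Nh_N(v)|\le\sqrt{|u-v|/N}=:\rho(|u-v|)$ with $\int_{0^+}\rho^{-2}(r)\,dr=\infty$, since $b_N$ is continuous and $b_N\le P_N/N$, and since $G_N(x,y)=(x+y)/N$ obeys the force condition \eqref{compare-eigenvalue force condition} on the non-negative orthant, Corollary~\ref{Thm-compare of eigenvalue} gives $\lambda^N_i(t)\le\theta^N_i(t)$ for all $t\ge0$, $1\le i\le N$, a.s. Moreover $\lambda^N$ has non-negative components: the drift of the smallest particle at the origin equals $b_N(0)-(N-1)/N$, which is $>0$ for $c\ge1$ and $N$ large, so $\lambda_1^N$ never reaches $0$ (alternatively this follows from the conditions of \cite{Graczyk2014}). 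Hence $\langle|x|^p,L_N(t)\rangle\le\langle x^p,L^{\theta}_N(t)\rangle$, and applying Lemma~\ref{largest eigen uniform bound} to the genuine Wishart process $\theta^N$ yields $\mathbb E[\sup_{t\in[0,T]}\langle|x|^p,L_N(t)\rangle]\le C(a,T)^p$ for all $p\ge1$ and all $N$ large relative to $p$.

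Given this bound the rest is a transcription of the proof of Theorem~\ref{CLT for Wishart}. Since $G(x,x)=2x$ and its derivative have polynomial growth, Corollary~\ref{Coro-Lp} applies and shows $Q_t^N(x^n)-NM^N_{x^n}(t)\to0$ in every $L^q$ uniformly on $[0,T]$ and that $(Q_t^N(x^n))$ converges to the centered Gaussian family with covariance \eqref{Wishart-Gaussian covariance}. Plugging $f(x)=x^{n+2}$ into the It\^o formula \eqref{pair formula for empirical measure} reproduces the same recursion as in the Wishart case, the only difference being that the drift term is now $N\langle(n+2)x^{n+1}b_N(x),L_N(s)\rangle$, whereas in the pure Wishart case $b_N$ is the constant $P/N$; writing $b_N=c+(b_N-c)$, the discrepancy $N\langle(n+2)x^{n+1}(b_N(x)-c),L_N(s)\rangle$ is bounded by $(n+2)N\|b_N-c\|_{L^{\infty}(\mathbb R)}\langle|x|^{n+1},L_N(s)\rangle$ and tends to $0$ in every $L^q$ by \eqref{condition on b_N} and the moment bound, precisely as in \eqref{error of drift term}. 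The induction on $n$ carried out in the proof of Theorem~\ref{CLT for Wishart} then goes through without change and identifies $\{\mathcal L_t(x^n)\}_{n}$ through the recursion \eqref{Wishart-limit equation} (with $c$ playing the role of $P/N$) and the Gaussian covariance \eqref{Wishart-Gaussian covariance}.

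The single genuine obstacle is the moment estimate of the second step: without the explicit invariant density \eqref{density of Wishart eigenvalue}, Lemmas~\ref{largest eigenvalue bound}--\ref{largest eigen uniform bound} cannot be repeated, and the comparison principle developed in Section~\ref{section-comparison} is exactly what bridges the gap. The accompanying technical points --- checking the modulus-of-continuity condition for $\sqrt{x}$, the force condition \eqref{compare-eigenvalue force condition} for $(x+y)/N$ on the non-negative orthant, and the non-negativity and non-explosion of the solution to \eqref{Wishart SDE with drift} --- are routine, and the remainder is a straightforward adaptation of the Wishart argument with the constant $P/N$ replaced by $b_N$.
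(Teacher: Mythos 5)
Your overall strategy is the same as the paper's: both use the comparison principle of Section~\ref{section-comparison} to sandwich the particles of \eqref{Wishart SDE with drift} between genuine Wishart eigenvalue systems, transfer the moment bound of Lemma~\ref{largest eigen uniform bound}, and then rerun the proof of Theorem~\ref{CLT for Wishart}, absorbing the drift perturbation $b_N-c$ by the speed hypothesis \eqref{condition on b_N}. The difference is in how the sandwich is set up. The paper compares \emph{from both sides} with two constant-drift systems with drifts $p_1/N=c-\|b_N-c\|_{L^\infty}$ and $p_2/N=c+\|b_N-c\|_{L^\infty}$, started from the \emph{same} initial conditions as $\lambda^N$; then $|\lambda_i^N|\le\max(|x_i^N|,|y_i^N|)$, so $\langle|x|^p,L_N\rangle\le\langle|x|^p,L_N^{(x)}\rangle+\langle|x|^p,L_N^{(y)}\rangle$, and the moment bound follows without any discussion of the sign of $\lambda_i^N$. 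You compare \emph{only from above}, and must then argue separately that $\lambda_i^N\ge 0$ in order to convert $\lambda_i^N\le\theta_i^N$ into $|\lambda_i^N|\le|\theta_i^N|$.

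That non-negativity step is the weak point. Your justification — that the drift of the smallest particle at the origin is $b_N(0)-(N-1)/N>0$ for $c\ge1$ and $N$ large — is heuristic, not a proof: a favourable drift at a boundary does not by itself guarantee the boundary is never hit (one needs a McKean-type argument or a comparison argument, which is what the Graczyk--Ma\l{}ecki machinery actually supplies). And the parenthetical appeal to \cite{Graczyk2014} does not quite close it either, because the results there concern existence, uniqueness, non-collision and non-explosion, while the non-negativity statement the paper invokes (\cite[Theorem 3]{Graczyk2019}) is stated for the Wishart eigenvalue system with drift $P/N$, not for a general $b_N$. The clean and essentially cost-free fix is exactly the paper's: add a lower comparison with a Wishart system of parameter $p_1>N-1$, whose solution is non-negative by \cite{Graczyk2019}, which simultaneously gives $\lambda_i^N\ge x_i^N\ge 0$ and (together with the upper bound) the two-sided moment estimate without any separate argument. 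Two smaller remarks: your insistence on an integer $P_N$ is not needed, since the density \eqref{density of Wishart eigenvalue} and Lemmas~\ref{largest eigenvalue bound}--\ref{largest eigen uniform bound} make sense for any real $P>N-1$ (the paper takes $p_1,p_2$ non-integer); and the ``monotonicity of \eqref{density of Wishart eigenvalue} in $P$'' that you invoke to adjust the initial condition is a stochastic-domination claim that would need justification — the paper avoids it by starting both comparison systems from $\lambda^N(0)$ itself, which already satisfies the hypothesis of Lemma~\ref{largest eigenvalue bound}.
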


\begin{proof}
Let $p_1 = N (c - \|b_N(x) - c\|_{L^{\infty}})$ and $p_2 = N (c + \|b_N(x) - c\|_{L^{\infty}})$ be two constants depending on $N$.  Then \eqref{condition on b_N} implies  $p_2 \ge p_1 > N-1$ when $N$ is large. Clearly,  $p_1 \le N \|b_N(x)\|_{L^\infty(\R)} \le p_2$. Consider the following two systems of SDEs:
\begin{align} \label{eq-3.22'}
	d x_i^N(t)
	= 2 \dfrac{\sqrt{x_i^N(t)}}{\sqrt{N}} dW_i(t) + \left( \dfrac{p_1}{N} + \dfrac{1}{N} \sum_{j:j \neq i} \dfrac{x_i^N(t) + x_j^N(t)}{x_i^N(t) - x_j^N(t)} \right) dt, ~ 1\le i \le N, ~ t\ge 0,
\end{align}
and
\begin{align}\label{eq-3.23'}
	d y_i^N(t)
	= 2 \dfrac{\sqrt{y_i^N(t)}}{\sqrt{N}} dW_i(t) + \left( \dfrac{p_2}{N} + \dfrac{1}{N} \sum_{j:j \neq i} \dfrac{y_i^N(t) + y_j^N(t)}{y_i^N(t) - y_j^N(t)} \right) dt, ~ 1\le i \le N, ~ t\ge 0,
\end{align}
with the initial conditions $x_i^N(0) = y_i^N(0) = \lambda_i^N(0)$. By the comparison principle in Corollary \ref{Thm-compare of eigenvalue}, we have
\begin{align*}
	\mathbb{P} (x_i^N(t) \le \lambda_i^N(t) \le y_i^N(t), ~ \forall t \ge 0, ~ \forall 1 \le i \le N) = 1.
\end{align*}
Thus, almost surely, 
\begin{align}
	\sup_{t \in [0,T]} \langle |x|^p, L_N(t) \rangle
	&= \sup_{t \in [0,T]} \dfrac{1}{N} \sum_{i=1}^N |\lambda_i^N(t)|^p \notag\\
	&\le \sup_{t \in [0,T]} \dfrac{1}{N} \sum_{i=1}^N (|x_i^N(t)|^p + |y_i^N(t)|^p)\notag \\
	&\le \sup_{t \in [0,T]} \langle |x|^p, L_N^{(x)}(t) \rangle + \sup_{t \in [0,T]} \langle |x|^p, L_N^{(y)}(t) \rangle,\label{eq3.24'}
\end{align}
where $\{L_N^{(x)}(t), t \in [0,T]\}_{N \in \mathbb{N}}$ and $\{L_N^{(y)}(t), t \in [0,T]\}_{N \in \mathbb{N}}$ are the empirical measures of the two particle systems $(x_i^N(t))_{1 \le i \le N}$ and $(y_i^N(t))_{1 \le i \le N}$, respectively. 

Noting that $p_1/N$ and $p_2/N$ converge to $c$  as $N \rightarrow \infty$ by \eqref{condition on b_N}, we have that Lemma \ref{largest eigen uniform bound} holds for the two systems \eqref{eq-3.22'} and \eqref{eq-3.23'}, and thus also holds for \eqref{Wishart SDE with drift} by \eqref{eq3.24'}. Furthermore, condition \eqref{condition on b_N} also yields that $b_N(x)\to c$ uniformly as $N\to \infty$, and hence  \eqref{eq-3.16} still holds. Then the rest of the proof  follows that of Theorem \ref{CLT for Wishart}.
\end{proof}

\subsection{Application to Dyson's Brownian motion} \label{sec-dyson}

In this subsection, we discuss the CLT for the Dyson's Brownian motion. It was shown in \cite{Anderson2010,Graczyk2014,Song2019}, the scaled symmetric Brownian motion $X^N_t = (\tilde{B}^\intercal(t) + \tilde{B}(t)) / \sqrt{2N}$, where $\tilde{B}(t)$ is a $N \times N$ Brownian matrix, is the solution of the matrix SDE \eqref{matrix SDE} with the coefficient functions
\begin{align*}
	g_N(x) h_N(y) = \dfrac{1}{\sqrt{2N}}, \quad
	b_N(x) = 0.
\end{align*}
The system of SDEs of the eigenvalue processes, that is, the Dyson's Brownian motion, is  
\begin{align} \label{Dyson eigenvalue SDE}
	d \lambda_i^N(t)
	= \dfrac{\sqrt{2}}{\sqrt{N}} dW_i(t) + \dfrac{1}{N} \sum_{j:j \neq i} \dfrac{1}{\lambda_i^N(t) - \lambda_j^N(t)} dt, ~ 1\le i \le N, ~ t\ge 0.
\end{align}
In this case, we have
\begin{align}\label{eq-3.21}
	NG_N(x,y) = G(x,y) = 1, \quad
	b(x) = 0.
\end{align}

Here, we consider the distribution $\mathbb{Q}^N$ on $\Delta_N' = \{x=(x_1, x_2, \dots, x_N) \in \mathbb{R}^N: x_1 < \ldots < x_N \}$ with the density function
\begin{align} \label{density of Dyson eigenvalue}
	C_N \exp \left( -\dfrac{N}{4} \sum_{i=1}^N x_i^2 \right) \prod_{1\le j<i\le N} |x_i - x_j|,
\end{align}
where $C_N$ is a normalization constant.

Similar to the Wishart process, we can obtain the following central limit theorem.

\begin{theorem} \label{CLT for Dyson}
Let  $\xi^N=(\xi^N_1, \dots, 
\xi^N_N)$ be a random vector that is independent of $(W_1, \dots, W_N)$ and has  \eqref{density of Dyson eigenvalue} as its joint probability density function.
Assume that $(\lambda_1^N(0), \dots, \lambda_N^N(0))$ is  independent of $(W_1, \dots, W_N)$  and that there exist constants $a , b \ge 0$, such that
\begin{align}\label{eq-bound-initial}
	\sqrt{a} \xi_i^N - b \le \lambda_i^N(0) \le \sqrt{a} \xi_i^N + b
\end{align}
for $1 \le i \le N$ almost surely. Besides, assume that for any polynomial $f(x) \in \mathbb{R}[x]$, the initial value $\mathcal{L}_0^N(f)$ converges in probability to a random variable $\L_0(f)$. Furthermore, assume that for all  $n \in \mathbb N$,
\begin{align*}
	\sup_N\E[|N(\langle x^n, L_N(0)\rangle -\langle x^n, \mu_0\rangle)|^p]<\infty,
\end{align*}
for all $p\ge 1$.

Then for any $0 < T < \infty$, there exists a family of processes $\{\L_t(f), t\in [0,T]\}_{f \in \mathbb{R}[x]}$, such that for any $n \in \mathbb{N}$ and any polynomial $P_1, \ldots, P_n \in \mathbb{R}[x]$, the vector-valued process $(\mathcal{L}_t^N(P_1), \ldots, \mathcal{L}_t^N(P_n))_{t \in [0,T]}$ converges to $(\L_t(P_1), \ldots, \L_t(P_n))_{t \in [0,T]}$ in distribution.

The limit process $\{\L_t(f), t\in [0,T]\}_{f \in \mathbb{R}[x]}$ is characterized by the following properties.
\begin{enumerate}
	\item For $P_1, P_2 \in \mathbb{R}[x]$, $\alpha_1, \alpha_2 \in \mathbb{R}$, $t \in [0,T]$,
	\begin{align*}
		\L_t(\alpha_1 P_1 + \alpha_2 P_2) = \alpha_1 \L_t(P_1) + \alpha_2 \L_t(P_2).
	\end{align*}
	\item The basis $\{\L_t(x^n), t\in [0,T]\}_{n \in \mathbb{N}}$ of $\{\L_t(f), t\in [0,T]\}_{f \in \mathbb{R}[x]}$ satisfies
	\begin{align*}
		\L_t(1) = 0, \quad \L_t(x) = \L_0(x) + G_t(x),
	\end{align*}
	and for $n \ge 0$,
	\begin{align} \label{Dyson limit equation}
		\L_t(x^{n+2})
		= &\L_0(x^{n+2}) + \dfrac{(n+2)(n+1)}{2} \int_{0}^t \langle x^n, \mu_s \rangle ds\notag\\
		& + (n+2) \sum_{k=0}^n \int_0^t \L_s (x^{n-k}) \mu_s(x^k) ds + G_t(x^{n+2}),
	\end{align}
	where $\{G_t(x^n), t \in [0,T]\}_{n \in \mathbb{N}}$ is a centered Gaussian family with the covariance
	\begin{align*}
		\mathbb{E} \left[ G_t(x^n) G_s(x^m) \right]
		= 2mn \int_0^{t \wedge s} \langle x^{n+m-2}, \mu_u \rangle du, \quad n,m \ge 1.
	\end{align*}
\end{enumerate}
\end{theorem}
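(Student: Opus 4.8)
The plan is to follow the proof of Theorem~\ref{CLT for Wishart} step by step. Since Dyson's Brownian motion has no explicit joint density but does possess a Gaussian-type equilibrium, the two ingredients that must be supplied are: (a) an a priori moment bound
\begin{align*}
	\E\Bigl[\sup_{t\in[0,T]}\langle |x|^p,L_N(t)\rangle\Bigr]\le C(a,b,T)^p,\qquad p\ge1,\ N\ge cp,
\end{align*}
so that Corollary~\ref{Coro-Lp} applies to polynomial test functions (note that $G(x,x)=1$ trivially has polynomial growth); and (b) an induction on the polynomial degree turning the approximate identities for $Q_t^N(x^{n+2})$ into the recursion \eqref{Dyson limit equation}.

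For (a) I would establish the Dyson analogues of Lemmas~\ref{largest eigenvalue bound} and \ref{largest eigen uniform bound}. Introduce the time-inhomogeneous system, for $1\le i\le N$,
\begin{align*}
	du_i^N(t)=\frac{\sqrt2}{\sqrt{N(t+a)}}\,dW_i(t)+\frac{1}{t+a}\Bigl(-\tfrac12 u_i^N(t)+\frac1N\sum_{j:j\ne i}\frac{1}{u_i^N(t)-u_j^N(t)}\Bigr)dt,
\end{align*}
with $u^N(0)=\xi^N$ and $u_1^N\le\cdots\le u_N^N$. Existence and uniqueness of a non-colliding strong solution follow from \cite{Graczyk2013} (pathwise uniqueness and the McKean boundedness argument are insensitive to time-dependent coefficients as long as the relevant conditions hold uniformly in $t$), and Lemma~\ref{Lemma-stationary distribution} shows that the density \eqref{density of Dyson eigenvalue} is invariant for the homogeneous generator, so $u^N(t)$ is stationary with marginal law $\mathbb Q^N$. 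By It\^o's formula $v_i^N(t):=\sqrt{t+a}\,u_i^N(t)$ solves \eqref{Dyson eigenvalue SDE} with initial value $\sqrt a\,\xi^N$, and, by the spatial translation invariance of \eqref{Dyson eigenvalue SDE}, so do $v_i^N(t)\pm b$. Since $g_Nh_N$ is constant and $NG_N\equiv1$ satisfies the force condition \eqref{compare-eigenvalue force condition}, Corollary~\ref{Thm-compare of eigenvalue}, applied with the same driving Brownian motions and the sandwich \eqref{eq-bound-initial} of the initial data, gives
\begin{align*}
	\sqrt{t+a}\,u_i^N(t)-b\le\lambda_i^N(t)\le\sqrt{t+a}\,u_i^N(t)+b,\qquad 1\le i\le N,\ t\ge0.
\end{align*}

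Since \eqref{density of Dyson eigenvalue} has a Gaussian tail uniform in $N$, i.e.\ $\mathbb P(u_N^N(t)\ge x)=\mathbb Q^N(x_N\ge x)\le Ce^{-\alpha N x^2}$ for $x$ large, and, by the symmetry of \eqref{density of Dyson eigenvalue}, the same bound holds for $-u_1^N(t)$, the displayed sandwich yields $\E[|\lambda_N^N(t)|^k]+\E[|\lambda_1^N(t)|^k]\le C(a,b,T)^k$ for $t\in[0,T]$ and $k$ up to order $N$; feeding this into the pair formula \eqref{pair formula for empirical measure} for $f(x)=x^{n+2}$ (with $b_N\equiv0$, $g_N^2h_N^2\equiv\frac1{2N}$, $G_N\equiv\frac1N$) together with the Burkholder--Davis--Gundy inequality gives (a), exactly as in Lemma~\ref{largest eigen uniform bound}. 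Then Corollary~\ref{Coro-Lp} gives that $Q_t^N(x^n)-NM_{x^n}^N(t)\to0$ in every $L^q$ uniformly on $[0,T]$, and that $Q_t^N(x^n)$ converges in distribution to the centered Gaussian $G_t(x^n)$ with covariance $2mn\int_0^{t\wedge s}\langle x^{m+n-2},\mu_u\rangle du$, which is \eqref{cov} with $G(x,x)=1$. Specializing \eqref{eq-Q} together with \eqref{eq-3.21} to $f=x^{n+2}$ gives
\begin{align*}
	Q_t^N(x^{n+2})&=\L_t^N(x^{n+2})-\L_0^N(x^{n+2})-\tfrac{(n+2)(n+1)}{2}\int_0^t\langle x^n,\mu_s\rangle ds\\
	&\quad-(n+2)\sum_{k=0}^n\int_0^t\L_s^N(x^{n-k})\langle x^k,\mu_s\rangle ds-\frac{n+2}{2N}\sum_{k=0}^n\int_0^t\L_s^N(x^{k})\L_s^N(x^{n-k})ds.
\end{align*}
Using \eqref{initial-bound}, the BDG bound on $NM_{x^{n+2}}^N$ and this identity, an induction on $n$ yields $\sup_N\E[\sup_{t\in[0,T]}|\L_t^N(x^n)|^q]<\infty$ for every $q\ge1$; a Cauchy--Schwarz estimate then shows the $O(1/N)$ quadratic term is negligible in every $L^q$, so that $\widetilde Q_t^N(x^{n+2})$, defined by dropping that term, differs from $Q_t^N(x^{n+2})$ by something tending to $0$. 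A second induction on $n$, using the convergence in probability of $\L_0^N(P)$ and the joint convergence in distribution of $(\widetilde Q_t^N(x^{n+2}),\dots,\widetilde Q_t^N(x))$ to $(G_t(x^{n+2}),\dots,G_t(x))$, upgrades this to the joint convergence of $(\L_t^N(P_1),\dots,\L_t^N(P_n))$ for arbitrary polynomials; reading off the limiting identity gives \eqref{Dyson limit equation}, and linearity of $\L_t$ is Proposition~\ref{Prop on linearity of the Gaussian process}.

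I expect the main obstacle to be step (a): identifying the correct time-inhomogeneous auxiliary system, verifying its well-posedness and its stationarity under \eqref{density of Dyson eigenvalue}, and exploiting the spatial translation invariance of Dyson's Brownian motion to absorb the additive constant $b$ coming from the general initial condition \eqref{eq-bound-initial}. The Gaussian (rather than exponential) tail of \eqref{density of Dyson eigenvalue} and the degree-$n$ bookkeeping in the two inductions are the remaining technical points, but they are routine once the comparison is in place.
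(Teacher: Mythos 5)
Your proposal matches the paper's proof essentially step for step: the same time-inhomogeneous auxiliary system with stationary law \eqref{density of Dyson eigenvalue}, the same substitution $v_i^N(t)=\sqrt{t+a}\,u_i^N(t)\pm b$ and two-sided application of Corollary~\ref{Thm-compare of eigenvalue} to absorb the constant $b$ via translation invariance, and then the same moment bound, Corollary~\ref{Coro-Lp}, and double induction on the degree to pass from $Q_t^N(x^{n+2})$ to \eqref{Dyson limit equation}. The only cosmetic difference is the tail estimate you invoke (a Gaussian $\exp(-\alpha Nx^2)$ bound versus the paper's weaker exponential $\exp(-\alpha Nx)$ bound), but either suffices and the remainder of the argument is identical.
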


\begin{proof}
The proof is similar to the proofs of the Wishart case (Lemma \ref{largest eigenvalue bound},  Lemma \ref{largest eigen uniform bound} and Theorem \ref{CLT for Wishart}), which is sketched below.  

Consider the following SDE, for $1\le i \le N$,
\begin{align*}
	d u_i^N(t)
	= \dfrac{\sqrt{2}}{\sqrt{N(t+a)}} dW_i(t) + \dfrac{1}{t+a} \left( - \dfrac{1}{2} u_i^N(t) + \dfrac{1}{N} \sum_{j:j \neq i} \dfrac{1}{u_i^N(t) - u_j^N(t)} \right) dt, \quad t \ge 0.
\end{align*}
Then $\frac{d}{dt} \mathbb{E} [f(u^N(t))]$ vanishes for any $f \in C_b^2(\mathbb{R})$ if $u^N(t)$ has the distribution $\mathbb{Q}^N$ given in \eqref{density of Dyson eigenvalue}, and hence the process $u^N(t)$ with initial value $u^N(0) = \xi^N$ is stationary (see \cite[Lemma 4.3.17 ]{Anderson2010}). Let $v_i^N(t) = \sqrt{t+a} u_i^N(t) + b$ for $1 \le i \le N$. Then $v^N(t)$ and $\lambda^N(t)$ solve the same SDEs \eqref{Dyson eigenvalue SDE}, and by the comparison principle in Corollary \ref{Thm-compare of eigenvalue}, we have
\begin{align*}
	\lambda_i^N(t) \le v_i^N(t)
	= \sqrt{t+a} u_i^N(t) + b.
\end{align*}
A similar argument leads to
\begin{align*}
	- \lambda_i^N(t) \le - \sqrt{t+a} u_i^N(t) + b.
\end{align*}
Therefore, 
\begin{align*}
	|\lambda_i^N(t)| \le \sqrt{t+a} |u_i^N(t)| + b.
\end{align*}

Using the tail probability estimation based on the density function \eqref{density of Dyson eigenvalue} of $u_i^N(t)$, 
\begin{align*}
	\mathbb{P} \left( |u_i^N(t)| \ge x \right)
	\le \mathbb{P} (u_N^N(t) \ge x) + \mathbb{P} (u_1^N(t) \le -x)
	\le 2 \mathbb{P} (u_N^N(t) \ge x)
	\le \exp(-\alpha N x),
\end{align*}
where  $\alpha$ is positive constant  independent of $N$,
we obtain 
\begin{align*}
	\mathbb{E} \left[ |\lambda_i^N(t)|^k \right]
	&\le \mathbb{E} \left[ \left( \sqrt{t+a} |u_i^N(t)| + b \right)^k \right] \\
	&\le 2^k \sqrt{t+a}^k \mathbb{E} \left[ |u_i^N(t)|^k \right] + 2^k b^k \\
	&= 2^k \sqrt{t+a}^k k \int_0^{\infty} x^{k-1} \mathbb{P} \left( |u_i^N(t)| \ge x \right) dx + 2^k b^k \\
	&\le 2^k \sqrt{t+a}^k k \int_0^{\infty} x^{k-1} \exp(-\alpha N x) dx + 2^k b^k \\
	&= 2^k \sqrt{t+a}^k \dfrac{\Gamma(k+1)}{(\alpha N)^k} + 2^k b^k \\
	&\le 2^k \sqrt{t+a}^k + 2^k b^k \\
	&\le 2 \left( 4b^2 + 4(t+a) \right)^{k/2}
\end{align*}
for $k \in [0, \alpha N]$. Then a similar argument in the proof of Lemma \ref{largest eigen uniform bound} leads to
\begin{align}\label{eq3.29}
	\mathbb{E} \left[ \sup_{t \in [0,T]} \langle |x|^p, L_N(t) \rangle \right] \le C(a,b,T)^p
\end{align}
for some positive constant $C(a,b,T)$ depending only on $(a,b,T)$ and all $p \ge 0$, $N \ge \alpha p$ for some positive constant $\alpha$.

Then applying Corollary \ref{Coro-Lp} and following the approach in the proof of Theorem \ref{CLT for Wishart}, we may get the desired result. 
\end{proof}

\begin{remark}
The above result was obtained in \cite[Theorem 4.3.20]{Anderson2010}, under a slightly stronger condition  on the initial value. We would like to point out that there should be a constant factor $2/\beta$ in the covariance function which equals to $2$ in the real case and equals to $1$ in the complex case in \cite{Anderson2010}.
\end{remark}

Similar to the Wishart case, the self-similarity of the Brownian motion implies $\L_t(f(x)) \overset d= \L_1(f(\sqrt{t}x))$ and $\langle f(x), \mu_t \rangle = \langle f(\sqrt{t}x), \mu_1 \rangle$ when the initial value $X_0^N = 0$. Thus, \eqref{Dyson limit equation} can be simplified as, for $n\ge 0$,
\begin{align}\label{eq-3.24}
	\L_1(x^{n+2})
	= (n+1) \langle x^n, \mu_1 \rangle + 2 \sum_{k=0}^n \L_1 (x^{n-k}) \mu_1(x^k) + \dfrac{1}{t^{\frac{n+2}2}} G_t(x^{n+2}), \quad t > 0,
\end{align}
with covariance functions
\begin{align*}
	\mathbb{E} \left[ G_t(x^n) G_s(x^m) \right]
	=  \dfrac{4mn}{m+n} (t\wedge s)^{\frac{m+n}2} \langle x^{m+n-2}, \mu_1 \rangle, \quad n,m \ge 1.
\end{align*}
The case $t=1$ corresponds to the classical GOE matrix, and $\mu_1$ is the semicircle law.  Some beginning terms are $\L_1(1)=0, \L_1(x) = G_1(x)$ and $\L_1(x^2) = 1 + G_1(x^2)$. By  \eqref{eq-3.24},  for $m\ge 2$,  $\L_1(x^m)$ has the distribution of a linear combination of central Gaussian variables $\{G_1(x^j),1 \le j \le m\}$.

The following Corollary extends the result of Theorem \ref{CLT for Dyson}.

\begin{corollary} \label{general Dyson}
Consider the following SDEs
\begin{align} \label{Dyson SDE with drift}
	d \lambda_i^N(t)
	= \dfrac{\sqrt{2}}{\sqrt{N}} dW_i(t) + \left( b_N(\lambda_i^N(t)) + \dfrac{1}{N} \sum_{j:j \neq i} \dfrac{1}{\lambda_i^N(t) - \lambda_j^N(t)} \right) dt, ~ 1\le i \le N, ~ t\ge 0,
\end{align}
where  $b_N(x)$ satisfies, for some constant $c\in \R$,
\begin{align} \label{con-gen-Dyson-b_N}
	\lim_{N \rightarrow \infty} N \| b_N(x) - c \|_{L^{\infty}(\mathbb{R})} = 0.
\end{align}
Furthermore, assume the same initial conditions as in Theorem \ref{CLT for Dyson}. Then the conclusion of Theorem \ref{CLT for Dyson} still holds with \eqref{Dyson limit equation} replaced by
\begin{align}
	\L_t(x^{n+2})
	= &\L_0(x^{n+2}) + c(n+2) \int_0^t \L_s(x^{n+1}) ds + \dfrac{(n+2)(n+1)}{2} \int_{0}^t \langle x^n, \mu_s \rangle ds\notag\\
	& + (n+2) \sum_{k=0}^n \int_0^t \L_s (x^{n-k}) \mu_s(x^k) ds + G_t(x^{n+2}),
\end{align}
for $n \ge -1$.
\end{corollary}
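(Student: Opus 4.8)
\textbf{Proof proposal for Corollary \ref{general Dyson}.}
The plan is to repeat, with the Dyson-specific data ($G(x,y)\equiv1$, $b(x)\equiv c$, Hermite-type stationary measure $\mathbb{Q}^N$) in place of the Wishart data, the argument by which Corollary \ref{general wishart} was deduced from Theorem \ref{CLT for Wishart}. Three ingredients are needed: (i) the uniform moment bound $\mathbb{E}\big[\sup_{t\in[0,T]}\langle|x|^p,L_N(t)\rangle\big]\le C(a,b,c,T)^p$ for $N\ge\alpha p$; (ii) an application of Corollary \ref{Coro-Lp}; (iii) an induction on the polynomial degree. Note that here $NG_N(x,y)\equiv G(x,y)\equiv1$ and $b(x)\equiv c$ hold exactly, so the speed condition \eqref{conv-speed} reduces to \eqref{con-gen-Dyson-b_N}, and $G(x,x)\equiv1$ trivially has polynomial growth; moreover, as in the Wishart case, one checks that the relevant conditions of \cite{Song2019} continue to hold, so that $\{L_N(t),t\in[0,T]\}$ still converges weakly to the limiting flow $\{\mu_t,t\in[0,T]\}$ solving the limit equation with drift $b(x)=c$.

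For (i) I would use the comparison principle of Corollary \ref{Thm-compare of eigenvalue}. Set $c_1=c-\|b_N-c\|_{L^\infty(\R)}$ and $c_2=c+\|b_N-c\|_{L^\infty(\R)}$, so that $c_1\le b_N(x)\le c_2$ for all $x\in\R$ and $c_1,c_2\to c$ by \eqref{con-gen-Dyson-b_N}. Let $(x_i^N)$ and $(y_i^N)$ solve \eqref{Dyson SDE with drift} with $b_N$ replaced by the constants $c_1$ and $c_2$ respectively, driven by the same Brownian motions $(W_i)$ and started from $\lambda^N(0)$. The diffusion coefficient $g_Nh_N\equiv1/\sqrt{2N}$ is constant, hence satisfies the modulus-of-continuity hypothesis of Corollary \ref{Thm-compare of eigenvalue} for any admissible $\rho$, and $G_N(x,y)\equiv1/N$ trivially satisfies \eqref{compare-eigenvalue force condition}; thus Corollary \ref{Thm-compare of eigenvalue} gives $x_i^N(t)\le\lambda_i^N(t)\le y_i^N(t)$ for all $t\ge0$ and $1\le i\le N$, a.s. Translating by the drift, $\tilde x_i^N(t):=x_i^N(t)-c_1t$ solves the standard Dyson system \eqref{Dyson eigenvalue SDE} started from $\lambda^N(0)$ (the pairwise differences $x_i^N-x_j^N$, and hence the interaction term, are unchanged), and likewise for $y$; the estimate \eqref{eq3.29}, established in the proof of Theorem \ref{CLT for Dyson} for any solution of \eqref{Dyson eigenvalue SDE} with initial condition \eqref{eq-bound-initial}, therefore applies to $(\tilde x_i^N)$ and $(\tilde y_i^N)$ with a constant depending only on $(a,b,T)$. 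Since $|x_i^N(t)|\le|\tilde x_i^N(t)|+|c_1|T$ and similarly for $y$, the bound carries over to $(x_i^N)$ and $(y_i^N)$, and then to $(\lambda_i^N)$ exactly as in \eqref{eq3.24'} via $|\lambda_i^N(t)|^p\le|x_i^N(t)|^p+|y_i^N(t)|^p$.

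For (ii)--(iii), with this moment bound Corollary \ref{Coro-Lp} applies to every $f\in\R[x]$: $Q_t^N(x^n)-NM_{x^n}^N(t)\to0$ in $L^p$ uniformly on $[0,T]$ for all $p\ge1$, and $Q_t^N(x^n)$ converges in distribution to the centered Gaussian process $G_t(x^n)$ with covariance $2mn\int_0^{t\wedge s}\langle x^{n+m-2},\mu_u\rangle\,du$. Expanding \eqref{eq-Q} for $f(x)=x^{n+2}$ with $b(x)=c$ and $G(x,y)=1$ --- so that $\tfrac{f'(x)-f'(y)}{x-y}G(x,y)=(n+2)\sum_{k=0}^n x^k y^{n-k}$ and $\tfrac12 f''(x)G(x,x)=\tfrac12(n+2)(n+1)x^n$ --- yields the same structure as \eqref{eq-3.16}: $Q_t^N(x^{n+2})$ equals $\mathcal L_t^N(x^{n+2})-\mathcal L_0^N(x^{n+2})$ minus $c(n+2)\int_0^t\mathcal L_s^N(x^{n+1})\,ds$, minus $\tfrac{(n+2)(n+1)}{2}\int_0^t\langle x^n,\mu_s\rangle\,ds$, minus $(n+2)\sum_{k=0}^n\int_0^t\mathcal L_s^N(x^{n-k})\mu_s(x^k)\,ds$, minus the quadratic remainder $\tfrac{n+2}{2N}\sum_{k=0}^n\int_0^t\mathcal L_s^N(x^k)\mathcal L_s^N(x^{n-k})\,ds$. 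As in the proof of Theorem \ref{CLT for Wishart}, an induction on $n$ using this identity, the hypothesis $\sup_N\mathbb E[|\mathcal L_0^N(x^n)|^p]<\infty$, the Burkholder--Davis--Gundy bound on $NM_{x^{n+2}}^N$ obtained from \eqref{quadratic variation of martingale}, and the moment bound from (i) shows $\sup_N\mathbb E\big[\sup_{t\in[0,T]}|\mathcal L_t^N(x^n)|^q\big]<\infty$ for all $q\ge1$; hence the quadratic remainder is $O(N^{-1})$ in $L^q$ and a.s., and dropping it leaves a process $\tilde Q_t^N(x^{n+2})$ that still converges in distribution (jointly with lower-degree analogues) to $G_t(x^{n+2})$. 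Finally one bootstraps on the degree: $\mathcal L_t^N(1)=0$; $\mathcal L_t^N(x)=\mathcal L_0^N(x)+\tilde Q_t^N(x)$ converges since $\mathcal L_0^N(x)\to\L_0(x)$ in probability; and assuming $(\mathcal L_t^N(x^k),\ldots,\mathcal L_t^N(x))$ converges, the joint convergence of the $\tilde Q_t^N(x^{j})$ forces $(\mathcal L_t^N(x^{k+1}),\ldots,\mathcal L_t^N(x))$ to converge and identifies the limit, giving \eqref{Dyson limit equation} augmented by the drift contribution $c(n+2)\int_0^t\L_s(x^{n+1})\,ds$, valid for $n\ge-1$ since $\L_s(1)=0$.

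I do not expect a genuine obstacle; the one point requiring care is the moment-bound transfer in (i), namely verifying that the two constant-drift comparison systems are covered by \eqref{eq3.29}. The translation $\tilde x_i^N=x_i^N-c_k t$ resolves this, but one must check that the pairwise differences (hence the interaction term) are invariant under the translation, that the initial condition of $\tilde x^N$ is still $\lambda^N(0)$ satisfying \eqref{eq-bound-initial}, and that undoing the translation costs only an additive $|c_k|T$, which is bounded uniformly in large $N$. Apart from this, every step is a transcription of the proofs of Theorem \ref{CLT for Dyson} and Corollary \ref{general wishart}, the constraint $c\ge1$ of the Wishart case being replaced here by an arbitrary $c\in\R$, which is harmless since only the uniform convergence $b_N\to c$ at rate $o(1/N)$ is used.
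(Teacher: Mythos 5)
Your proposal is correct and follows essentially the same route as the paper: sandwich the solution of \eqref{Dyson SDE with drift} between two constant-drift Dyson systems via the comparison principle, undo the deterministic drift translation to reuse the moment bound \eqref{eq3.29}, and then run the Corollary \ref{Coro-Lp}/Theorem \ref{CLT for Wishart} machinery with the Dyson data $G\equiv1$, $b\equiv c$. The only cosmetic differences are your $N$-dependent choice $c_k=c\pm\|b_N-c\|_{L^\infty}$ (the paper fixes $c\pm1$ for $N$ large) and your citation of Corollary \ref{Thm-compare of eigenvalue} where the paper uses Theorem \ref{Thm-compare of particle}; both apply here since $G_N\equiv1/N$ satisfies \eqref{compare-eigenvalue force condition} and the diffusion coefficient is constant.
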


\begin{proof}
Set $c_1 = c-1$ and $c_2 = c+1$. Then by \eqref{con-gen-Dyson-b_N}, there exist $N_0 \in \mathbb{N}$ such that for $N \ge N_0$, $c_1 \le \|b_N(x)\|_{L^\infty(\R)} \le c_2$. Without loss of generality, we assume $c_1 \le \|b_N(x)\|_{L^\infty(\R)} \le c_2$ for all $N\ge1$.

Consider the following two systems of SDEs:
\begin{align} \label{eq-3.32}
	d x_i^N(t)
	= \dfrac{\sqrt{2}}{\sqrt{N}} dW_i(t) + \left( c_1 + \dfrac{1}{N} \sum_{j:j \neq i} \dfrac{1}{x_i^N(t) - x_j^N(t)} \right) dt, ~ 1\le i \le N, ~ t\ge 0.
\end{align}
and
\begin{align} \label{eq-3.33}
	d y_i^N(t)
	= \dfrac{\sqrt{2}}{\sqrt{N}} dW_i(t) + \left( c_2 + \dfrac{1}{N} \sum_{j:j \neq i} \dfrac{1}{y_i^N(t) - y_j^N(t)} \right) dt, ~ 1\le i \le N, ~ t\ge 0,
\end{align}
with the initial conditions $x_i^N(0) = y_i^N(0) = \lambda_i^N(0)$ for $1 \le i \le N$. By the comparison principle Theorem \ref{Thm-compare of particle}, we have
\begin{align*}
	\mathbb{P} \left( x_i^N(t) \le \lambda_i^N(t) \le y_i^N(t), \ \forall 1 \le i \le N, \ \forall t > 0 \right) = 1.
\end{align*}
Thus, for $p \ge 1$, we have
\begin{align}
	&\sup_{t \in [0,T]} \langle |x|^p, L_N(t) \rangle
	= \sup_{t \in [0,T]} \dfrac{1}{N} \sum_{i=1}^N |\lambda_i^N(t)|^p \le \sup_{t \in [0,T]} \dfrac{1}{N} \sum_{i=1}^N (|x_i^N(t)|^p + |y_i^N(t)|^p)\notag \\
	&\le \sup_{t \in [0,T]} \dfrac{1}{N} \sum_{i=1}^N 2^p (|x_i^N(t) - c_1t|^p + (c_1t)^p + |y_i^N(t) - c_2t|^p + (c_2t)^p) \notag\\
	&\le 2^p \left(\sup_{t \in [0,T]} \langle |x|^p, L_N^{(x)}(t) \rangle + \sup_{t \in [0,T]} \langle |x|^p, L_N^{(y)}(t) \rangle + (c_1 T)^p +  (c_2 T)^p\right),\label{eq3.36}
\end{align}
almost surely, where $\{L_N^{(x)}(t), t \in [0,T]\}_{N \in \mathbb{N}}$ and $\{L_N^{(y)}(t), t \in [0,T]\}_{N \in \mathbb{N}}$ are the empirical measures of the two particle systems $(x_i^N(t) - c_1t)_{1 \le i \le N}$ and $(y_i^N(t) - c_2t)_{1 \le i \le N}$, respectively.

It is easy to verify that both $(x_i^N(t) - c_1t)_{1 \le i \le N}$ and $(y_i^N(t) - c_2t)_{1 \le i \le N}$ solve the Dyson's SDEs \eqref{Dyson eigenvalue SDE}. By \eqref{eq3.29} in the proof Theorem \ref{CLT for Dyson}, we have
\[
	\mathbb{E} \left[ \sup_{t \in [0,T]} \langle |x|^p, L_N^{(x)}(t) \rangle \right] \le C(a,b,T)^p
~ \text{ and }~
	\mathbb{E} \left[ \sup_{t \in [0,T]} \langle |x|^p, L_N^{(y)}(t) \rangle \right] \le C(a,b,T)^p,
\]
and consequently, by \eqref{eq3.36}
\begin{align*}
	\mathbb{E} \left[ \sup_{t \in [0,T]} \langle |x|^p, L_N(t) \rangle \right] \le C(a,b,T)^p,
\end{align*}
for some positive constant $C(a,b,T)$ depending only on $(a,b,T)$ and all $p \ge 1$, $N \ge \alpha p$ for some positive constant $\alpha$.

Note that \eqref{con-gen-Dyson-b_N} also implies that $b_N(x)$ converges to the constant $c$ uniformly as $N \rightarrow \infty$.  Then applying Corollary \ref{Coro-Lp} and following the approach in the proof of Theorem \ref{CLT for Wishart}, we get the desired result. 

\end{proof}

\subsection{Application to eigenvalues of symmetric OU matrix} \label{sec-ou}

In this subsection, we discuss the CLT for the eigenvalues of a symmetric Ornstein-Uhlenbeck matrix process. It was shown in \cite{Chan1992}, the symmetric $N \times N$ matrix $X^N(t)$, whose entries $\{X_{ij}^N(t), i \le j\}$ are independent Ornstein-Uhlenbeck processes with invariant distribution $N(0, (1 + \delta_{ij})/(2N))$, where $\delta_{ij}$ is the Kronecker delta function,  is the solution of the matrix SDE \eqref{matrix SDE} with the coefficient functions
\begin{align*}
	g_N(x) h_N(y) = \dfrac{1}{2\sqrt{N}}, \quad
	b_N(x) = -\dfrac{1}{2}x.
\end{align*}
The SDEs of the eigenvalue processes are  
\begin{align} \label{OU eigenvalue SDE}
	d \lambda_i^N(t)
	= \dfrac{1}{\sqrt{N}} dW_i(t) + \left( -\dfrac{1}{2} \lambda_i^N(t) + \dfrac{1}{2N} \sum_{j:j \neq i} \dfrac{1}{\lambda_i^N(t) - \lambda_j^N(t)} \right) dt, ~ 1\le i \le N, ~ t\ge 0.
\end{align}
In this case, we have
\begin{align*}
	NG_N(x,y) = G(x,y) = \dfrac{1}{2}, \quad
	b(x) = -\dfrac{1}{2}x.
\end{align*}

Similar to the eigenvalues of Wishart process and Dyson's Brownian motion, we have the following CLT.
\begin{theorem} \label{CLT for OU}

Let  $\xi^N=(\xi^N_1, \dots, \xi^N_N)$ be a random vector that is independent of $(W_1, \dots, W_N)$  and has \eqref{density of Dyson eigenvalue} as its joint probability density function. Assume that $(\lambda_1^N(0), \dots, \lambda_N^N(0))$ is independent of $(W_1, \dots, W_N)$ and that  there exist constants $a , b \ge 0$, such that
\begin{align*}
	\sqrt{a} \xi_i^N - b \le \lambda_i^N(0) \le \sqrt{a} \xi_i^N + b
\end{align*}
for $1 \le i \le N$ almost surely.

Besides, assume that for any polynomial $f(x) \in \mathbb{R}[x]$, the initial value $\mathcal{L}_0^N(f)$ converges in probability to a random variable $\L_0(f)$. Furthermore, assume that for all $n\in \mathbb N$,
\begin{align*}
	\sup_N\E[|N(\langle x^n, L_N(0)\rangle -\langle x^n, \mu_0\rangle)|^p]<\infty,
\end{align*}
for all $p\ge 1$.
	
Then for any $0 < T < \infty$, there exists a family of processes $\{\L_t(f), t\in [0,T]\}_{f \in \mathbb{R}[x]}$, such that for any $n \in \mathbb{N}$ and any polynomial $P_1, \ldots, P_n \in \mathbb{R}[x]$, the vector-valued process $(\mathcal{L}_t^N(P_1), \ldots, \mathcal{L}_t^N(P_n))_{t \in [0,T]}$ converges to $(\L_t(P_1), \ldots, \L_t(P_n))_{t \in [0,T]}$ in distribution.
	
The limit process $\{\L_t(f), t\in [0,T]\}_{f \in \mathbb{R}[x]}$ is characterized by the following properties.
\begin{enumerate}
	\item For $P_1, P_2 \in \mathbb{R}[x]$, $\alpha_1, \alpha_2 \in \mathbb{R}$, $t \in [0,T]$,
	\begin{align*}
		\L_t(\alpha_1 P_1 + \alpha_2 P_2) = \alpha_1 \L_t(P_1) + \alpha_2 \L_t(P_2).
	\end{align*}
	\item The basis $\{\L_t(x^n), t\in [0,T]\}_{n \in \mathbb{N}}$ of $\{\L_t(f), t\in [0,T]\}_{f \in \mathbb{R}[x]}$ satisfies
	\begin{align*}
		\L_t(1) = 0, \quad \L_t(x) = - \L_0(x) + G_t(x) - \dfrac{1}{2} e^{-t/2} \int_0^t e^{s/2} \left( G_s(x) - \L_0(x) \right) ds,
	\end{align*}
	and for $n \ge 0$,
	\begin{align} \label{OU limit equation}
		\L_t(x^{n+2})
		&= e^{-\frac{n+2}{2}t} \mathcal{L}_0(x^{n+2})+R_t(n) + G_t(x^{n+2})  \notag \\
		&\quad - \dfrac{n+2}{2} e^{-\frac{n+2}{2} t} \int_0^t e^{\frac{n+2}{2} s} (R_s(n) + G_s(x^{n+2})) ds.
	\end{align}
	where
	\begin{align}\label{eq-rtn}
		R_t(n)
		&= \dfrac{(n+2)(n+1)}{4} \int_{0}^t \langle x^n, \mu_s \rangle ds + \dfrac{n+2}{2} \sum_{k=0}^n \int_0^t \mathcal{L}_s (x^{n-k}) \mu_s(x^k) ds
	\end{align}
	and $\{G_t(x^n), t \in [0,T]\}_{n \in \mathbb{N}}$ is a centered Gaussian family with the covariance
	\begin{align}\label{eq3.28}
		\mathbb{E} \left[ G_t(x^n) G_s(x^m) \right]
		= mn \int_0^{t \wedge s} \langle x^{n+m-2}, \mu_u \rangle du, \quad n,m \ge 1.
	\end{align}
	\end{enumerate}
\end{theorem}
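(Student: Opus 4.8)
The plan is to mirror the proofs of Theorems \ref{CLT for Wishart} and \ref{CLT for Dyson}: first establish a uniform moment bound $\E\big[\sup_{t\in[0,T]}\langle|x|^p,L_N(t)\rangle\big]\le C(a,b,T)^p$ for $N\ge cp$, so that Corollary \ref{Coro-Lp} applies (here $b(x)=-x/2$ and $G(x,x)=\tfrac12$, so $G(x,x)$ and its derivative trivially have polynomial growth), and then read off the limits of $\mathcal L_t^N(x^n)$ by a recursion. Compared with the Brownian cases, two things require more care: the Ornstein--Uhlenbeck semigroup has no exact self-similarity, so the deterministic time change used for Dyson's motion has to be replaced, and $b(x)=-x/2$ has the same degree as the test monomial $x^{n+2}$, so the resulting recursion for $\mathcal L_t(x^n)$ is not triangular.

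For the moment bound, I would set $\alpha(t)^2=\tfrac12+(a-\tfrac12)e^{-t}$ (positive for all $t\ge0$) and let $u^N$ be the ordered non-colliding strong solution of
\[
  du_i^N(t)=\frac{1}{\alpha(t)\sqrt N}\,dW_i(t)+\Big(-\frac{1}{4\alpha(t)^2}u_i^N(t)+\frac{1}{2N\alpha(t)^2}\sum_{j:j\neq i}\frac{1}{u_i^N(t)-u_j^N(t)}\Big)dt,\quad u^N(0)=\xi^N.
\]
As in the Dyson case, the pathwise uniqueness of \cite[Theorem 2]{Graczyk2013} and the McKean argument of \cite[Theorem 5]{Graczyk2013} remain valid for $t$-dependent coefficients, and a reversibility computation (cf. \cite[Lemma 4.3.17]{Anderson2010}) shows that the density \eqref{density of Dyson eigenvalue} is invariant for the generator of $u^N$; hence $u^N$ is a stationary process with marginal law \eqref{density of Dyson eigenvalue}. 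An It\^o computation, using $2(\alpha^2)'=1-2\alpha^2$, shows that both $v_i^N(t)=\alpha(t)u_i^N(t)+be^{-t/2}$ and $w_i^N(t)=\alpha(t)u_i^N(t)-be^{-t/2}$ solve \eqref{OU eigenvalue SDE} driven by the same $(W_i)$, with $w_i^N(0)=\sqrt a\,\xi_i^N-b\le\lambda_i^N(0)\le\sqrt a\,\xi_i^N+b=v_i^N(0)$. Since $g_Nh_N\equiv\tfrac1{2\sqrt N}$ is Lipschitz and $G_N\equiv\tfrac1{2N}$ trivially satisfies \eqref{compare-eigenvalue force condition}, the comparison principle (Corollary \ref{Thm-compare of eigenvalue}) gives $w_i^N(t)\le\lambda_i^N(t)\le v_i^N(t)$, whence $|\lambda_i^N(t)|\le\sqrt{\max(a,1/2)}\,|u_i^N(t)|+b$. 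The exponential tail $\mathbb P(|u_i^N(t)|\ge x)\le e^{-\alpha Nx}$ of \eqref{density of Dyson eigenvalue} (as in \eqref{eq-3.8}) then yields $\E[|\lambda_i^N(t)|^k]\le C(a,b)^k$ for $k\le\alpha N$, and the pair formula \eqref{pair formula for empirical measure} with $f(x)=x^{n+2}$ together with Burkholder--Davis--Gundy and H\"older, exactly as in the proof of Lemma \ref{largest eigen uniform bound}, upgrades this to the required $\sup_{t\le T}$ bound. Then Corollary \ref{Coro-Lp} gives that $Q_t^N(x^n)-NM_{x^n}^N(t)\to0$ in every $L^q$ uniformly on $[0,T]$, and that $(NM_{x^n}^N(t))_{n,t}$ converges in distribution to the centered Gaussian family $(G_t(x^n))$ with covariance \eqref{eq3.28}; tightness of $\{L_N(t)\}$ and its convergence to $\{\mu_t\}$ follow from \cite{Song2019} as in the Wishart discussion.

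For the recursion, substituting $b(x)=-x/2$, $G(x,y)=\tfrac12$ and $f(x)=x^{n+2}$ into \eqref{eq-Q} and expanding the divided differences gives
\[
  \mathcal L_t^N(x^{n+2})+\frac{n+2}{2}\int_0^t\mathcal L_s^N(x^{n+2})\,ds=\mathcal L_0^N(x^{n+2})+Q_t^N(x^{n+2})+R_t^N(n)+\frac{n+2}{4N}\sum_{k=0}^n\int_0^t\mathcal L_s^N(x^k)\mathcal L_s^N(x^{n-k})\,ds,
\]
where $R_t^N(n)$ is the pre-limit form of \eqref{eq-rtn}. The new feature is the coupling of the top-order term $\mathcal L_t^N(x^{n+2})$ to its own time average, which I would resolve by solving the linear Volterra equation $Y_t+c\int_0^t Y_s\,ds=f_t$ (whose solution is $Y_t=f_t-c\int_0^t e^{-c(t-s)}f_s\,ds$), with $c=(n+2)/2$. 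An induction on $n$, following Theorem \ref{CLT for Wishart}, then closes the argument: the $L^q$-control of $Q_t^N(x^{n+2})-NM_{x^{n+2}}^N(t)$, Burkholder--Davis--Gundy for $NM_{x^{n+2}}^N$, the hypothesis $\sup_N\E[|\mathcal L_0^N(x^n)|^p]<\infty$, and the solved Volterra form give $\sup_N\E[\sup_{t\le T}|\mathcal L_t^N(x^n)|^q]<\infty$; Cauchy--Schwarz with these bounds makes the $N^{-1}$ double-sum remainder vanish in $L^q$ and, via Markov and Borel--Cantelli, almost surely; and writing $\mathcal L_t^N(x^{n+2})$ through the solved Volterra equation exhibits it as a continuous functional of the lower-order $\mathcal L^N$'s, the martingales, and the initial data, so the joint convergence in distribution of $(\mathcal L_t^N(x^j))_{0\le j\le n+2}$ propagates. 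Passing to the limit and using $\mathcal L_0(x^{n+2})-c\int_0^t e^{-c(t-s)}\mathcal L_0(x^{n+2})\,ds=e^{-ct}\mathcal L_0(x^{n+2})$ produces \eqref{OU limit equation}, while $\mathcal L_t(1)=0$ and the formula for $\mathcal L_t(x)$ come from $f\equiv1$ and $f(x)=x$.

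The hard part will be the non-triangular recursion: because $f'(x)b(x)=-\tfrac{n+2}{2}x^{n+2}$ has the same degree as $f$, the functional $\mathcal L_t^N(x^{n+2})$ obeys a Volterra equation rather than a closed identity, and this solved form must be propagated consistently through both the moment induction and the identification of the Gaussian limit. The second, more technical, point will be building the correct $t$-dependent comparison process $u^N$ — the choice of $\alpha(t)$ together with the mean-reversion term $-\tfrac1{4\alpha(t)^2}u_i^N$, so that the stationary density \eqref{density of Dyson eigenvalue} is preserved in time — which substitutes for the Brownian self-similarity used in the Wishart and Dyson cases.
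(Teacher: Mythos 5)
Your proposal is correct and reaches the same recursive characterization, but the route to the uniform moment bound is genuinely different from the paper's. Where the paper passes through an explicit matrix-level reduction of the Ornstein--Uhlenbeck process to a time-changed symmetric Brownian motion via Knight's theorem (writing $\sqrt{2}\,e^{t/2}\lambda_i^N(t)=\tilde{\lambda}_i^N(e^t-1)$ and then reusing the moment bound already proved for Dyson's Brownian motion), you instead mimic the paper's \emph{Dyson} lemma directly: you build a time-inhomogeneous SDE for $u^N$ whose generator at each time $t$ is $\alpha(t)^{-2}L$ with $L$ the reversible generator for the density \eqref{density of Dyson eigenvalue}, so $u^N$ is stationary, and then exhibit $v_i^N(t)=\alpha(t)u_i^N(t)+be^{-t/2}$ and $w_i^N(t)=\alpha(t)u_i^N(t)-be^{-t/2}$ as sandwiching solutions of \eqref{OU eigenvalue SDE}, to which the comparison principle (Corollary \ref{Thm-compare of eigenvalue}) applies. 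Your $\alpha(t)^2=\tfrac12+(a-\tfrac12)e^{-t}$ is, not by accident, exactly what emerges from unwinding the paper's time change; so the two proofs give the same envelope $|\lambda_i^N(t)|\le\sqrt{\max(a,1/2)}\,|u_i^N(t)|+b$, but your derivation avoids the matrix-level Knight argument and stays entirely at the level of the eigenvalue SDE and its stationary law. (Minor caveat, shared with the paper's own Dyson lemma: the case $a=0$ makes $\alpha(0)=0$, so one should perturb $a$ slightly or start the comparison at a small positive time.) The recursion step — substituting $b(x)=-x/2$, $G\equiv\tfrac12$ into $Q_t^N(x^{n+2})$, noticing that the drift produces a $\tfrac{n+2}{2}\int_0^t\mathcal L_s^N(x^{n+2})\,ds$ feedback term, solving the linear Volterra equation $Y_t+c\int_0^t Y_s\,ds=f_t$, and propagating joint convergence by induction with the $L^q$ control from Corollary \ref{Coro-Lp} — coincides with the paper's argument, including the identity $\L_0(x^{n+2})-c\int_0^t e^{-c(t-s)}\L_0(x^{n+2})\,ds=e^{-ct}\L_0(x^{n+2})$ that yields the $e^{-(n+2)t/2}\L_0(x^{n+2})$ term in \eqref{OU limit equation}.
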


\begin{proof}

Consider the symmetric OU matrix $X^N_t$, of which the entries $\{X_{ij}^N(t)\}$  satisfy 
\begin{align} \label{OU entry SDE}
	d X_{ij}^N(t) = - \dfrac{1}{2} X_{ij}^N(t) dt + \dfrac{2\delta_{ij} + \sqrt{2}(1-\delta_{ij})}{2\sqrt{N}} dB_{ij}(t),~~ 1\le i\le j\le N, ~ t\ge 0,
\end{align}
where $\{B_{ij}(t), i \le j\}$ is a family of independent Brownian motions. Denoting by \[\sigma_{ij} = \dfrac{2\delta_{ij} + \sqrt{2}(1-\delta_{ij})}{2\sqrt{N}},\] the solution to \eqref{OU entry SDE} is given by 
\begin{align*}
	X_{ij}^N(t)
	= X_{ij}^N(0) e^{-t/2} + \sigma_{ij} e^{-t/2} \int_0^t e^{s/2} dB_{ij}(s).
\end{align*}
The stochastic integral is a martingale with quadratic variation
\begin{align*}
	\left\langle \int_0^{\cdot} e^{s/2} dB_{ij}(s) \right\rangle_t
	= e^t - 1.
\end{align*}
By Knight's Theorem, there exists a family of independent standard one-dimensional Brownian motions $\{\tilde{B}_{ij}(t), i \le j\}$, such that
\begin{align*}
	\int_0^t e^{s/2} dB_{ij}(s) = \tilde{B}_{ij}(e^t-1).
\end{align*}
Thus, we have
\begin{align} \label{eq-3.42}
	X_{ij}^N(t)
	= e^{-t/2}\left( X_{ij}^N(0)  + \sigma_{ij}  \tilde{B}_{ij}(e^t-1)\right).
\end{align}

Let $Y^N_t$ be a matrix-valued stochastic process whose entries $\{Y_{ij}^N(t), i \le j\}$ are given by
\begin{align} \label{eq-3.43}
	Y_{ij}^N(t) = Y_{ij}^N(0) + \sqrt{2} \sigma_{ij} \tilde{B}_{ij}(t),
\end{align}
with $Y_{ij}^N(0) = \sqrt{2} X_{ij}^N(0), 1 \le i \le j \le N$. Then $Y^N$ is the scaled symmetric Brownian motion introduced in section \ref{sec-dyson}. By \eqref{eq-3.42} and \eqref{eq-3.43}, \[\sqrt{2} e^{t/2} X_{ij}^N(t) = Y_{ij}^N(e^t-1), ~ 1\le i \le j\le N,\] 
and hence 
\begin{align*}
	\sqrt{2} e^{t/2} \lambda_i^N(t) = \tilde{\lambda}_i^N(e^t-1), ~ 1\le i \le N,
\end{align*}
where $\{\lambda_i^N(t)\}$ and $\{\tilde \lambda_i^N(t)\}$ are the eigenvalues of $X^N(t)$ and $Y^N(t)$, respectively. 

Thus, almost surely, we have
\begin{align*}
	\langle |x|^p, L_N(t) \rangle
	&= \dfrac{1}{N} \sum_{i=1}^N |\lambda_i^N(t)|^p \\
	&= 2^{-p/2} e^{-pt/2} \dfrac{1}{N} \sum_{i=1}^N |\tilde{\lambda}_i^N(e^t-1)|^p \\
	&= 2^{-p/2} e^{-pt/2} \langle |x|^p, \tilde{L}_N(e^t-1) \rangle, ~\forall t>0,
\end{align*}
where $L_N(t)$  and $\tilde{L}_N(t)$ are the empirical measures of $\{\lambda_i^N(t)\}$  and $\{\tilde{\lambda}_i^N(t)\}$, respectively. Note that $\tilde{\lambda}_i^N(0) = \sqrt{2} \lambda_i^N(0)$ satisfies  condition \eqref{eq-bound-initial} in Theorem \ref{CLT for Dyson} with the constants $a$ and $b$ replaced by $2a$ and $\sqrt{2}b$. By the estimation \eqref{eq3.29}, for all $p \ge 1$ and $N \ge \alpha p$ for some positive constant $\alpha$, we have
\begin{align}
	\mathbb{E} \left[ \sup_{t \in [0,T]} \langle |x|^p, L_N(t) \rangle \right]
	&\le 2^{-p/2} \mathbb{E} \left[ \sup_{t \in [0,e^T-1]} \langle |x|^p, \tilde{L}_N(t) \rangle \right]\notag \\
	&\le 2^{-p/2} C(2a,\sqrt{2}b,e^T-1)^p\notag \\
	&= C'(a,b,T)^p,\label{eq-3.45}
\end{align}
where $C'(a,b,T)$ is positive constant depending only on $(a,b,T)$.

Thus,  by  Lemma \ref{largest eigen uniform bound} and  Corollary \ref{Coro-Lp},  $Q_t^N(x^n)$ defined by \eqref{eq-Q} converges in distribution to a centered Gaussian family $\{G_t(x^n), t\in[0,T]\}_{n\in \mathbb N}$ with covariance given by \eqref{eq3.28}. Similar to \eqref{eq-3.16}, for $n \ge -1$, we have
\begin{align*}
	Q_t^N(x^{n+2})
	=& \mathcal{L}_t^N(x^{n+2}) - \mathcal{L}_0^N(x^{n+2}) + \dfrac{n+2}{2} \int_{0}^t \mathcal{L}_s^N(x^{n+2}) ds - \dfrac{(n+2)(n+1)}{4} \int_{0}^t \langle x^n, \mu_s \rangle ds \notag\\
	&- \dfrac{n+2}{2} \sum_{k=0}^n \int_0^t \mathcal{L}_s^N (x^{n-k}) \mu_s(x^k) ds - \dfrac{(n+2)}{4N} \sum_{k=0}^n \int_{0}^t \mathcal{L}_s^N (x^{n-k}) \mathcal{L}_s^N (x^k) ds.
\end{align*}
 Letting $N\to \infty$, we have
\begin{align*}
	G_t(x^{n+2})
	\overset d=& \mathcal{L}_t(x^{n+2}) - \mathcal{L}_0(x^{n+2}) + \dfrac{n+2}{2} \int_{0}^t \mathcal{L}_s(x^{n+2}) ds - \dfrac{(n+2)(n+1)}{4} \int_{0}^t \langle x^n, \mu_s \rangle ds \notag\\
	&- \dfrac{n+2}{2} \sum_{k=0}^n \int_0^t \mathcal{L}_s (x^{n-k}) \mu_s(x^k) ds\\
	=&\mathcal{L}_t(x^{n+2}) - \mathcal{L}_0(x^{n+2}) + \dfrac{n+2}{2} \int_{0}^t \mathcal{L}_s(x^{n+2}) ds-R_t(n)
\end{align*}
where $R_t(n)$ is given in \eqref{eq-rtn}. Without loss of generality, we may replace ``$\overset d =$" by ``$=$" in the above equation. Thus we have 
\begin{align*}
	\mathcal{L}_t(x^{n+2}) + \dfrac{n+2}{2} \int_{0}^t \mathcal{L}_s(x^{n+2}) ds = \L_0(x^{n+2})+G_t(x^{n+2})+R_t(n),
\end{align*}
whose solution is given by \eqref{OU limit equation}. 

The proof is concluded.
\end{proof}

Now we extend the result of Theorem \ref{CLT for OU} to a generalized system of \eqref{OU eigenvalue SDE}.

\begin{corollary} \label{general OU}
Consider the following SDEs
\begin{align} \label{OU SDE with drift}
	d \lambda_i^N(t)
	= \dfrac{1}{\sqrt{N}} dW_i(t) + \left( b_N(\lambda_i^N(t)) + \dfrac{1}{2N} \sum_{j:j \neq i} \dfrac{1}{\lambda_i^N(t) - \lambda_j^N(t)} \right) dt, ~ 1\le i \le N, ~ t\ge 0,
\end{align}
where $b_N(x)$ satisfies, for some constant $c \in \mathbb{R}$,
\begin{align} \label{con-gen-OU-b_N}
	\lim_{N \rightarrow \infty} N \left\| b_N(x) + \dfrac{1}{2}x - c \right\|_{L^{\infty}(\mathbb{R})} = 0.
\end{align}
Furthermore, assume the same initial conditions as in Theorem \ref{CLT for OU}. Then the conclusion of Theorem \ref{CLT for OU} still holds with $R_t(n)$ in \eqref{eq-rtn} replaced by
\begin{align*}
	R_t(n)
	&= c(n+2) \int_0^t \L_s^N(x^{n+1}) ds + \dfrac{(n+2)(n+1)}{4} \int_{0}^t \langle x^n, \mu_s \rangle ds \\
	&+ \dfrac{n+2}{2} \sum_{k=0}^n \int_0^t \mathcal{L}_s (x^{n-k}) \mu_s(x^k) ds.
\end{align*}
\end{corollary}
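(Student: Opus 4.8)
The plan is to follow the pattern of Corollaries~\ref{general wishart} and~\ref{general Dyson}: use the comparison principle to squeeze the solution of~\eqref{OU SDE with drift} between two copies of the genuine Ornstein--Uhlenbeck eigenvalue system~\eqref{OU eigenvalue SDE}, transport the moment bound~\eqref{eq-3.45} already established for the latter, and then repeat the argument of Theorem~\ref{CLT for OU} while tracking the additional constant $c$. First, set $c_1=c-1$ and $c_2=c+1$; by~\eqref{con-gen-OU-b_N} there is $N_0$ such that $-\frac12 x+c_1\le b_N(x)\le -\frac12 x+c_2$ for all $x\in\R$ and $N\ge N_0$, and we may assume $N_0=1$. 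Let $x^N=(x_i^N)$ and $y^N=(y_i^N)$ be the non-exploding, non-colliding strong solutions of~\eqref{OU SDE with drift} with $b_N$ replaced by $-\frac12 x+c_1$ and $-\frac12 x+c_2$ respectively, driven by the same $W_i$ and started from $x_i^N(0)=y_i^N(0)=\lambda_i^N(0)$. Since here $g_Nh_N\equiv(2\sqrt N)^{-1}$ is constant (hence Lipschitz, for instance with $\rho(u)=u$) and $G_N(x,y)\equiv(2N)^{-1}$ trivially satisfies~\eqref{compare-eigenvalue force condition}, Corollary~\ref{Thm-compare of eigenvalue} gives $x_i^N(t)\le\lambda_i^N(t)\le y_i^N(t)$ for all $t\ge0$ and $1\le i\le N$, almost surely.

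The key observation is that the shifted processes $z_i^N:=x_i^N-2c_1$ and $w_i^N:=y_i^N-2c_2$ solve the pure OU eigenvalue SDE~\eqref{OU eigenvalue SDE}, since $-\frac12(u+2c_j)+c_j=-\frac12 u$; moreover their initial data $\lambda_i^N(0)-2c_j$ still obey the hypothesis~\eqref{eq-bound-initial} of Theorem~\ref{CLT for OU} with $b$ replaced by $b+2|c_j|$. Hence the estimate~\eqref{eq-3.45} proved in Theorem~\ref{CLT for OU} applies to $z^N$ and $w^N$, and combining it with $|\lambda_i^N(t)|\le|z_i^N(t)|+|w_i^N(t)|+2|c_1|+2|c_2|$ gives, for every $T<\infty$,
\[
  \E\Big[\sup_{t\in[0,T]}\langle|x|^p,L_N(t)\rangle\Big]\le C(a,b,c,T)^p
\]
for all $p\ge1$ and all $N\ge\alpha p$, with $\alpha>0$ and $C(a,b,c,T)$ depending only on the data. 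Together with the uniform convergence $b_N(x)\to-\frac12 x+c$ implied by~\eqref{con-gen-OU-b_N}, this also yields, as in Corollaries~\ref{general wishart} and~\ref{general Dyson} (via the conditions of~\cite{Song2019}), the tightness of $\{L_N(t),t\in[0,T]\}$ and its weak convergence to a limit $\{\mu_t,t\in[0,T]\}$.

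With the moment bound in hand, Corollary~\ref{Coro-Lp} applies ($G(x,x)\equiv\frac12$ has polynomial growth), so $Q_t^N(x^n)-NM_{x^n}^N\to0$ in $L^p$ uniformly in $t$ for every $p\ge1$, and $Q_t^N(x^n)$ converges in distribution to the centered Gaussian family with covariance~\eqref{eq3.28} (which is unchanged, as it involves only $G(x,x)$). The only discrepancy with the proof of Theorem~\ref{CLT for OU} is that the uniform limit of $b_N$ is now $b(x)=-\frac12 x+c$: in the expansion of $Q_t^N(x^{n+2})$ coming from~\eqref{eq-Q} with $f=x^{n+2}$, the drift term $\mathcal{L}_s^N(f'b)$ equals $-\frac{n+2}{2}\mathcal{L}_s^N(x^{n+2})+c(n+2)\mathcal{L}_s^N(x^{n+1})$, which produces an extra summand $-c(n+2)\int_0^t\mathcal{L}_s^N(x^{n+1})\,ds$. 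Running the same induction on $n$---bounding the martingale terms as in~\eqref{Wishart-martingale uniform norm}, using the moment bound above and the hypothesis $\sup_N\E[|N(\langle x^n,L_N(0)\rangle-\langle x^n,\mu_0\rangle)|^p]<\infty$---one obtains $\sup_N\E[\sup_{t\in[0,T]}|\mathcal{L}_t^N(x^n)|^q]<\infty$, the $O(N^{-1})$ bilinear term is negligible in $L^q$ and a.s., and $(\mathcal{L}_t^N(x^k))_{0\le k\le n+1}$ converges in distribution. Passing to the limit gives the linear integral equation
\[
  \L_t(x^{n+2})+\frac{n+2}{2}\int_0^t\L_s(x^{n+2})\,ds=\L_0(x^{n+2})+G_t(x^{n+2})+R_t(n),
\]
with $R_t(n)$ now carrying the extra term $c(n+2)\int_0^t\L_s(x^{n+1})\,ds$ (a lower-order quantity, already determined by the induction); solving it by variation of constants, exactly as in Theorem~\ref{CLT for OU}, reproduces~\eqref{OU limit equation} with the modified $R_t(n)$.

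Essentially all the work lies in the first two steps, and the only genuinely new point relative to Theorem~\ref{CLT for OU} is the observation that a constant shift by $2c_j$ turns each comparison system into an exact copy of the pure OU eigenvalue SDE, so that~\eqref{eq-3.45} can be recycled---it then only remains to check that the shifted initial conditions still satisfy~\eqref{eq-bound-initial}, which they do. The third step is bookkeeping; note that the new term $c(n+2)\int_0^t\L_s(x^{n+1})\,ds$ enters $R_t(n)$ only for $n\ge0$, while for $n=-1$ it equals $c\int_0^t\L_s(1)\,ds=0$, so the formula for $\L_t(x)$ is unchanged, consistently with the statement. (As in Corollaries~\ref{general wishart} and~\ref{general Dyson}, the existence of a non-exploding, non-colliding strong solution of~\eqref{OU SDE with drift} is assumed implicitly, and holds under mild conditions on $b_N$ by~\cite{Graczyk2013}.)
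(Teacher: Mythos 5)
Your proof is correct and follows essentially the same route as the paper: squeeze the solution between shifted drift systems via the comparison principle of Corollary~\ref{Thm-compare of eigenvalue}, recognize that the constant shifts $x_i^N - 2c_1$ and $y_i^N - 2c_2$ reduce the comparison systems to the pure OU eigenvalue SDE so that the moment bound~\eqref{eq-3.45} can be recycled, then invoke Corollary~\ref{Coro-Lp} and track the extra drift term $c(n+2)\int_0^t \mathcal{L}_s(x^{n+1})\,ds$ in $R_t(n)$. Your explicit check that the shifted initial data still satisfy the hypothesis~\eqref{eq-bound-initial} (with $b$ adjusted by $2|c_j|$) and that the new term vanishes for $n=-1$ are details the paper leaves implicit, but the argument is the same.
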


\begin{proof}
The proof is similar to the proof of Corollary \ref{general Dyson}, which is sketched below.

By \eqref{con-gen-OU-b_N}, without loss of generality, we assume
\begin{align*}
	-\dfrac{1}{2} x + c - 1 \le b_N(x) \le -\dfrac{1}{2} x + c + 1,
\end{align*}
for all $N \ge 1$. Then we have
\begin{align}\label{eq-3.48}
	\mathbb{P} \left( x_i^N(t) \le \lambda_i^N(t) \le y_i^N(t), \ \forall 1 \le i \le N, \ \forall t > 0 \right) = 1,
\end{align}
where  the processes $(x_i^N(t))_{1 \le i \le N}$ and $(y_i^N(t))_{1 \le i \le N}$ are the solutions of the following systems of SDEs respectively:
\begin{align*}
	d x_i^N(t)
	= \dfrac{1}{\sqrt{N}} dW_i(t) + \left( - \dfrac{1}{2} x_i^N(t) + c - 1 + \dfrac{1}{2N} \sum_{j:j \neq i} \dfrac{1}{x_i^N(t) - x_j^N(t)} \right) dt, ~ 1\le i \le N, ~ t\ge 0,
\end{align*}
and
\begin{align*}
	d y_i^N(t)
	= \dfrac{1}{\sqrt{N}} dW_i(t) + \left( - \dfrac{1}{2} y_i^N(t) + c + 1 + \dfrac{1}{2N} \sum_{j:j \neq i} \dfrac{1}{y_i^N(t) - y_j^N(t)} \right) dt, ~ 1\le i \le N, ~ t\ge 0,
\end{align*}
with the initial conditions $x_i^N(0) = y_i^N(0) = \lambda_i^N(0)$ for $1 \le i \le N$. Noting that $(x_i^N(t)-2c+2)_{1 \le i \le N}$ and $(y_i^N(t)-2c-2)_{1 \le i \le N}$ solve the SDEs \eqref{OU eigenvalue SDE}, by \eqref{eq-3.45} and \eqref{eq-3.48}, we get that the uniform $L^p$ bound \eqref{eq-p-bound} holds for system \eqref{OU SDE with drift}.

Then applying Corollary \ref{Coro-Lp} and following the approach in the proof of Theorem \ref{CLT for Wishart}, we get the desired result.
\end{proof}

\section{Useful lemmas}\label{sec-appendix}

In this section, we provide some results that were used in the preceding sections.

The following CLT for martingales was used in the proof of Theorem \ref{Thm-Wishart}.

\begin{lemma}[Rebolledo's Theorem] \label{Rebolledo Thm}
Let $n \in \mathbb{N}$, and let $\{M_N\}_{N \in \mathbb{N}}$ be a sequence of continuous centered martingales with values in $\mathbb{R}^n$. If the quadratic variation $\langle M_N \rangle_t$ converges in $L^1(\Omega)$ to a continuous deterministic function $\phi(t)$ for all $t>0$, then for any $T > 0$, as a continuous process from $[0,T]$ to $\mathbb{R}^n$, $(M_N(t), t \in [0,T])$ converges in law to a Gaussian process $G$ with mean 0 and covariance
\begin{align*}
	\mathbb{E} [G(s) G(t)^\intercal] = \phi(t \wedge s).
\end{align*}
\end{lemma}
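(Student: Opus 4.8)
The plan is to run the usual two-step argument for a functional martingale central limit theorem: prove convergence of the finite-dimensional distributions, and prove tightness in $C([0,T],\mathbb{R}^n)$. Throughout I would use the following upgrade of the hypothesis. For each fixed vector $\theta$ the scalar process $t\mapsto\theta^\intercal\langle M_N\rangle_t\theta$ is nondecreasing (since $\langle M_N\rangle_t-\langle M_N\rangle_s\succeq0$ for $s\le t$), and by assumption it converges in $L^1$, for each fixed $t$, to the continuous function $t\mapsto\theta^\intercal\phi(t)\theta$; by a Dini-type argument for monotone functions converging to a continuous limit, this convergence is in fact uniform on $[0,T]$ in probability, and applying it to a finite family of $\theta$'s spanning the symmetric matrices gives $\sup_{t\le T}\|\langle M_N\rangle_t-\phi(t)\|\to0$ in probability. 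In particular $\phi(t)-\phi(s)\succeq0$, so $(s,t)\mapsto\phi(t\wedge s)$ is a legitimate covariance kernel and $G$ is well defined.

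For tightness I would combine the automatic tightness of the one-dimensional marginals, which follows from $\mathbb{E}[|M_N(t)|^2]=\mathbb{E}[\operatorname{tr}\langle M_N\rangle_t]$ and the convergence of the right-hand side (so $\sup_N\mathbb{E}[|M_N(t)|^2]<\infty$), with Aldous's oscillation criterion: for any bounded stopping time $\tau_N\le T$ and $\delta>0$, optional stopping applied to the martingale $|M_N(t)|^2-\operatorname{tr}\langle M_N\rangle_t$ gives $\mathbb{E}\big[|M_N(\tau_N+\delta)-M_N(\tau_N)|^2\wedge1\big]\le\mathbb{E}\big[\big(\operatorname{tr}\langle M_N\rangle_{\tau_N+\delta}-\operatorname{tr}\langle M_N\rangle_{\tau_N}\big)\wedge1\big]$, and the right-hand side is bounded by the modulus of continuity of $t\mapsto\operatorname{tr}\langle M_N\rangle_t$, which is small uniformly in $N$ once $\operatorname{tr}\langle M_N\rangle$ is uniformly close to the uniformly continuous deterministic $\operatorname{tr}\phi$. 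This yields $C$-tightness of $\{M_N\}$ on $[0,T]$.

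For the finite-dimensional distributions I would fix $0=t_0<t_1<\dots<t_k\le T$ and vectors $\theta_1,\dots,\theta_k$ and, by the Cram\'er--Wold device, show that $\sum_{j=1}^k\theta_j^\intercal M_N(t_j)$ converges in law to a centered Gaussian with the variance predicted by the kernel $\phi(t\wedge s)$. Writing $\psi_l=\sum_{j\ge l}\theta_j$ and summing by parts, $\sum_j\theta_j^\intercal M_N(t_j)=\sum_l\psi_l^\intercal\big(M_N(t_l)-M_N(t_{l-1})\big)=Z_N(t_k)$, where $Z_N=\int_0^{\cdot}\psi(s)^\intercal dM_N(s)$ with $\psi$ the piecewise-constant integrand equal to $\psi_l$ on $(t_{l-1},t_l]$. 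Then $Z_N$ is a continuous $L^2$ martingale with $\langle Z_N\rangle_t=\sum_l\psi_l^\intercal\big(\langle M_N\rangle_{t\wedge t_l}-\langle M_N\rangle_{t\wedge t_{l-1}}\big)\psi_l$, which by the uniform upgrade above converges uniformly in probability to a deterministic continuous function $v(t)$. By the Dambis--Dubins--Schwarz theorem, $Z_N(t)=W_N(\langle Z_N\rangle_t)$ for a standard Brownian motion $W_N$ (on a suitable enlargement); since $W_N$ has the same law for every $N$ and $\langle Z_N\rangle_{\cdot}$ converges in probability to the deterministic $v$, the pair $(W_N,\langle Z_N\rangle_{\cdot})$ converges in law to $(W,v)$, and by continuity of the composition map $Z_N(t_k)=W_N(\langle Z_N\rangle_{t_k})$ converges in law to $W(v(t_k))\sim\mathcal N(0,v(t_k))$. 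A summation-by-parts identity identifies $v(t_k)=\sum_{i,j}\theta_i^\intercal\phi(t_i\wedge t_j)\theta_j=\mathbb{E}\big[(\sum_j\theta_j^\intercal G(t_j))^2\big]$. Combining $C$-tightness with this convergence of finite-dimensional distributions gives $M_N\to G$ in law in $C([0,T],\mathbb{R}^n)$.

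The step I expect to require the most care is the one where the deterministic limit is brought in: because only $L^1$, hence only in-probability, convergence of the brackets is assumed, one cannot rely on uniform integrability to pass the martingale and bracket relations through a weak limit, which is precisely why the argument is routed through the Dambis--Dubins--Schwarz time change, where the deterministic limit $v$ of $\langle Z_N\rangle$ interacts with the fixed-law Brownian motions $W_N$ only through a continuous map and no extra integrability is needed. The monotone-function upgrade of the bracket convergence to a uniform one, and the verification of Aldous's condition, are comparatively routine.
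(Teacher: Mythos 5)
The paper does not prove this lemma: it is stated in Section~\ref{sec-appendix} as a cited auxiliary result (Rebolledo's martingale central limit theorem) and used as a black box in the proof of Theorem~\ref{Thm-Wishart}. There is therefore no ``paper proof'' to compare against; what you have written is a self-contained argument, and its overall architecture is the standard one for the functional martingale CLT: upgrade the pointwise $L^1$ convergence of the brackets to a uniform-in-probability convergence using monotonicity plus the continuity of $\phi$, verify Aldous's tightness criterion from the bracket modulus, and settle the finite-dimensional distributions via Cram\'er--Wold together with the Dambis--Dubins--Schwarz time change. That routing through DDS is exactly the right way to exploit the assumed $L^1$ convergence of $\langle M_N\rangle$ without invoking any higher moment bounds, and the summation-by-parts bookkeeping identifying the limiting variance with $\sum_{i,j}\theta_i^\intercal\phi(t_i\wedge t_j)\theta_j$ is correct (using $\phi(0)=0$).

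There is one step that does not hold as written. In the Aldous verification you claim that optional stopping applied to $|M_N|^2-\operatorname{tr}\langle M_N\rangle$ gives
$\mathbb{E}\bigl[|M_N(\tau_N+\delta)-M_N(\tau_N)|^2\wedge1\bigr]\le\mathbb{E}\bigl[\bigl(\operatorname{tr}\langle M_N\rangle_{\tau_N+\delta}-\operatorname{tr}\langle M_N\rangle_{\tau_N}\bigr)\wedge1\bigr]$.
This inference is false: from $\mathbb{E}[X]=\mathbb{E}[Y]$ with $X,Y\ge0$ one cannot deduce $\mathbb{E}[X\wedge1]\le\mathbb{E}[Y\wedge1]$ (take $X\equiv2$ and $Y=4\cdot\mathrm{Bernoulli}(1/2)$). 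The patch is easy and keeps the structure of your argument: drop the truncations and use the exact optional-stopping identity
$\mathbb{E}\bigl[|M_N(\tau_N+\delta)-M_N(\tau_N)|^2\bigr]=\mathbb{E}\bigl[\operatorname{tr}\langle M_N\rangle_{\tau_N+\delta}-\operatorname{tr}\langle M_N\rangle_{\tau_N}\bigr]$
together with Markov's inequality. The right-hand side is dominated by $\mathbb{E}\bigl[\sup_{s\le T}\bigl(\operatorname{tr}\langle M_N\rangle_{s+\delta}-\operatorname{tr}\langle M_N\rangle_{s}\bigr)\bigr]$, and this tends, as $N\to\infty$, to $\sup_{s\le T}\bigl(\operatorname{tr}\phi(s+\delta)-\operatorname{tr}\phi(s)\bigr)$ (which is small for small $\delta$ by uniform continuity): the convergence in probability comes from your uniform bracket upgrade, and the passage to expectations is justified because the $L^1$ convergence of $\operatorname{tr}\langle M_N\rangle_{T'}$ at a fixed $T'>T$ gives uniform integrability of the dominating family $\{\operatorname{tr}\langle M_N\rangle_{T'}\}_N$. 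With this correction the proposal is sound.
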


Section \ref{section-comparison} was based on the following comparison principle for multi-dimensional SDEs which is a direct consequence of \cite[Theorem 1.1 and Theorem 1.2]{Geiss1994}.

\begin{lemma} \label{comparison}
On a certain complete probability space equipped with a filtration that satisfies the usual conditions (\cite[Definition 2.25]{Karatzas1998}), consider the following SDEs
\begin{align} \label{Compare-sde}
	\begin{aligned}
	Y(t) = Y(0) + \int_0^t b^{(1)}(s,Y(s)) ds + \int_0^t \sigma(s,Y(s)) dW(s),\\
	Z(t) = Z(0) + \int_0^t b^{(2)}(s,Z(s)) ds + \int_0^t \sigma(s,Z(s)) dW(s),
	\end{aligned}
\end{align}
where $\{W(t), t\ge 0\}$ is a $d$-dimensional Brownian motion.  Assume the solutions to  SDEs \eqref{Compare-sde} are  pathwisely unique and non-exploding. If the following conditions are satisfied, 
\begin{enumerate}
	\item the drift functions $b^{(1)}(t,x)$ and $b^{(2)}(t,x)$ are continuous mappings from $[0,\infty) \times \mathbb{R}^n$ to $\mathbb{R}^n$. Besides, they are quasi-monotonously increasing in the sense that for $1 \le i \le n$ and $j = 1,2$, $b_i^{(j)}(t,x) \le b_i^{(j)}(t,y)$, whenever $x_i = y_i$ and $x_l \le y_l$ for $l \in \{1, \ldots, n \} \setminus \{i\}$;
	\item the dispersion matrix $\sigma(t,x)$ is a continuous mapping from $[0,\infty) \times \mathbb{R}^n$ to $\mathbb{R}^{n \times d}$ that satisfies the following condition
	\begin{align*}
		\sum_{j=1}^d |\sigma_{ij}(t,x) - \sigma_{ij}(t,y)| \le \rho(|x_i - y_i|)
	\end{align*}
	for all $t \ge 0$ and $x = (x_1, \ldots, x_n)^\intercal, y = (y_1, \ldots, y_n)^\intercal \in \mathbb{R}^n$, where $\rho: [0,\infty) \rightarrow [0,\infty)$ is a strictly increasing function with $\rho(0) = 0$ and
	\begin{align*}
		\int_{0+} \rho^{-2}(u) du = \infty;
	\end{align*}
	\item $b_i^{(1)}(t,x) \le b_i^{(2)}(t,x)$ for all $1 \le i \le n$, $t \ge 0$, $x \in \mathbb{R}^n$;
	\item for $1 \le i \le n$, $Y_i(0) \le Z_i(0)$ almost surely,
\end{enumerate}
then we have
\begin{align*}
	\mathbb{P} \left( Y_i(t) \le Z_i(t), \forall t \ge 0, 1 \le i \le n \right) = 1.
\end{align*}
\end{lemma}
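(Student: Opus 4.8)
The plan is to deduce Lemma~\ref{comparison} from the two building-block comparison theorems of Geiss and Manthey, \cite[Theorem 1.1 and Theorem 1.2]{Geiss1994}: one compares two solutions of the \emph{same} SDE with ordered initial data, the other compares two solutions driven by the same Brownian motion and the same dispersion matrix but with ordered drifts started from the \emph{same} point. To combine them, I would introduce the auxiliary process $\widetilde Z$ solving $d\widetilde Z(t)=b^{(2)}(t,\widetilde Z(t))\,dt+\sigma(t,\widetilde Z(t))\,dW(t)$ with $\widetilde Z(0)=Y(0)$, well defined by the standing pathwise-uniqueness and non-explosion hypotheses. Applying the ordered-drift comparison to the pair $(Y,\widetilde Z)$ — same initial value $Y(0)$, drifts $b^{(1)}\le b^{(2)}$ (condition (3)), common dispersion $\sigma$, both drifts quasi-monotone (condition (1)) — gives $Y_i(t)\le\widetilde Z_i(t)$ for all $t\ge0$, $1\le i\le n$, a.s. Applying the ordered-initial-data comparison to $(\widetilde Z,Z)$ — same equation (drift $b^{(2)}$, dispersion $\sigma$), initial data $\widetilde Z(0)=Y(0)\le Z(0)$ componentwise (condition (4)) — gives $\widetilde Z_i(t)\le Z_i(t)$. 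Chaining the two inequalities yields the claim.

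It then remains only to check that hypotheses (1)--(4), together with pathwise uniqueness and non-explosion, are precisely what \cite{Geiss1994} requires: continuity and quasi-monotonicity of $b^{(1)},b^{(2)}$, and the coordinatewise Yamada--Watanabe modulus bound $\sum_j|\sigma_{ij}(t,x)-\sigma_{ij}(t,y)|\le\rho(|x_i-y_i|)$ with $\rho$ strictly increasing, $\rho(0)=0$, $\int_{0+}\rho^{-2}(u)\,du=\infty$. I would also note that taking $x_i=y_i$ in this bound forces each $\sigma_{ij}(t,\cdot)$ to depend on the $i$-th coordinate only, which is the structural feature that lets the Yamada--Watanabe argument be run coordinate by coordinate.

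For completeness one can run the underlying self-contained argument. Fix Yamada--Watanabe approximants: choose $1=a_0>a_1>\cdots\downarrow0$ with $\int_{a_k}^{a_{k-1}}\rho^{-2}(u)\,du=k$, continuous $\rho_k$ supported in $(a_k,a_{k-1})$ with $\int_{a_k}^{a_{k-1}}\rho_k(u)\,du=1$ and $0\le\rho_k(u)\le\frac{2}{k\,\rho^2(u)}$, and set $\phi_k(x)=\int_0^{x^+}\!\!\int_0^y\rho_k(u)\,du\,dy$, so that $\phi_k\uparrow x^+$, $\phi_k\in C^2$, $0\le\phi_k'\le\mathbf 1_{(0,\infty)}$ and $\phi_k''=\rho_k\mathbf 1_{(0,\infty)}$. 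With $D_i:=Y_i-Z_i$ and a localizing stopping time $\tau_R$ (finite by non-explosion, $\tau_R\uparrow\infty$), apply It\^o's formula to $\sum_i\phi_k(D_i(t\wedge\tau_R))$ and take expectations. The stochastic integral term vanishes; the initial term is $0$ since $\phi_k$ vanishes on $(-\infty,0]$ and $D_i(0)\le0$ a.s.; and the second-order term is bounded by $\tfrac12\sum_i\E\int_0^{t\wedge\tau_R}\rho_k(D_i(s))\,\rho(D_i(s))^2\,ds\le n\,t/k\to0$ as $k\to\infty$, using condition (2) and the bound on $\rho_k$.

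The main obstacle is the drift term $\E\int_0^{t\wedge\tau_R}\sum_i\phi_k'(D_i(s))(b_i^{(1)}(s,Y(s))-b_i^{(2)}(s,Z(s)))\,ds$: on $\{D_i(s)>0\}$ one must show it is asymptotically non-positive, but since the drifts are only assumed continuous one cannot directly bridge from $Y_i$ to $Z_i$ in the $i$-th slot. This is exactly why the two-step reduction above is the clean route: in each of the two applications one of the two differences — initial data, or drift — is identically zero, so quasi-monotonicity applies without leaving the diagonal in the $i$-th coordinate. Once the drift term is handled, letting $k\to\infty$ and then $R\to\infty$ gives $\E[\sum_i D_i(t)^+]=0$, hence $Y_i(t)\le Z_i(t)$ a.s. for each fixed $t$; pathwise continuity of $Y-Z$ then upgrades this to a statement holding a.s. simultaneously for all $t\ge0$.
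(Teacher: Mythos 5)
Your primary route — reduce to the two comparison theorems of Geiss and Manthey via an intermediate process $\widetilde Z$ — is in the same spirit as the paper's own proof, which does nothing more than cite Theorems 1.1 and 1.2 of Geiss--Manthey and declare the lemma a direct consequence. Your observation that setting $x_i=y_i$ in the modulus bound forces each $\sigma_{ij}(t,\cdot)$ to depend only on the $i$-th coordinate is correct and is indeed the structural feature exploited there. One small point you should flag: the auxiliary process $\widetilde Z$ solves the $b^{(2)}$-SDE started from $Y(0)$, and the lemma's hypotheses guarantee pathwise uniqueness and non-explosion only for the two given SDEs with their given initial data; well-posedness of this third SDE is a (mild) extra assumption you are silently invoking.

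The genuine problem is in the ``for completeness'' self-contained sketch and, more pointedly, in the stated rationale that the two-step reduction makes the drift term tractable because ``one of the two differences --- initial data, or drift --- is identically zero, so quasi-monotonicity applies without leaving the diagonal.'' This is not correct. In the step $Y$ vs.\ $\widetilde Z$, the It\^o drift term is $\int_0^{t\wedge\tau_R}\phi_k'(D_i(s))\bigl(b_i^{(1)}(s,Y(s))-b_i^{(2)}(s,\widetilde Z(s))\bigr)\,ds$, and for $s>0$ one has $Y(s)\neq\widetilde Z(s)$ in every coordinate even though $Y(0)=\widetilde Z(0)$. Quasi-monotonicity only compares $b_i^{(j)}(t,x)$ and $b_i^{(j)}(t,y)$ when $x_i=y_i$, which fails on $\{D_i(s)>0\}$; condition (3) lets you replace $b^{(1)}$ by $b^{(2)}$ in the first slot, but you are then left with $b_i^{(2)}(s,Y(s))-b_i^{(2)}(s,\widetilde Z(s))$, which mere continuity does not control. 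The identical obstruction recurs in the step $\widetilde Z$ vs.\ $Z$, where the drifts coincide but the arguments do not. So neither sub-case is structurally easier than the original, and the naive Yamada--Watanabe estimate as you sketch it does not close in either one. What actually closes it in Geiss--Manthey is a first-crossing-time localization combined with quasi-monotonicity and the pathwise-uniqueness hypothesis (equivalently, an $\varepsilon$-perturbation of the larger drift and a passage to the limit). If you want a self-contained proof you need to bring in that mechanism; otherwise the honest argument is precisely the citation the paper gives, and the completeness sketch should be dropped or repaired.
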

The following lemma was employed in the proof of Proposition \ref{largest eigenvalue bound}.

\begin{lemma} \label{Lemma-stationary distribution}
Let $u^N(t)$ be the strong solution to \eqref{self-similarity sde}. If $u^N(t)$ is distributed according to $\mathbb{P}^N$ in \eqref{density of Wishart eigenvalue}, then for $f \in C_b^2(\mathbb{R}^N)$,
\begin{align*}
	\dfrac{d}{dt} \mathbb{E} [f(u^N(t))] = 0.
\end{align*}
\end{lemma}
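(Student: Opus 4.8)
The plan is to identify the (time-inhomogeneous) generator of \eqref{self-similarity sde}, to recognize that up to the common factor $(t+a)^{-1}$ it is the generator of a diffusion that is \emph{reversible} with respect to $\mathbb P^N$, and then to conclude by integrating by parts against the density \eqref{density of Wishart eigenvalue}. (Equivalently, \eqref{self-similarity sde} is the deterministic time change $s=\log\frac{t+a}{a}$ of the autonomous diffusion with generator $\mathscr L$ below, for which $\mathbb P^N$ is invariant.)

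First I would apply It\^o's formula to $f(u^N(t))$ for $f\in C_b^2(\mathbb{R}^N)$, getting $f(u^N(t))=f(u^N(0))+\int_0^t(s+a)^{-1}\mathscr L f(u^N(s))\,ds+M_t$, where $M$ is a local martingale, $\partial_i$ denotes $\partial/\partial x_i$, and
$$\mathscr L g(x)=\frac2N\sum_{i=1}^N x_i\,\partial_i^2 g(x)+\sum_{i=1}^N\Big(\frac PN-x_i+\frac1N\sum_{j\neq i}\frac{x_i+x_j}{x_i-x_j}\Big)\partial_i g(x).$$
Because $u^N$ is non-colliding and, by hypothesis, $u^N(t)\sim\mathbb P^N$ has finite moments of all orders (the density \eqref{density of Wishart eigenvalue} decays exponentially, and the singular drift is integrable against it since $p$ vanishes linearly on $\{x_i=x_j\}$), $M$ is a true martingale near $t$ and $s\mapsto\mathbb E[(s+a)^{-1}\mathscr L f(u^N(s))]$ is continuous, so that
$$\frac{d}{dt}\mathbb E[f(u^N(t))]=\frac1{t+a}\,\mathbb E[\mathscr L f(u^N(t))]=\frac1{t+a}\int_{\Delta_N}\mathscr L f(x)\,p(x)\,dx.$$

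The key computation is the divergence-form identity
$$\mathscr L f(x)=\frac1{p(x)}\sum_{i=1}^N\partial_i\!\Big(\frac{2x_i}{N}\,p(x)\,\partial_i f(x)\Big),$$
which reduces to checking $\frac2N+\frac{2x_i}{N}\partial_i\log p(x)=\frac PN-x_i+\frac1N\sum_{j\neq i}\frac{x_i+x_j}{x_i-x_j}$. This follows from $\partial_i\log p(x)=-\frac N2+\frac{P-N-1}{2x_i}+\sum_{j\neq i}\frac1{x_i-x_j}$, read off from \eqref{density of Wishart eigenvalue}, together with the elementary identity $\frac1N\sum_{j\neq i}\frac{x_i+x_j}{x_i-x_j}=\frac{2x_i}{N}\sum_{j\neq i}\frac1{x_i-x_j}-\frac{N-1}{N}$. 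Granting it, integration by parts over $\Delta_N$ yields $\int_{\Delta_N}\mathscr L f\cdot p\,dx=\sum_{i=1}^N\int_{\Delta_N}\partial_i\big(\frac{2x_i}{N}p\,\partial_i f\big)\,dx=0$, and the lemma follows.

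The main obstacle I anticipate is the rigorous justification of this last integration by parts (and of exchanging $\frac{d}{dt}$ with $\mathbb E$). The boundary of $\Delta_N$ splits into the walls $\{x_i=x_{i+1}\}$, where the Vandermonde factor in $p$ vanishes linearly — this both kills the boundary flux $\frac{2x_i}{N}p\,\partial_i f$ and makes $\mathscr L f\cdot p$ locally bounded, so $\mathscr L f\cdot p\in L^1(\Delta_N)$; the hard wall $\{x_1=0\}$, where $\frac{2x_1}{N}p(x)\asymp x_1^{(P-N+1)/2}\to0$ since $P>N-1$; and the part at infinity, handled by the exponential decay of $p$ together with the boundedness of $f$ and its first two derivatives. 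A convenient way to organize the argument is to first establish the symmetric Dirichlet-form identity $\int_{\Delta_N}(\mathscr L f)g\,p\,dx=-\frac2N\sum_{i}\int_{\Delta_N}x_i(\partial_i f)(\partial_i g)\,p\,dx$ for smooth $f,g$ compactly supported in $\Delta_N$, and then extend it to $f\in C_b^2$ and $g\equiv1$ by a cutoff approximation, using the $\mathbb P^N$-moment bounds to control the remainders.
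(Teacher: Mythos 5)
Your proposal is correct and follows essentially the same route as the paper: apply It\^o's formula to identify the (time-rescaled) generator $\mathscr L$, reduce the claim to $\int_{\Delta_N}\mathscr Lf\cdot p\,dx=0$, and verify this by integration by parts using the explicit formula $\partial_i\log p(x)=-\tfrac N2+\tfrac{P-N-1}{2x_i}+\sum_{j\neq i}\tfrac1{x_i-x_j}$. The only difference is one of packaging: you recognize the divergence form $\mathscr Lf=p^{-1}\sum_i\partial_i(\tfrac{2x_i}{N}p\,\partial_i f)$ (equivalently, $\mathbb P^N$-reversibility) up front, while the paper integrates by parts on the $\partial_i^2 f$ term and cancels directly; the underlying algebraic identity is the same.
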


\begin{proof}
For $f \in C_b^2(\mathbb{R}^N)$, applying It\^o's formula to \eqref{self-similarity sde}, we have
\begin{align*}
	f(u^N(t))
	&= f(u^N(t_0)) + \sum_{i=1}^N \int_{0}^t \partial_i f(u^N(s)) \cdot 2 \dfrac{\sqrt{u_i^N(s)}}{\sqrt{N(s+a)}} dW_i(s) \\
	&\quad + \sum_{i=1}^N \int_{0}^t \partial_i f(u^N(s)) \cdot \dfrac{1}{s+a} \left( \dfrac{P}{N} - u_i^N(s) + \dfrac{1}{N} \sum_{j:j \neq i} \dfrac{u_i^N(s) + u_j^N(s)}{u_i^N(s) - u_j^N(s)} \right) ds \\
	&\quad + \dfrac{1}{2} \sum_{i=1}^N \int_{0}^t \partial_i^2 f(u^N(s)) \cdot 4 \dfrac{u_i^N(s)}{N(s+a)} ds.
\end{align*}
Here, $\partial_i$ is the partial derivative with respect to the $i$-th component $x_i$. Therefore, for $t\ge 0$,
\begin{align*}
	\frac{d}{dt} \mathbb{E} \left[ f(u^N(t)) \right]
	&= \mathbb{E} \left[ \dfrac{1}{t+a} \sum_{i=1}^N \partial_i f(u^N(t)) \cdot \left( \dfrac{P}{N} - u_i^N(t) \right) \right] \\
	&\quad + \mathbb{E} \left[ \dfrac{1}{N(t+a)} \sum_{i \neq j} \partial_i f(u^N(t)) \cdot \dfrac{u_i^N(t) + u_j^N(t)}{u_i^N(t) - u_j^N(t)} \right] \\
	&\quad + \mathbb{E} \left[ \dfrac{2}{N(t+a)} \sum_{i=1}^N \partial_i^2 f(u^N(t)) u_i^N(t) \right].
\end{align*}
Thus it suffices to show, with the density function $p(x)$ in \eqref{density of Wishart eigenvalue},
\begin{align} \label{stationary}
	&\sum_{i=1}^N \int_{\Delta_N} \partial_i f(x) \cdot \left( \dfrac{P}{N} - x_i \right) p(x) dx
	+ \dfrac{1}{N} \sum_{i \neq j} \int_{\Delta_N} \partial_i f(x) \cdot \dfrac{x_i + x_j}{x_i - x_j} p(x) dx \nonumber \\
	&+ \dfrac{2}{N} \sum_{i=1}^N \int_{\Delta_N} \partial_i^2 f(x) x_i p(x) dx = 0,
\end{align}
where $\Delta_N = \{x \in \mathbb{R}^N: 0 < x_1 < \ldots < x_N \}$ is the support of $\mathbb{P}^N$. Noting that $p(x)$ vanishes on $\partial \Delta_N$, we have by the integration by parts formula, 
\begin{align*}
	\int_{\Delta_N} \partial_i^2 f(x) x_i p(x) dx
	&= \int_{\partial \Delta_N} \partial_i f(x) x_i p(x) dS - \int_{\Delta_N} \partial_i f(x) \partial_i \left( x_i p(x) \right) dx \\
	&= - \int_{\Delta_N} \partial_i f(x) \left( p(x) + x_i \partial_i p(x) \right) dx.
\end{align*}
Hence, to show \eqref{stationary}, it is sufficient to verify
\begin{align*}
	\sum_{i=1}^N \left( \dfrac{P}{N} - x_i \right) p(x) + \dfrac{1}{N} \sum_{i \neq j} \dfrac{x_i + x_j}{x_i - x_j} p(x) - \dfrac{2}{N} \sum_{i=1}^N \left( p(x) + x_i \partial_i p(x) \right) = 0.
\end{align*}
By the chain rule,
\begin{align*}
	\partial_i p(x) = - \dfrac{N}{2} p(x) + \dfrac{P-N-1}{2} \dfrac{1}{x_i} p(x) + \sum_{j:j \neq i} \dfrac{1}{x_i - x_j} p(x).
\end{align*}
Hence,
\begin{align*}
	\dfrac{2}{N} \sum_{i=1}^N x_i \partial_i p(x)
	&= - \sum_{i=1}^N x_i p(x) + (P-N-1) p(x) + \dfrac{2}{N} \sum_{i \neq j} \dfrac{x_i}{x_i - x_j} p(x) \\
	&= - \sum_{i=1}^N x_i p(x) + (P-N-1) p(x) + \dfrac{1}{N} \sum_{i \neq j} \left( \dfrac{x_i + x_j}{x_i - x_j} + 1 \right) p(x) \\
	&= - \sum_{i=1}^N x_i p(x) + (P-2) p(x) + \dfrac{1}{N} \sum_{i \neq j} \dfrac{x_i + x_j}{x_i - x_j} p(x),
\end{align*}
which gives the desired result.
\end{proof}

\bibliography{WishartProcess}

\end{document}